 \numberwithin{equation}{section}
\theoremstyle{plain}
\newtheorem{theorem}{Theorem}[section]
\newtheorem{lemma}[theorem]{Lemma}
\newtheorem{proposition}[theorem]{Proposition}
\newtheorem{corollary}[theorem]{Corollary}
\newtheorem{example}[theorem]{Example}
\newtheorem{definition}[theorem]{Definition}
\newtheorem{remark}[theorem]{Remark}
\def\B{{\mathcal B}}
\def\E{{\mathbb E}}
\def\N{{\mathbb N}}
\def\R {{\mathbb R}}
\def\Z{{\mathbb Z}}
\def\D{{\bf D}}
\def\x{{\bf x}}
\def\P{{\mathbb P}}
\def\L{{\mathcal L}}
\def\e{{\varepsilon}}
\def\CP{{\mathcal P}}
\def\CS{{\mathcal S}}
\def\CQ{{\mathcal Q}}
\def\bS{{\bf S}}
\def\bA{{\bf A}}
\def\bE{{\bf E}}
\def\bF{{\bf F}}
\DeclareMathOperator{\diam}{diam}
\DeclareMathOperator{\dimH}{dim_H}
\DeclareMathOperator{\dimp}{dim_P}
\DeclareMathOperator{\dimbu}{\overline{dim}_B}
\DeclareMathOperator{\dist}{dist}
\begin{document}
\baselineskip 15pt

\bigskip
\baselineskip 17pt
\title[]{Dimensions of random covering sets in Riemann manifolds}

\date{}
\author[D.-J. Feng]{De-Jun Feng$^1$}
\address{Department of Mathematics, Lady Shaw Building, The Chinese University
of Hong Kong, Shatin, N. T., Hong Kong$^1$}
\email{djfeng@math.cuhk.edu.hk$^1$}
\author[E. J\"arvenp\"a\"a]{Esa J\"arvenp\"a\"a$^2$}
\address{Department of Mathematical Sciences, P.O. Box 3000,
         90014 University of Oulu, Finland$^{2,3,4}$}
\email{esa.jarvenpaa@oulu.fi$^2$}

\author[M. J\"arvenp\"a\"a]{Maarit J\"arvenp\"a\"a$^3$}
\email{maarit.jarvenpaa@oulu.fi$^3$}

\author[V. Suomala]{Ville Suomala$^4$}
\email{ville.suomala@oulu.fi$^4$}

\subjclass[2010]{60D05, 28A80}
\keywords{Random covering set, Hausdorff dimension and packing dimension}

\thanks{We acknowledge the support of RGC grants in the Hong Kong Special
Administrative Region, China (projects CUHK401112, CUHK401013), and the support
of the Centre of Excellence in Analysis and Dynamics Research funded by the
Academy of Finland. We thank Henna Koivusalo and Antti K\"aenm\"aki for
interesting discussions.}

\begin{abstract} Let ${\pmb M}$, ${\pmb N}$ and ${\pmb K}$  be $d$-dimensional
Riemann manifolds. Assume that $\bA:=(A_n)_{n\in\N}$ is a sequence of Lebesgue
measurable subsets of ${\pmb M}$ satisfying a necessary density condition and
$\x:=(x_n)_{n\in\N}$ is a sequence of independent random variables which are
distributed on ${\pmb K}$ according to a measure which is not purely singular
with respect to the Riemann volume. We give a formula for the almost sure value
of the Hausdorff dimension of random covering sets
$\bE(\x,\bA):=\limsup_{n\to\infty}A_n(x_n)\subset {\pmb N}$. Here $A_n(x_n)$ is a
diffeomorphic image of $A_n$ depending on $x_n$. We also verify that the
packing dimensions of $\bE(\x,\bA)$ equal $d$ almost surely.
\end{abstract}

\maketitle

\section{Introduction and main theorem}\label{intro}

Limsup sets, defined as upper limits of various sequences of sets, play
an important role in different areas of mathematics.
One of the earliest occurrences originates from the study of
random placement of circular arcs in the unit circle by Borel {\cite{Bor1897}} in the late
1890's. He stated that a given point belongs to infinitely many arcs
provided that the placement of arcs is random and the sum of
their lengths is infinite. This statement is the origin of what is
nowadays known as the Borel-Cantelli lemma. We refer to
\cite{Ka00} for more details and references on the historical development.
Related to geometric measure theory and fractals, limsup sets appear
implicitly already in the investigation of the Besicovitch-Eggleston sets
concerning the $k$-adic expansions of real numbers {\cite{Be34, Eg49}}. They play also a central
role in Diophantine approximation. For instance, the classical theorems of
Khintchine and Jarnik provide size estimates in terms of Lebesgue and
Hausdorff measure for limsup sets consisting of well-approximable numbers {(cf. \cite{Ha98})}.

In the modern language, random covering sets are a class of limsup sets
defined by means of a family of randomly distributed
subsets of the $d$-dimensional torus $\mathbb T^d:=\mathbb R^d/\mathbb Z^d$.
Supposing that $\bA:=(A_n)_{n\in\N}$ is a deterministic sequence of
non-empty subsets of $\mathbb T^d$ and $\x:=(x_n)_{n\in\N}$ is
a sequence of independent random variables which are uniformly distributed
on $\mathbb T^d$, define a random covering set $\bE(\x,\bA)$ by
\begin{equation*}
\bE(\x,\bA):=\limsup_{n\to\infty}(x_n+A_n)
  =\bigcap_{n=1}^\infty\bigcup_{k=n}^\infty(x_k+A_k),
\end{equation*}
where $x+A:=\{x+y:y\in A\}$. Denoting the Lebesgue measure on
$\mathbb T^d$ by $\mathcal L$, it follows from Borel-Cantelli
lemma and Fubini's theorem that almost surely either
$\mathcal L(\bE(\x,\bA))=0$
or $\mathcal L(\bE(\x,\bA))=1$ depending on whether the series
$\sum_{k=1}^\infty \mathcal L(A_k)$ converges or diverges, respectively.
Note that this result is essentially the higher
dimensional analogue of Borel's original idea concerning the covering of
the circle by random arcs which we discussed in the beginning of this
section.

The case of full Lebesgue measure has been extensively studied.
In 1956 Dvoretzky ~\cite{Dv} posed a problem of finding conditions which
guarantee that the whole torus $\mathbb T^d$ is covered almost surely.
Even in the simplest case when $d=1$ and the generating sets are intervals
of length $(l_n)_{n\in\N}$, this problem, known in literature as the Dvoretzky
covering problem, turned out to be rather long-standing. After substantial
contributions of many authors, including Billard ~\cite{Bi},
Erd{\H{o}s} ~\cite{Er}, Kahane ~\cite{Ka} and
Mandelbrot ~\cite{Man}, the full answer was
given in this context by Shepp ~\cite{Sh} in 1972. He verified that
$\bE(\x,\bA)=\mathbb T^1$ almost surely if and only if
\[
\sum_{n=1}^\infty\frac {1}{n^2}\exp(l_1+\dots+l_n)=\infty,
\]
where the lengths $(l_n)_{n\in\mathbb N}$ are in decreasing order. In full
generality, the Dvoretzky covering problem is still unsolved. The higher
dimensional case has been studied by El H\'elou ~\cite{ElHel} and
Kahane ~\cite{Ka90} among others. In ~\cite{Ka90}, a complete solution is
provided in the case when generating sets are similar simplexes.

For various other aspects of random covering sets, we refer to \cite{BF,
ElHel, Fa, FK, Ha, Ja, Ka85, Ka90, Ka00, LiShXi, Man2, Sh2}.
Recent contributions to the topic include various types of
dynamical models, see ~\cite{FST, JoSt, LiSe}, and
projectional properties ~\cite{CKLS}.

Further motivation to study limsup sets stems from Diophantine approximation.
Recall that for $\phi\colon\mathbb N\to]0,\infty[$, the set of $\phi$
well-approximable numbers consists of those $x\in\mathbb R$ for which
\begin{equation*}%\label{diophantine}
\left|x-\frac{p(q)}q\right|<\phi(q)\text{ for infinitely many }q\in\mathbb N.
\end{equation*}
Given $\phi$, the determination of the size of these limsup sets and various
variants is an important theme in Diophantine approximation and there is
a vastly growing literature on this branch of metric number theory,
see {\cite{BDV, BV} and the references therein}.

In the circle $\mathbb T^1$, the study of
$\phi$ well-approximable numbers may be regarded as a special
case of the shrinking target problem or
dynamical Diophantine approximation formulated in the following manner:
assuming that $X$ is a metric space, $T:X\to X$ is a dynamical system,
$(r_n)_{n\in\mathbb N}$ is a sequence of positive real numbers and
$x_0\in X$, determine the size of the set
\[
\limsup_{n\to\infty}B(T^n(x_0),r_n)=\{x\in X:T^n(x)\in B(x_0,r_n)
 \text{ for infinitely many }n\in\N\},
\]
where $B(x,r)$ is the open ball with radius $r$ centred at $x\in X$. Indeed,
letting $x_0=0$, $r_q=q \phi(q)$ and $T\colon\mathbb T^1\to\mathbb T^1$ be the
rotation by an angle $x$, we recover the case of $\phi$ well-approximable
numbers. A variant of this question, called the moving target problem, is
concerned with the investigation of the limsup set
\[
\{x\in X\mid x\in B(T^n(x_0),r_n)\text{ for infinitely many }n\in\N\},
\]
see \cite{BD, Bu03}. A recent account on this line of research is
provided in \cite{FST}. For an interesting application of limsup sets to the
study of Brownian motion, we refer to \cite{KPX}.

In this paper, we focus on the natural problem of determining almost sure
values of Hausdorff and packing dimensions of random covering
sets in the case when they have zero Lebesgue measure. We denote the
Hausdorff and packing dimensions by $\dimH$ and $\dimp$, respectively.
For $d=1$ and for an arbitrary decreasing sequence $\bA=(A_n)_{n\in\N}$ of
intervals of lengths $(l_n)_{n\in\N}$, the almost sure Hausdorff dimension of the
random covering set is given by
\begin{equation}\label{Duformula}
\dimH\bE(\x,\bA)=\inf\bigl\{t\geq 0:\sum_{n=1}^\infty(l_n)^t<\infty\bigr\}
=\limsup_{n\to\infty}\frac{\log n}{-\log l_n}.
\end{equation}
For $l_n=n^{-\alpha}$, $\alpha>1$, this is proved by Fan and Wu \cite{FW} and, as
explained in their paper, the method works also for more general decreasing
sequences $(l_n)_{n\in\N}$. Using  an approach  different from that of \cite{FW},
Durand \cite{Du} generalised the result of \cite{FW} and obtained a dichotomy
result for the Hausdorff measure of $\bE(\x,\bA)$ for general gauge functions.
The dimension result \eqref{Duformula}, as well as its analogy in
$\mathbb T^d$ for random coverings with balls, can also be derived from
the mass transference principle proved by Beresnevich and Velani in \cite{BV},
see \cite{JJKLS}. In addition to random covering sets, the mass transference
technique has proved to be a useful tool in studying the limsup sets in the
context of Diophantine approximation and shrinking target problems. See e.g.
\cite{BDV, BV, FST, HV}. However, its applicability is
essentially limited to the case when the sequence $\bA$ consists of balls
and, therefore, it cannot be utilised in the general setting of this paper.

The methods used in \cite{Du, FW} rely essentially on
the ambient space being a torus and generating sets being balls.
The question of calculating  the dimensions of random
covering sets in the $d$-dimensional torus was first addressed in \cite{JJKLS}
in the case when the generating sets are  rectangle-like. More precisely, assume that the generating sets in $\bA$  are  of the form  $A_n=\Pi(L_n(R))$ for all
$n\in\N$, where $\Pi:\mathbb R^d\to\mathbb T^d$ is a natural covering map, $R$
is a subset of the closed unit cube $[0,1]^d$ with non-empty interior and, for
all $n\in\N$, the map $L_n:\mathbb R^d\to\mathbb R^d$ is a contracting linear
injection such that the sequences of singular values of $(L_n)_{n\in\N}$
decrease to $0$ as $n$ tends to infinity. Note that the singular values of
$L_n$ are the lengths of the semi-axes of $L_n(B(0,1))$. Under this assumption, according to
\cite{JJKLS}, almost surely the Hausdorff dimension of $\bE(\x,\bA)$ is
given in terms of singular value functions $\Phi^t(L_n)$ (for the definition
see \cite{JJKLS}), that is, almost surely
\begin{equation}\label{JJKLSformula}
\dimH\bE(\x,\bA)=\inf\bigl\{0<t\le d:\sum_{n=1}^\infty\Phi^t(L_n)<\infty\bigr\}
\end{equation}
with the interpretation $\inf\emptyset=d$ (see \cite{JJKLS}).

In \cite{Pe}, Persson  proved that \eqref{JJKLSformula} remains valid when dropping off the
monotonicity assumption on the singular values of $(L_n)_{n\in\N}$ in  \cite{JJKLS}. Indeed, he
showed that for a sequence $\bA$ of open subsets of $\mathbb T^d$, almost
surely
\begin{equation}\label{Persson}
\dimH\bE(\x,\bA)\ge\inf\bigl\{0<t\le d:\sum_{n=1}^\infty g_t(A_n)<\infty\bigr\},
\end{equation}
where
\begin{equation}\label{gtdef}
g_t(F):=\frac{\mathcal L(F)^2}{I_t(F)}
\end{equation}
for all Lebesgue measurable sets $F\subset\mathbb T^d$ with $\L(F)>0$,  and
\begin{equation}\label{setenergy}
I_t(F):=\iint_{F\times F}\vert x-y\vert^{-t}\,d\mathcal L(x)d\mathcal L(y)
\end{equation}
is the $t$-energy of $F$. For simplicity, we use the notation $|x-y|$ for
both the Euclidean distance and the natural distance in $\mathbb T^d$.  When  the generating sets of  $\bA$
are open rectangles, the lower bound in \eqref{Persson} equals the
right-hand side of \eqref{JJKLSformula}.

As will be verified by Example \ref{G>g}, the lower bound in \eqref{Persson} is not optimal. As our first goal, we aim at an exact dimension formula for the
random covering sets constructed from an arbitrary sequence $\bA$ of Lebesgue
measurable sets satisfying a mild density condition.
To this end, we introduce the following notation.
For $0\le t<\infty$, the $t$-dimensional Hausdorff content of a set
$F\subset\mathbb R^d$ is denoted by
\begin{equation}\label{Hauscontent}
\mathcal H_\infty^t(F)
:=\inf\bigl\{\sum_{n=1}^\infty(\diam F_n)^t:F\subset\bigcup_{n=1}^\infty F_n\bigr\},
\end{equation}
where $\diam$ is the diameter of a subset of $\R^d$. For a sequence
$\bA=(A_n)_{n\in\mathbb N}$ of subsets of $\mathbb R^d$, we define
\begin{equation}\label{deft0}
t_0(\bA)
:=\inf\bigl\{0\le t\le d:\sum_{n=1}^\infty\mathcal H_\infty^t(A_n)<\infty\bigr\}
\end{equation}
with the interpretation $\inf\emptyset=d$. If $\bA$
is a sequence of Lebesgue measurable subsets of $\mathbb R^d$, set
\begin{equation}\label{defs0}
s_0(\bA):=\sup\bigl\{0\le s\le d:\sum_{n=1}^\infty G_s(A_n)=\infty\bigr\}
\end{equation}
where
\begin{equation}\label{defG}
G_s(E):=\sup\{g_s(F):F\subset E,\; F\text{ is Lebesgue measurable and }
\mathcal L(F)>0\}
\end{equation}
with the interpretation $\sup\emptyset=0$. Finally, given
$F\subset\mathbb R^d$, we say that a point $x\in F$ has
{\it positive Lebesgue density with respect to $F$} if
\[
\liminf_{r\to 0}\frac{\mathcal L(F\cap B(x,r))}{\L(B(x,r))}>0
\]
and, moreover, the set $F$ has {\it positive Lebesgue density} if all of its
points have positive Lebesgue density with respect to $F$.

As a consequence of our main theorem (see Theorem \ref{main}), we will prove
that almost surely,
\begin{equation}\label{Hausdorff torus}
\dimH\bE(\x,\bA)=s_0(\bA)=t_0(\bA)
\end{equation}
provided that $\bA=(A_n)_{n\in\mathbb N}$ is a sequence of Lebesgue measurable
subsets of $\mathbb T^d$ having positive Lebesgue density. We note that if $U$
is open, then a straightforward approximation argument implies that
\begin{equation*}
G_s(U)=\sup\{g_s(V):V\subset U,\; V\text{ is open and }\mathcal L(V)>0\}.
\end{equation*}
With Persson's result, this characterisation can be employed to give
a more direct proof of the fact that $s_0(\bA)$ is a lower bound
for $\dimH\bE(\x,\bA)$ in the case  when  $\bA$ is a sequence of open
sets. However, this method does not work if the sets in the sequence $\bA$
fail to be open. For this reason, we need to make use of
a completely different approach to deal with a more general
generating sequence $\bA$. For the purpose of proving
\eqref{Hausdorff torus} in full generality, we will introduce
the notion of minimal regular energy (see Section \ref{regularenergy}) and
utilise a technical result (Proposition \ref{thm:energy_lemma}), allowing us
to approximate a given measure $\mu$ and its $s$-energy simultaneously by
a certain sequence of normalised Lebesgue measures, as well as a sophisticated
random mass distribution argument to be carried out in
Section~\ref{lowerboundsection}.

Regarding the packing dimension of random covering sets, we
show that if the sets in $\bA$ are Lebesgue measurable and
$\mathcal L(A_n)>0$ for infinitely many $n\in\N$, then almost surely
\begin{equation}\label{packing torus}
\dimp\bE(\x,\bA)=d.
\end{equation}
For open generating sets, this result is immediate since $\bE(\x,\bA)$ is a
$G_\delta$-set, which is almost surely dense. As in the case of Hausdorff
dimension, replacing open generating sets by Lebesgue measurable ones
(of positive measure) turns out to be a subtle task. The strategy in the proof
of \eqref{packing torus} is somewhat analogous to that of
\eqref{Hausdorff torus}. However, instead of the minimal regular energy and a
mass distribution argument, we apply a result that allows us to conclude that
$\dimp\bE(\x,\bA)=d$ by estimating, for compact sets $F$, the number of dyadic
cubes intersecting $F\cap\bigcup_{i=n}^\infty (x_n+A_n)$ in a set of positive
Lebesgue measure (see Proposition \ref{pro-3.1}). Observe that since
$\bE(\x,\bA)$ is almost surely dense, the box counting dimension of
$\bE(\x,\bA)$ exists and is equal to $d$ almost surely.

To summarise, the equation \eqref{Hausdorff torus} gives a characterisation of
the almost sure value of the Hausdorff dimension of random covering sets
in $\mathbb T^d$ for rather general generating sequences $\bA$ when the
translations $\x=(x_n)_{n\in\N}$ are independent and uniformly distributed.
As illustrated by Examples
\ref{posdensityupper} and \ref{posdensity}, the assumption on positive
Lebesgue density cannot be replaced by the weaker assumption
that $\mathcal L(A_n\cap B(x,r))>0$ for all $r>0$, $x\in A_n$ and $n\in\N$.
In our main result, Theorem~\ref{main}, we will further generalise
\eqref{Hausdorff torus} and \eqref{packing torus} in several different
directions. Firstly, we will replace the uniform distribution by an arbitrary
Radon probability measure which is not purely singular with respect to the
Lebesgue measure. Secondly, we will be able to replace the torus $\mathbb T^d$
by any open subset of $\mathbb R^d$, in particular, by $\mathbb R^d$ itself.
These generalisations allow us to deduce \eqref{Hausdorff torus} and
\eqref{packing torus} for many natural unbounded models, including the case
when $(x_n)_{n\in\N}$ are independent Gaussian random variables on $\mathbb R^d$
and $(A_n)_{n\in\N}$ are Lebesgue measurable subsets with positive Lebesgue
density supported on a fixed compact subset of $\mathbb R^d$. Finally, we
extend \eqref{Hausdorff torus} and \eqref{packing torus} to Lie groups
and, more generally, to smooth Riemann manifolds. To achieve this,
note that when the ambient space is $\mathbb T^d$, the structure is linear in
the sense that the random covering set is of the form
\begin{equation}\label{linearcase}
\bE(\x,\bA)=\limsup_{n\to\infty}f(x_n,A_n)
\end{equation}
where the function $f\colon\mathbb T^d\times\mathbb T^d\to\mathbb T^d$ is
defined as $f(x,y)=x+y$. Thus, a natural attempt to extend
\eqref{Hausdorff torus} and \eqref{packing torus}
to Lie groups or, more generally, to smooth manifolds is to study
a nonlinear structure where $f$ is a smooth mapping.

Before presenting our main result in full generality, we will set up some
further notation. Let $U,V\subset\mathbb R^d$ be open sets and let
$f\colon U\times V\to\mathbb R^d$ be a $C^1$-map such that the maps
$f(\cdot,y)\colon U\to f(U,y)$ and $f(x,\cdot)\colon V\to f(x,V)$ are
diffeomorphisms for all $(x,y)\in U\times V$. Denote by $D_1f$ and $D_2f$
the derivatives of $f(\cdot,y)$ and $f(x,\cdot)$, respectively.
We assume that there exist a constant $C_u>0$ such that
\begin{equation}\label{diffeobound}
\Vert D_i f(x,y)\Vert\,,\Vert(D_i f(x,y))^{-1}\Vert \le C_u
\end{equation}
for all $(x,y)\in U\times V$ and $i=1,2$.

Let $\sigma$ be a Radon probability measure on $U$ which is not purely singular
with respect to the Lebesgue measure
$\L$. We consider the probability space $(U^\N,\mathcal F,\P)$ which is the
completion of the infinite product of $(U,\B(U),\sigma)$, where $\B(U)$ is the
Borel $\sigma$-algebra on $U$. Assuming that $\bA=(A_n)_{n\in\mathbb N}$ is a
sequence of subsets of $V$, define for all $\x\in U^\N$ a random covering
set $\bE(\x,\bA)$ by
\[
\bE(\x,\bA):=\limsup_{n\to\infty}f(x_n,A_n)
=\bigcap_{n=1}^\infty\bigcup_{k=n}^\infty f(x_k,A_k).
\]

Now we can finally present our main theorem.

\begin{theorem}\label{main}
Let $f\colon U\times V\to\R^d$ be as above and let $\Delta\subset V$ be
compact. Assume that $\bA=(A_n)_{n\in\N}$ is a sequence of non-empty subsets of
$\Delta$. Then
\begin{itemize}
\item[(a)] $\dimH\bE(\x,\,\bA)\le t_0(\bA)$ for all $\x\in U^\N$.
\item[(b)] $\dimH\bE(\x,\bA)\ge s_0(\bA)$ for $\P$-almost all $\x\in U^\N$
     provided that $\bA$ is a sequence of Lebesgue measurable sets.
\item[(c)] $\dimH\bE(\x,\bA)=s_0(\bA)=t_0(\bA)$ for $\P$-almost all $\x\in U^\N$
     provided that $\bA$ is a sequence of Lebesgue measurable sets with
     positive Lebesgue density.
\item[(d)] $\dimp\bE(\x,\bA)=d$ for $\P$-almost all $\x\in U^\N$ provided that
     $A_n$ are Lebesgue measurable and $\L(A_n)>0$ for infinitely many $n\in\N$.
\end{itemize}
\end{theorem}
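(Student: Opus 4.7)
For part (a), my plan is to exploit the Lipschitz bound in \eqref{diffeobound}. Since $\|D_2 f(x,y)\|\le C_u$ on $U\times V$, any $\delta$-cover $\{F_j\}$ of $A_k$ pushes forward to a cover $\{f(x_k,F_j)\}$ of $f(x_k,A_k)$ whose diameters are scaled by at most $C_u$, giving $\mathcal H^t_\infty(f(x_k,A_k))\le C_u^t\mathcal H^t_\infty(A_k)$ uniformly in $x_k\in U$. For any $t>t_0(\bA)$ the series $\sum_k\mathcal H^t_\infty(A_k)$ converges, so the tail covers $\bigcup_{k\ge n}f(x_k,A_k)\supset\bE(\x,\bA)$ witness $\mathcal H^t(\bE(\x,\bA))=0$ for every $\x\in U^\N$.

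For part (b), the strategy is a random mass distribution argument producing, for each $s<s_0(\bA)$, a nontrivial Borel measure supported on $\bE(\x,\bA)$ with finite $s$-energy, which by a standard Frostman argument gives $\dimH\bE(\x,\bA)\ge s$ almost surely. By \eqref{defs0}--\eqref{defG} I choose Lebesgue measurable subsets $F_n\subset A_n$ with $g_s(F_n)$ close to $G_s(A_n)$, so that $\sum_n g_s(F_n)=\infty$. Two reductions simplify matters: first, localise $\sigma$ to a compact $K\subset U$ on which $\sigma\ge c\mathcal L|_K$ for some $c>0$ (possible since $\sigma$ is not purely singular); second, linearise $f$ via the uniform bounds on $D_1 f$ and $D_2 f$, reducing to translations on $\R^d$ modulo bounded distortion. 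I then form truncated random measures
\[
\mu_{n,N}=\sum_{k=n}^N\frac{a_k}{\mathcal L(F_k)}\,\mathcal L|_{f(x_k,F_k)}
\]
with weights $a_k$ proportional to $g_s(F_k)$. A Fubini computation splits $\E\,I_s(\mu_{n,N})$ into a diagonal contribution bounded by $\sum_k a_k^2/g_s(F_k)$ and an off-diagonal contribution controlled by the $L^\infty$ norm of the absolutely continuous part of $\sigma$; for the chosen weighting both remain uniformly bounded. Proposition \ref{thm:energy_lemma} enters at the step where $\mathcal L|_{F_k}/\mathcal L(F_k)$ is replaced by a more regular approximant whose $s$-energy is still comparable to $1/g_s(F_k)$. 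Passing to a weak-star cluster point as $N\to\infty$ and then letting $n\to\infty$ produces a random measure with support in $\bE(\x,\bA)$, positive mass on an event of positive probability, and finite $s$-energy; a zero-one law upgrades positive probability to almost sure. This construction is the main obstacle of the paper: since the $A_n$ may be neither open nor of positive density, the direct energy approach of \cite{Pe} is unavailable and one is forced into the minimal regular energy framework of Section \ref{regularenergy}.

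For part (c), (a) yields $\dimH\bE(\x,\bA)\le t_0(\bA)$ deterministically and (b) yields $\dimH\bE(\x,\bA)\ge s_0(\bA)$ almost surely, so it remains to prove $s_0(\bA)\ge t_0(\bA)$ under positive Lebesgue density. At every $x\in A_n$ there exist arbitrarily small radii $r>0$ with $\mathcal L(A_n\cap B(x,r))\ge\eta\mathcal L(B(x,r))$ for some $\eta>0$; testing the set $F=A_n\cap B(x,r)$ in \eqref{defG} together with the elementary estimate $I_s(B(x,r))\lesssim r^{2d-s}$ for $s<d$ gives $g_s(F)\gtrsim\eta^2 r^s$. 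A Vitali-type covering argument converts any efficient cover of $A_n$ by sets of small diameter into balls of this kind, yielding the uniform bound $G_s(A_n)\gtrsim\mathcal H^s_\infty(A_n)$ and hence $s_0(\bA)\ge t_0(\bA)$.

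For part (d), the plan mirrors (b) at the combinatorial level of dyadic cubes, invoking Proposition \ref{pro-3.1}: it suffices to show that for every compact $F\subset V$ with $\mathcal L(F)>0$ and every $n\in\N$, an asymptotically positive proportion of dyadic cubes $Q$ of side $2^{-m}$ inside $F$ meets $\bigcup_{k\ge n}f(x_k,A_k)$ in a set of positive Lebesgue measure almost surely as $m\to\infty$. Restricting attention to indices $k$ with $\mathcal L(A_k)>0$, a first-moment computation based on the absolutely continuous part of $\sigma$ and bounded distortion of $f$ shows that $\E\,\mathcal L(Q\cap\bigcup_{k\ge n}f(x_k,A_k))$ is at least a fixed positive fraction of $\mathcal L(Q)$ for every such $Q$, and a second-moment or Borel--Cantelli argument along a divergent subseries upgrades this to almost sure occurrence uniformly in $Q$. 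Proposition \ref{pro-3.1} then delivers $\dimp\bE(\x,\bA)=d$ almost surely.
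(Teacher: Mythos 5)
Your part (a) is correct and is exactly the paper's argument, and your outline of (d) is close to the paper's route through Proposition~\ref{pro-3.1} and the second Borel--Cantelli lemma. The problems are in (b) and (c). In (b), the final step ``a zero-one law upgrades positive probability to almost sure'' is precisely what cannot be done here: for Lebesgue measurable (rather than analytic) generating sets it is not known that $\{\x:\dimH\bE(\x,\bA)\ge s\}$ is measurable, so Kolmogorov's zero-one law is unavailable --- the paper flags this explicitly in Section~\ref{finalremarks}. The entire minimal-regular-energy machinery of Section~\ref{regularenergy} (Proposition~\ref{pro-1.5}, Lemma~\ref{lem-6}, Proposition~\ref{assumptionOK}) exists to replace the zero-one law by an explicit nested construction of Borel sets $\Omega$ with $\P(\Omega)\ge 1-\sum_n\ell^{-n}$ on which the energy bounds hold for \emph{every} block simultaneously. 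Two further points in your (b) would fail as written: a weak-star cluster point of $\mu_{n,N}$ as $N\to\infty$ is supported on $\overline{\bigcup_{k\ge n}f(x_k,F_k)}$ for that single $n$, and different $n$ give different measures, so you do not get one measure on the limsup set without the nested block structure; and the off-diagonal energy comparison $\iint_{f(x,F_k)\times f(y,F_j)}|u-v|^{-s}\lesssim\L(F_k)\L(F_j)|x-y|^{-s}$ requires the generating sets to be shrunk to sets $K_n\subset A_n$ with $\diam K_n\to0$ accumulating at a single point $y_0$ (Definition~\ref{sequenceAn} and Lemma~\ref{intestimate}); this reduction, carried out in the paper via a nested dyadic cube selection, is absent from your sketch.

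In (c), the inequality $G_s(A_n)\gtrsim\mathcal H^s_\infty(A_n)$ cannot be obtained by a Vitali selection of positive-density balls. A disjoint family of balls $B_i$ on which $A_n$ has density $\ge\eta$ weights each region by its full Lebesgue measure, and the resulting set $F=\bigcup_i(A_n\cap B_i)$ only gives $g_s(F)\gtrsim g_s(A_n)$, i.e.\ Persson's bound, which Example~\ref{G>g} shows may be far smaller than $\mathcal H^s_\infty(A_n)$ (also, your $\eta$ depends on $x$ and is not uniform). To beat $g_s(A_n)$ one must take \emph{different proportions} of $A_n$ in different regions, dictated by a Frostman measure for $\mathcal H^s_\infty(A_n)$; this is what Lemma~\ref{lem-Leb}, Lemma~\ref{lemma-5} and Proposition~\ref{thm:energy_lemma} accomplish (and your description of Proposition~\ref{thm:energy_lemma} inverts its role: it approximates a general finite-energy measure \emph{by} normalised Lebesgue measures on subsets, it is not a regularisation of $\L|_{F_k}/\L(F_k)$). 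Even then the paper only proves $\mathcal H^s_\infty(E)\le CG_t(E)$ for $t<s$ strictly (Lemma~\ref{HleG}), which suffices for $s_0(\bA)=t_0(\bA)$ but is weaker than the single-exponent inequality you assert.
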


It follows immediately from Theorem~\ref{main}.(d) that the upper box counting
dimension of $\bE(\x,\bA)$ equals $d$ almost surely. From the proof of
Theorem~\ref{main}.(d), we see that $\bE(\x,\bA)$ is almost surely dense in a
set of positive Lebesgue measure. Therefore, also the lower box counting
dimension equals $d$ almost surely. As a corollary of Theorem \ref{main}, we
obtain the following dimension result for random covering sets in Riemann
manifolds. Note that in Corollary~\ref{corollary}, the quantities $s_0(\bA)$
and $t_0(\bA)$ are defined as in \eqref{deft0} and \eqref{defs0}
by using the distance function induced by the Riemann metric and by
replacing $\L$ by the Riemann volume.

\begin{corollary}\label{corollary}
Let ${\pmb K}$, ${\pmb M}$ and ${\pmb N}$ be $d$-dimensional Riemann
manifolds. Assume that $f\colon{\pmb K}\times{\pmb M}\to{\pmb N}$ is a
$C^1$-map such that $f(x,\cdot)$ and $f(\cdot,y)$ are local diffeomorphisms
satisfying \eqref{diffeobound}. Let $\Delta\subset{\pmb M}$ be compact and let
$\bA=(A_n)_{n\in\N}$ be a sequence of subsets of $\Delta$. Suppose that $\sigma$
is a Radon probability measure on ${\pmb K}$ such that it is not purely
singular with respect to the Riemann volume
on ${\pmb K}$. Then the statements (a)--(d) of Theorem~\ref{main} are valid.
\end{corollary}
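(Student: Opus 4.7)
The plan is to deduce the corollary from Theorem~\ref{main} by localizing to Euclidean charts on all three manifolds. The Hausdorff and packing dimensions are bi-Lipschitz invariants, any manifold chart is bi-Lipschitz between compact subsets and their Euclidean images, and the Riemann volume restricted to a chart has density bounded above and below relative to Lebesgue measure; hence the hypotheses (positive Lebesgue density and ``$\sigma$ not purely singular with respect to Riemann volume'') and the quantities $s_0(\bA)$ and $t_0(\bA)$ all transfer faithfully between the manifold and the chart settings.

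The first step will be to localize on $\pmb M$. I cover the compact set $\Delta$ by finitely many open sets $V_1,\ldots,V_m$ whose closures lie in charts $\eta_i : V_i \to \R^d$ of $\pmb M$ (refining slightly so each $A_n^{(i)}:=A_n\cap V_i$ has compact closure inside the chart domain), and set $\bA^{(i)}:=(A_n^{(i)})_{n\in\N}$. Since $\bE(\x,\bA)=\bigcup_{i=1}^m\bE(\x,\bA^{(i)})$, the dimensions of $\bE(\x,\bA)$ equal the maxima over $i$ of those of the $\bE(\x,\bA^{(i)})$. Positive Lebesgue density of $\bA$ is inherited by each $\bA^{(i)}$ since $V_i$ is open, and monotonicity of Hausdorff content yields $t_0(\bA^{(i)})\le t_0(\bA)$. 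For the comparison of $s_0$, given $s<s_0(\bA)$ and a witness $F\subset A_n$ with $g_s(F)$ close to $G_s(A_n)$, a pigeonhole on the disjointification of $\{F\cap V_i\}$ produces some piece $F_i$ of measure at least $\L(F)/m$; since $I_s$ is monotone in the domain, $g_s(F_i)\ge g_s(F)/m^2$, so $\max_i G_s(A_n^{(i)})\ge G_s(A_n)/m^2$, and a second pigeonhole over $n$ and $i$ produces a fixed $i$ with $\sum_n G_s(A_n^{(i)})=\infty$. Consequently $\max_i s_0(\bA^{(i)})\ge s_0(\bA)$, and it suffices to prove the corollary for a single $\bA^{(i)}$.

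Next I localize on $\pmb K$ and $\pmb N$. Since $\sigma$ is not purely singular with respect to Riemann volume, some chart $(W,\phi)$ of $\pmb K$ satisfies $\sigma_{ac}(W)>0$. I pick a Lebesgue density point $x_0\in W$ of the absolutely continuous part of $\sigma$ and shrink $W$ to a relatively compact open neighborhood $W_0$ of $x_0$ with $\sigma_{ac}(W_0)>0$; by continuity of $f$ and compactness, $f(\overline{W_0}\times\overline{V_i})$ is covered by finitely many charts of $\pmb N$. A further finite subdivision of $V_i$ (preserving the properties of the previous step) lets us assume $f(W_0\times V_i)\subset Y$ for a single chart $(Y,\psi)$ of $\pmb N$, and a final shrinking (via the inverse function theorem) makes $\tilde f(\cdot,v)$ and $\tilde f(u,\cdot)$ honest diffeomorphisms, where $\tilde f:=\psi\circ f\circ(\phi^{-1},\eta_i^{-1}):U'\times V'\to\R^d$ with $U':=\phi(W_0)$ and $V':=\eta_i(V_i)$. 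The bounds~\eqref{diffeobound} transfer to $\tilde f$ with new constants depending on the sup-norms of the chart derivatives over the relevant compact sets, and $\tilde\sigma:=\phi_\ast(\sigma|_{W_0}/\sigma(W_0))$ is not purely singular with respect to Lebesgue on $U'$.

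Finally I apply Theorem~\ref{main} in the Euclidean setting. The Bernoulli indicators $\mathbf{1}\{x_n\in W_0\}$ are i.i.d.\ of parameter $p=\sigma(W_0)>0$, and since $G_s(A_n^{(i)})\le(\diam\Delta)^s$ is uniformly bounded, a Chebyshev variance estimate shows $\sum_{n:x_n\in W_0}G_s(A_n^{(i)})=\infty$ almost surely whenever $\sum_n G_s(A_n^{(i)})=\infty$. Conditional on the a.s.\ infinite sub-index set $N=\{n:x_n\in W_0\}=\{n_k\}_k$, the variables $(\phi(x_{n_k}))_k$ are i.i.d.\ from $\tilde\sigma$, and Theorem~\ref{main} applied to $(U',V',\tilde f,\tilde\sigma,(\eta_i(A_{n_k}^{(i)}))_k)$ yields the lower bound in~(b), the equality in~(c), and $\dimp=d$ in~(d) for $\bE(\x,\bA^{(i)})\cap Y$; pulling back by $\psi$ and maximizing over $i$ completes the proofs of~(b),~(c), and~(d). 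Statement~(a) is deterministic and follows from $\dimH\bE(\x,\bA)\le\max_i t_0(\bA^{(i)})\le t_0(\bA)$ via Theorem~\ref{main}(a) in each chart. I expect the main obstacle to be the compatible three-manifold chart refinement in step three, where the atlases of $\pmb K$, $\pmb M$, and $\pmb N$ must be adjusted simultaneously so that $f(W_0\times V_i)$ lands in a single $\pmb N$-chart while preserving positive density, divergence of the $G_s$-sum, and the upper-bound properties; a secondary subtlety is the non-monotonicity of $g_s$ under passage to subsets, which I handle via the measure-pigeonhole argument on witnesses.
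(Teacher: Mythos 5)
Your proposal is correct and follows essentially the same route as the paper: cover ${\pmb M}$ and ${\pmb N}$ (and then ${\pmb K}$) by finitely many coordinate charts, use that $\limsup$ distributes over finite unions together with the chart-invariance of $s_0$, $t_0$ and the dimensions, and apply Theorem~\ref{main} in each chart. The only real divergence is in handling $\sigma$: you thin the sequence to the indices landing in a single chart $W_0$ and apply Theorem~\ref{main} conditionally to i.i.d.\ samples from the normalised restriction, whereas the paper simply observes that the proof of Theorem~\ref{main}.(b) already confines itself to a compact set where $\sigma$ is absolutely continuous, so the chart covering of ${\pmb K}$ is harmless; both justifications are valid.
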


As mentioned earlier, choosing ${\pmb K}={\pmb M}={\pmb N}=\mathbb T^d$,
$f(x,y)=x+y$ and $\sigma=\L$,
we recover the previously mentioned setting in $\mathbb T^d$. The assumption
that the generating sets are subsets of a compact set $\Delta$ is needed,
for example, to guarantee that $\bE(\x,\bA)$ is non-empty.
A natural class of generating sets $\bA$ which satisfy the
assumptions of Theorem~\ref{main} and to which the earlier known results are
not applicable are regular Cantor sets having positive Lebesgue measure.
For the role of other assumptions in Theorem~\ref{main}, we refer to
Section~\ref{examples} where, among other things, sharpness of our results
will be discussed. Theorem~\ref{main} has a refinement concerning the
Hausdorff measures of $\bE(\x,\bA)$ with respect to doubling gauge functions.
The exact statement of this result is given in Section~\ref{finalremarks}.

The paper is organised as follows: We begin with technical auxiliary results
in Section~\ref{auxiliary}. In Section~\ref{upperbound}, we prove
Theorem~\ref{main} (a) and show that, under the assumptions of (c), we have
$s_0(\bA)=t_0(\bA)$. In Section~\ref{regularenergy}, we introduce a new concept
called minimal regular energy and show how it can be used to estimate
Hausdorff dimensions of random covering sets. Section~\ref{lowerboundsection}
is dedicated to the proof of Theorem~\ref{main} (b) whereas
the statement (d) is handled in Section~\ref{packingdimension}.
In Section~\ref{examples}, we explain how Corollary~\ref{corollary} follows
from Theorem~\ref{main} and illustrate by examples the role of the
assumptions
and the sharpness of Theorem~\ref{main}. In the last section, we give
further generalisations of  Theorem~\ref{main} and some remarks. For example, we present some results concerning  Hausdorff measures of random covering sets  with respect
to general gauge functions.

\section{Auxiliary results}\label{auxiliary}

In this section we prove technical lemmas which will be needed
in Sections \ref{upperbound}--\ref{packingdimension}.
When studying of random covering sets in the torus, one often
utilises the simple fact that $u\in x+E$ if and only if $x\in u-E$ for every
$E\subset\mathbb T^d$. In the nonlinear setting, given
a parameterised family of diffeomorphisms $W_x$, we attempt to find
a parameterised family of diffeomorphisms $X_u$ such that $u\in W_x(E)$ if and
only if $x\in X_u(E)$. It is easy to see that the linearised local version of
this problem has a solution and, therefore, this should be the case for the
original nonlinear problem as well. In order to state this result formally, we
need the following notation.

\begin{definition}\label{bidiffeo}
Let $U\subset\R^d$ be open. A $C^1$-map $W:U\times\R^d\to\R^d$ is said to be a uniform
local bidiffeomorphism, if there exist $r_0>0$, $y_0\in\R^d$ and a constant
$C>0$ such that for all $x\in U$ and $y\in B(y_0,r_0)$, the maps
$W(x,\cdot):B(y_0,r_0)\to W(x,B(y_0,r_0))$ and $W(\cdot,y):U\to W(U,y)$ are
uniform diffeomorphisms, that is, diffeomorphisms satisfying
\begin{equation}\label{bidiffeobound}
\Vert D_iW(x,y)\Vert\,,\,\Vert (D_iW(x,y))^{-1}\Vert\le C
\end{equation}
for all $x\in U$, $y\in B(y_0,r_0)$ and $i=1,2$, where $D_1W$ and $D_2W$ denote
the derivatives of $W(\cdot,y)$ and $W(x,\cdot)$, respectively. A uniform local
bidiffeomorphism $W$ generates a parameterised family of uniform diffeomorphisms
$W_x\colon B(y_0,r_0)\to W_x(B(y_0,r_0))$, $x\in U$, by the formula
$W_x(y):=W(x,y)$.
\end{definition}

\begin{lemma}\label{inverse}
Let $W_x\colon B(y_0,r_0)\to W_x(B(y_0,r_0))$, $x\in U$, be a parameterised
family of uniform diffeomorphisms generated by a uniform local
bidiffeomorphism $W:U\times\R^d\to\R^d$. Then there exists a parameterised
family of uniform diffeomorphisms $X_z\colon V_z\to X_z(V_z)$ where
${z}\in W(U,B(y_0,r_0))$ and $V_{{z}}\subset B(y_0,r_0)$ is open such that for all
$E\subset B(y_0,r_0)$, we have
\[
z\in W_x(E)\text{ if and only if }x\in X_{{z}}(E\cap V_{z}).
\]
Furthermore,
\begin{equation}\label{singfinal}
\Vert D X_z(y)\Vert\,,\,\Vert (D X_z(y))^{-1}\Vert\le C^2
\end{equation}
for all $z\in W(U,B(y_0,r_0))$ and $y\in V_z$. Here $C$ is as in
Definition~\ref{bidiffeo}.
\end{lemma}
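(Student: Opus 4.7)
The strategy is to define $X_z(y)$ as the unique $x \in U$ satisfying $W(x,y)=z$, and then to use the implicit function theorem to extract smoothness and the derivative bounds.

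First I would define, for each $z \in W(U, B(y_0,r_0))$, the set
\[
V_z := \{\, y \in B(y_0,r_0) : z \in W(U,y)\,\}
\]
and the map $X_z \colon V_z \to U$ by the equation $W(X_z(y), y) = z$. This is well posed: for each $y$, the map $W(\cdot,y) \colon U \to W(U,y)$ is a diffeomorphism by hypothesis, so the equation $W(x,y)=z$ admits a unique solution $x \in U$ whenever $y \in V_z$. The biconditional in the statement is then essentially tautological: $z \in W_x(E)$ means $z = W(x,y)$ for some $y \in E$; such a $y$ automatically lies in $V_z$, and $X_z(y) = x$ by uniqueness, giving $x \in X_z(E \cap V_z)$. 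The converse direction is the same argument read backwards.

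Next I would show that $V_z$ is open and $X_z$ is $C^1$ via the implicit function theorem. Fix $y_1 \in V_z$ and set $x_1 := X_z(y_1) \in U$. Since $D_1 W(x_1, y_1)$ is invertible by \eqref{bidiffeobound}, applying the implicit function theorem to $F(x,y) := W(x,y) - z$ at $(x_1, y_1)$ produces an open neighbourhood $V' \subset B(y_0,r_0)$ of $y_1$ and a $C^1$ map $\phi \colon V' \to U$ with $\phi(y_1) = x_1$ and $W(\phi(y),y) = z$ throughout $V'$. Hence $V' \subset V_z$ and, by the uniqueness of $X_z$, we have $X_z = \phi$ on $V'$. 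This proves openness of $V_z$ and the $C^1$ regularity of $X_z$.

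For the derivative bound \eqref{singfinal}, I would differentiate the identity $W(X_z(y), y) = z$ with respect to $y$ using the chain rule, obtaining
\[
D_1 W(X_z(y), y)\, D X_z(y) + D_2 W(X_z(y), y) = 0,
\]
so that $D X_z(y) = -\bigl[D_1 W(X_z(y), y)\bigr]^{-1} D_2 W(X_z(y), y)$ and, by inversion, $\bigl(D X_z(y)\bigr)^{-1} = -\bigl[D_2 W(X_z(y), y)\bigr]^{-1} D_1 W(X_z(y), y)$. Applying \eqref{bidiffeobound} to the two factors in each expression gives the norm bound $C^2$, and in particular shows that $D X_z(y)$ is invertible, so $X_z$ is a local diffeomorphism onto its image; combined with the injectivity coming from uniqueness in the defining equation, it is a diffeomorphism $V_z \to X_z(V_z)$.

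There is no real obstacle here — the content is a routine implicit function theorem argument — but the step that needs the most care is confirming that the locally constructed $\phi$ from the IFT coincides with the globally defined $X_z$; this is where the hypothesis that $W(\cdot,y)$ is a diffeomorphism (not merely a local one) is used, guaranteeing that $W(x,y) = z$ has a unique global solution in $U$ for each admissible $y$.
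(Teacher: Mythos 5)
Your proof is correct. The route you take is a mild organisational variant of the paper's: you define $X_z$ directly as $y \mapsto x$ via the equation $W(x,y)=z$ and invoke the implicit function theorem to get openness of $V_z$ and $C^1$ regularity of $X_z$, whereas the paper first constructs the inverse map $R^z(x) := W(x,\cdot)^{-1}(z)$ (an explicit formula as a function of $x$, making its differentiability and injectivity immediate from the assumed uniform diffeomorphism properties of $W$), computes $DR^z$ by implicit differentiation, and then sets $X_z := (R^z)^{-1}$, $V_z := R^z(U^z)$. The derivative bounds are the same in both arguments up to inversion, since your $DX_z = -(D_1W)^{-1}D_2W$ is exactly $(DR^z)^{-1}$ with the paper's $DR^z = -(D_2W)^{-1}D_1W$. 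The paper's phrasing avoids an explicit appeal to the implicit function theorem by leaning on the stated regularity of the family $T_x$; your version appeals to the IFT directly, which is arguably more self-contained. Both establish the same biconditional by the same tautological observation.
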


\begin{proof}
Since for all $z\in W(U,B(y_0,r_0))$ the set
$U^z:=\{x\in U:z\in W(x,B(y_0,r_0))\}$
is open and non-empty, we may define a map $R^z:U^z\to B(y_0,r_0)$ by
$R^z(x):=T_x(z)$ where $T_x:=W(x,\cdot)^{-1}$. That is,
$$
W(x, R^z(x))=W(x, T_x(z))=z.
$$
 Consider $z\in W(U,B(y_0,r_0))$.
We show that $R^z:U^z\to R^z(U^z)$ is a uniform diffeomorphism.
If $x,u\in U^z$ satisfy $R^z(x)=R^z(u)$, then $T_x(z)=T_u(z)=y$ for some
$y\in B(y_0,r_0)$, and therefore, $W(x,y)=z=W(u,y)$. Thus $x=u$, implying
that $R^z$ is
injective. Since $W(x,T_x(z))=z$ for all $x\in U^z$, we have
$D_1W(x,T_x(z))+D_2W(x,T_x(z))\circ D_xT_x(z)=0$,
giving
\[
DR^z(x)=D_xT_x(z)=-\bigl(D_2W(x,T_x(z))\bigr)^{-1}\circ D_1W(x,T_x(z)).
\]
This implies
\begin{equation}\label{singularvalue}
\Vert DR^z(x)\Vert\,,\,\Vert (DR^z(x))^{-1}\Vert\le C^2
\end{equation}
for all $z\in W(U,B(y_0,r_0))$ and $x\in U^z$. Observing
that for all $E\subset B(y_0,r_0)$ and $x\in U$,
\[
z\in W(x,E)\iff T_x(z)\in E\iff R^z(x)\in E\iff x\in (R^{ z})^{-1}(E),
\]
we may define $V_{ z}:=R^{ z}(U^{ z})$ and $X_{ z}:=(R^{ z})^{-1}$. The claim \eqref{singfinal}
follows from \eqref{singularvalue}.
\end{proof}

For every $E\subset\R^d$ and $\delta>0$, let
\begin{equation}\label{dneighbourhood}
\overline V_\delta(E):=\{x\in\R^d:\dist(x,E)\le\delta\}
\end{equation}
be the closed $\delta$-neighbourhood of $E$. Here
$\dist(x,E):=\inf\{|x-a|:a\in E\}$ is the distance between $x$ and $E$.
According to the next lemma, using the notation of Lemma \ref{inverse},
for each Lebesgue measurable set $F\subset\R^d$, the
Lebesgue measure of $F\cap W_x(E)$ is close to that of $W_x(E)$ for most
points $x\in F$ provided that $E$ is a subset of a sufficiently small ball.

\begin{lemma}\label{prod}
Let $U\subset\R^d$, $r_0>0$, $y_0\in\R^d$ and $W:U\times\R^d\to\R^d$ be as in
Definition~\ref{bidiffeo}. Assume that $W_x(y_0)=x$ for all $x\in U$ and
$F\subset U$ is Lebesgue measurable.
Then for every $\varepsilon>0$, there is $\delta=\delta(F,\varepsilon)>0$ such
that for all Lebesgue measurable sets $E\subset B(y_0,\delta)$, we have
\begin{equation}\label{prodclaim}
\L\bigl(\bigl\{x\in F:\L(F\cap W_x(E))\ge (1-\varepsilon)\L(W_x(E))\bigr\}\bigr)
      \ge (1-\varepsilon)\L(F).
\end{equation}
\end{lemma}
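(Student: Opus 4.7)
The plan is to apply Markov's inequality to reformulate the claim as an integral estimate, then use Tonelli to exchange the order of integration, and finally close the bound with an $L^1$-continuity argument for the parameterised diffeomorphism family. Define the ``bad'' set
\[
B_E:=\{x\in F : \L(F\cap W_x(E)) < (1-\varepsilon)\L(W_x(E))\}.
\]
Then $B_E=\{x\in F : \L(W_x(E)\setminus F) > \varepsilon\L(W_x(E))\}$ and it suffices to show $\L(B_E)\le\varepsilon\L(F)$. Markov's inequality gives
\[
\varepsilon\L(B_E)\le \int_F\frac{\L(W_x(E)\setminus F)}{\L(W_x(E))}\,dx,
\]
and the change of variables formula together with the uniform bi-Lipschitz bound on $W_x$ yields $\L(W_x(E))\ge C^{-d}\L(E)$. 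So it remains to show
\[
\int_F\L(W_x(E)\setminus F)\,dx\le c(\delta)\,\L(E)
\]
with $c(\delta)\to 0$ as $\delta\to 0$, uniformly over measurable $E\subset B(y_0,\delta)$.

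To produce such a bound I would first reduce to $\L(F)<\infty$ and $\overline F\subset U$, the general case following by exhausting $F$ by $F\cap B(0,n)$. Writing $\phi_y(x):=W(x,y)$, the change of variables $z=W_x(y)$ inside the integrand, followed by Tonelli and the bound $|\det D_2 W|\le C^d$, gives
\[
\int_F\L(W_x(E)\setminus F)\,dx\le C^d\int_E\L(F\setminus\phi_y^{-1}(F))\,dy.
\]
Thus everything reduces to proving $\L(F\triangle\phi_y^{-1}(F))\to 0$ as $y\to y_0$. Since $\phi_{y_0}=\Id$ and $|\phi_y(x)-x|\le C|y-y_0|$ by the uniform Lipschitz bound, $\phi_y\to\Id$ uniformly on $U$. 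I would then adapt the standard proof of $L^1$-continuity of translations: approximate $\mathbf 1_F$ in $L^1$ by a continuous, compactly supported $g$, and split
\[
\|\mathbf 1_F-\mathbf 1_F\circ\phi_y\|_{L^1}\le \|\mathbf 1_F-g\|_{L^1}+\|g-g\circ\phi_y\|_{L^1}+\|(g-\mathbf 1_F)\circ\phi_y\|_{L^1},
\]
where the first term is small by choice of $g$, the third is at most $C^d$ times the first by change of variables, and the middle term tends to zero as $y\to y_0$ by uniform continuity of $g$ combined with $\phi_y\to\Id$ uniformly on $\mathrm{supp}(g)$.

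The main obstacle I anticipate is the uniformity over arbitrary measurable $E\subset B(y_0,\delta)$. A pointwise Lebesgue-density argument would bound $\L(W_x(E)\setminus F)$ by $\L(B(x,C\delta)\cap F^c)$, but for thin $E$ the denominator $\L(W_x(E))$ can be far smaller than $\L(B(x,C\delta))$ and the ratio is uncontrolled. The Fubini step above bypasses this difficulty by trading the integration over $x\in F$ for an integration over $y\in E$ on which $E$ enters only through its total measure. A minor technicality is that the change of variables $z=W_x(y)$ and the formation of $\phi_y^{-1}(F)$ both require $F\subset\phi_y(U)$ for all $y\in B(y_0,\delta)$; this is the reason for the initial reduction to $\overline F\subset U$ combined with shrinking $\delta$ using the uniform continuity of $W$.
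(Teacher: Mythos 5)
Your argument is correct, and it takes a genuinely different route from the paper's. The paper argues by contradiction for compact $F$: it integrates $\L(F^c\cap W_x(E))$ over the bad set, applies Fubini in the \emph{image} variable $z$, and invokes Lemma~\ref{inverse} to bound $\int_F\chi_{W_x(E)}(z)\,d\L(x)=\L(F\cap X_z(E\cap V_z))\le C^{2d}\L(E)$; the smallness then comes from the geometric fact that the relevant $z$ lie in $\overline V_{\tilde\delta}(F)\setminus F$, whose measure tends to $0$ for compact $F$, with general $F$ handled afterwards by inner regularity. You instead run a direct Markov--Tonelli estimate, change variables in the \emph{second slot} so that $E$ enters only through $\L(E)$, and reduce everything to the $L^1$-continuity statement $\L(F\setminus\phi_y^{-1}(F))\to 0$ as $y\to y_0$, proved by approximation of $\chi_F$ by continuous compactly supported functions. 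This bypasses Lemma~\ref{inverse} entirely and replaces the compact-neighbourhood estimate by an $L^1$-approximation that works for any finite-measure $F$ in one pass; your diagnosis that the Fubini exchange is what defeats the thin-$E$ problem is exactly right, and it is the same device the paper uses, applied in the preimage rather than the image variable. Two points you should still write out: (i) measurability — your Markov and Tonelli steps need $x\mapsto\L(F\cap W_x(E))$ measurable, equivalently joint Lebesgue measurability of $(x,y)\mapsto\chi_{F^c}(W(x,y))$, which holds because $D_1W$ is everywhere invertible so $W$ is a submersion and pulls null sets back to null sets (the paper devotes a full paragraph to this issue); and (ii) the reduction step should be phrased as in the paper's last paragraph, accepting a $(1-\varepsilon)^2$ loss when passing from a full-measure nice subset back to $F$, with the $\L(F)=\infty$ case read off by exhaustion.
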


\begin{proof}
We start by proving that $x\mapsto\L(F\cap W_x(E))$ is a Borel map. Assume
first that $F$ and $E$ are compact. Since $\L$ is a Radon
measure, we have $\L(A)=\lim_{\delta\to 0}\L(\overline V_\delta(A))$ for all
compact sets $A$. This, in turn, implies that the function $A\mapsto\L(A)$,
defined for compact sets, is upper semi-continuous. Moreover, the fact that
the map $A\mapsto A\cap E$ is upper semi-continuous for compact sets
$E\subset\R^d$ (for the definition of upper semi-continuity in this context see
\cite[p. 200]{Kechris}) implies that the map $x\mapsto\L(F\cap W_x(E))$ is
upper semi-continuous and, therefore, a Borel map.

Assume now that $F$ and $E$ are Lebesgue measurable. Since
$\mathcal L$ is inner regular, that is,
$\mathcal L(A)=\sup\{\mathcal L(C):C\subset A,\,C\text{ is compact}\}$
for all Lebesgue measurable sets $A\subset\mathbb R^d$, we may
choose increasing sequences $(F_i)_{i\in\mathbb N}$ and $(E_j)_{j\in\mathbb N}$
of compact sets such that $F_i\subset F$,
$E_j\subset E$, $\lim_{i\to\infty}\L(F_i)=\L(F)$ and
$\lim_{j\to\infty}\L(W_x(E_j))=\L(W_x(E))$ for all $x\in U$. In particular,
\[
\lim_{j\to\infty}\lim_{i\to\infty}\L(F_i\cap W_x(E_j))=\L(F\cap W_x(E))
\]
for all $x\in U$ and, therefore, the map $x\mapsto\L(F\cap W_x(E))$ is Borel
measurable. It follows that all the sets we encounter in the proof below
are Lebesgue measurable.

First we prove \eqref{prodclaim} for compact sets $F$. Clearly,
we may assume that $\L(F)>0$. Note that \eqref{prodclaim} is equivalent to
\begin{equation}\label{smallset}
\L\bigl(\bigl\{x\in F:\L(F^c\cap W_x(E))>\varepsilon\L(W_x(E))\bigr\}\bigr)
      <\varepsilon\L(F),
\end{equation}
where the complement of a set $A$ is denoted by $A^c$.
Now  suppose that  \eqref{prodclaim} is not true. Then there exists $\varepsilon>0$ such
that for all $\delta>0$, there is a measurable set $E\subset B(y_0,\delta)$
with $\L(E)>0$ satisfying $\L(\Lambda)\ge\varepsilon\L(F)$, where
\[
\Lambda:=\{x\in F:\L(F^c\cap W_x(E))>\varepsilon\L(W_x(E))\}.
\]
Suppose that $z\in W_x(E)$. Since $W_x(y_0)=x$ for all $x\in U$, we have
$|z-x|\le C_2\delta=:\tilde\delta$. Denoting the
characteristic function of a set $A$ by $\chi_A$, we obtain by Fubini's theorem
that
\begin{equation}\label{complement}
\begin{split}
\int_\Lambda\L(F^c\cap W_x(E))\,d\L(x)&\le\int_F\L(F^c\cap W_x(E))\,d\L(x)\\
&=\iint\chi_F(x)\chi_{W_x(E)}(z)\chi_{F^c}(z)\,d\L(z)d\L(x)\\
&=\iint\chi_F(x)\chi_{W_x(E)}(z)\chi_{\overline V_{\tilde\delta}(F)\setminus F}(z)\,
  d\L(z)d\L(x)\\
&=\int_{\overline V_{\tilde\delta}(F)\setminus F}\int_F\chi_{W_x(E)}(z)\,d\L(x)d\L(z).
\end{split}
\end{equation}
From Lemma~\ref{inverse} we deduce that $z\in W_x(E)$ if and only if
$x\in X_{z}(E\cap V_{z})$. Furthermore, $\L(X_{z}(E\cap V_{z}))\le C^{2d}\L(E)$ by
\eqref{singfinal}. Thus
\begin{equation}\label{upper}
\int_\Lambda\L(F^c\cap W_x(E))\,d\L(x)
  \le C^{2d}\L(E)\L(\overline V_{\tilde\delta}(F)\setminus F).
\end{equation}

On the other hand, since $W_x$ is a uniform diffeomorphism,
$\L(W_x(E))\ge C^{-d}\L(E)$ for all $x\in U$. Combining this with the
definition of $\Lambda$, inequality \eqref{upper} and
the fact that $\L(\Lambda)\ge\varepsilon\L(F)$, we obtain
\begin{equation}\label{lower}
\begin{split}
&\int_\Lambda\L(F^c\cap W_x(E))\,d\L(x)
  \ge\varepsilon\int_\Lambda\L(W_x(E))\,d\L(x)\\
&=\varepsilon\int_\Lambda\int\chi_{W_x(E)}({z})\,d\L({z})d\L(x)
  \ge\varepsilon\int_\Lambda\int\chi_{W_x(E)}({z})\chi_F({z})\,d\L({z})d\L(x)\\
&=\varepsilon\int_\Lambda\int\chi_{W_x(E)}({z})\,d\L({z})d\L(x)-
  \varepsilon\int_\Lambda\int\chi_{W_x(E)}({z})\chi_{F^c}({z})\,d\L({z})d\L(x)\\
&\ge C^{-d}\varepsilon\L(E)\L(\Lambda)-
  \varepsilon\int_\Lambda\int\chi_{W_x(E)}({z})\chi_{F^c}({z})\,d\L({z})d\L(x)\\
&\ge\varepsilon\L(E)\bigl(C^{-d}\varepsilon\L(F)-
  C^{2d}\L(\overline V_{\tilde\delta}(F)\setminus F)\bigr).
\end{split}
\end{equation}
Since $F$ is compact, $\L(F)=\lim_{i\to\infty}\L(\overline V_{\frac 1i}(F))$ and,
therefore, for every $\tilde\varepsilon>0$, there is $\delta>0$
such that
$\L(\overline V_{\tilde\delta}(F)\setminus F)<\tilde\varepsilon\L(F)$.
Hence, \eqref{lower} contradicts  \eqref{upper},
completing the proof of \eqref{prodclaim} for compact sets $F$.

For a Lebesgue measurable set $F$, choose a compact set $K\subset F$
satisfying
$\L(K)\ge (1-\varepsilon)\L(F)$. Then
\begin{align*}
&\L\bigl(\bigl\{x\in F:\L(F\cap W_x(E))\ge (1-\varepsilon)^2\L(W_x(E))\bigr\}
   \bigr)\\
&\ge\L\bigl(\bigl\{x\in K:\L(K\cap W_x(E))\ge (1-\varepsilon)\L(W_x(E))\bigr\}
   \bigr)\\
&\ge (1-\varepsilon)\L(K)\ge(1-\varepsilon)^2\L(F),
\end{align*}
completing the proof of \eqref{prodclaim}.
\end{proof}

The last lemma of this section is a counterpart of Lemma~\ref{prod} for
energies of sets.

\begin{lemma}\label{intestimate}
Let $U\subset\R^d$, $r_0>0$, $y_0\in\R^d$ and $W:U\times\R^d\to\R^d$ be as in
Definition~\ref{bidiffeo}. Assume that $W_x(y_0)=x$ for all $x\in U$. Let
$A_1,A_2\subset U$ be bounded Lebesgue measurable sets and let
$0\le t<d$. Then for every $\e>0$, there exists
$\delta_1=\delta_1(A_1,A_2,\varepsilon)>0$ such that
\[
\begin{split}
\iint_{A_1\times A_2}&\iint_{W_{x_1}(E_1)\times W_{x_2}(E_2)}|u_1-u_2|^{-t}\,
  d\L(u_1)d\L(u_2)d\L(x_1)d\L(x_2)\\
&\le(1+\e)\iint_{A_1\times A_2}\L(W_{x_1}(E_1))\L(W_{x_2}(E_2))|x_1-x_2|^{-t}\,d\L(x_1)
  d\L(x_2),
\end{split}
\]
provided that $E_1,E_2\subset B(y_0,\delta_1)$ are Lebesgue measurable.
\end{lemma}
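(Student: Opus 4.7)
The plan is to split the $(x_1,x_2)$-integration in the left-hand side according to whether $|x_1-x_2|$ is comparable to, or much larger than, $\delta_1$. Fix a parameter $M=M(\e,C,t)>0$ (to be chosen large) and let
$\mathcal F:=\{(x_1,x_2)\in A_1\times A_2:|x_1-x_2|\ge M\delta_1\}$ and $\mathcal N:=(A_1\times A_2)\setminus\mathcal F$. Since $W_x(y_0)=x$ and $\|D_2W\|\le C$ by \eqref{bidiffeobound}, one has $W_{x_i}(E_i)\subset B(x_i,C\delta_1)$ whenever $E_i\subset B(y_0,\delta_1)$, so every $u_i\in W_{x_i}(E_i)$ satisfies $|u_i-x_i|\le C\delta_1$.

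\textbf{Far region.} For $(x_1,x_2)\in\mathcal F$ the triangle inequality gives $|u_1-u_2|\ge |x_1-x_2|-2C\delta_1\ge (1-2C/M)|x_1-x_2|$ for every $u_i\in W_{x_i}(E_i)$. I would first pick $M$ so large that $(1-2C/M)^{-t}\le 1+\e/2$; the pointwise bound $|u_1-u_2|^{-t}\le(1+\e/2)|x_1-x_2|^{-t}$ then integrates to show that the contribution from $\mathcal F$ to the LHS is at most $(1+\e/2)$ times the RHS of the claim.

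\textbf{Near region.} Here the pointwise comparison fails since $|u_1-u_2|^{-t}$ may be much larger than $|x_1-x_2|^{-t}$ near the diagonal, so I would instead control the $\mathcal N$-contribution globally. After swapping the order of integration and applying Lemma~\ref{inverse}, each slice $\{x_i\in A_i:u_i\in W_{x_i}(E_i)\}=A_i\cap X_{u_i}(E_i\cap V_{u_i})$ has Lebesgue measure at most $C^{2d}\L(E_i)$; moreover $u_i\in\overline V_{C\delta_1}(A_i)$ and $|u_1-u_2|\le(M+2C)\delta_1$ for the surviving pairs. The $\mathcal N$-contribution is therefore bounded by
\[
C^{4d}\L(E_1)\L(E_2)\int_{\overline V_{C\delta_1}(A_1)}\!\!\int_{B(u_1,(M+2C)\delta_1)}|u_1-u_2|^{-t}\,d\L(u_2)\,d\L(u_1).
\]
Because $t<d$, the inner integral is $O(\delta_1^{d-t})$ uniformly in $u_1$, and $\L(\overline V_{C\delta_1}(A_1))$ stays bounded for $\delta_1\in(0,1]$, giving a bound $C'(A_1,M)\delta_1^{d-t}\L(E_1)\L(E_2)$.

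\textbf{Comparison.} If $\L(A_i)=0$ for some $i$, both sides of the claim vanish, so we may assume $\L(A_1),\L(A_2)>0$. Since $W_{x_i}^{-1}$ is $C$-Lipschitz by \eqref{bidiffeobound}, one has $\L(W_{x_i}(E_i))\ge C^{-d}\L(E_i)$, so the RHS is bounded below by $C^{-2d}\L(E_1)\L(E_2)\iint_{A_1\times A_2}|x_1-x_2|^{-t}d\L(x_1)d\L(x_2)$. This double integral is strictly positive (as $\L(A_i)>0$) and finite (as $A_i$ are bounded and $t<d$). With $M$ already fixed, I would then choose $\delta_1=\delta_1(A_1,A_2,\e)>0$ so small that the bound on the $\mathcal N$-contribution is at most $(\e/2)$ times the RHS; adding this to the $\mathcal F$-estimate yields LHS$\le(1+\e)$RHS. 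The main obstacle is precisely the near region, where the singularity of $|u_1-u_2|^{-t}$ precludes any pointwise comparison with $|x_1-x_2|^{-t}$; the resolution is to abandon pointwise bounds there and replace them with the global $O(\delta_1^{d-t})$ estimate afforded by $t<d$, with Lemma~\ref{inverse} used only to keep the $x$-slice measures under control.
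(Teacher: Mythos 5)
Your proof is correct, but the mechanism you use to tame the near‑diagonal region differs from the paper's. You decompose at the shrinking threshold $|x_1-x_2|\ge M\delta_1$ (with $M$ fixed large in terms of $C,t,\e$), dispose of the far region exactly as the paper does via the triangle inequality and the power comparison, and then control the near region by Fubini: swap the order of integration, use Lemma~\ref{inverse} to bound each $x_i$-slice $\{x_i\in A_i:u_i\in W_{x_i}(E_i)\}$ by $C^{2d}\L(E_i)$, and invoke the explicit decay $\int_{B(0,r)}|v|^{-t}\,d\L(v)=O(r^{d-t})$ to get an $O(\delta_1^{d-t})\L(E_1)\L(E_2)$ bound that is eventually absorbed by $(\e/2)$ times the right-hand side. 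The paper instead fixes a threshold $\delta$ \emph{independent of} $\delta_1$, chosen so that the tail $\iint_{D(\delta)}|u_1-u_2|^{-t}$ over the near-diagonal of $B(0,R)^2$ is $\tilde\e$-small relative to $I_t$ over $A_1\times A_2$ (absolute continuity of the energy integral), then takes $\delta_1\ll\delta$ and estimates the near region by two successive changes of variables (with Jacobian bounds $C^d$) that map it into $D(\delta)$. What the paper's argument buys is independence from the explicit rate $\delta_1^{d-t}$ — the smallness comes purely from the vanishing tail, which is structurally closer in spirit to the gauge-function generalisation discussed in Section~\ref{finalremarks}. What your argument buys is transparency: the slice bound from Lemma~\ref{inverse} plus a single elementary local integral of $|v|^{-t}$ replaces the double change of variables and the auxiliary $D(\delta)$ estimate, and makes the role of the hypothesis $t<d$ completely explicit. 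Both are valid; one minor point worth flagging in a clean write-up is that you should also require $\delta_1<r_0$ so that $E_i\subset B(y_0,r_0)$ and the uniform-diffeomorphism bounds apply, but this is harmless.
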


\begin{proof}
Clearly, we may assume that $\L(A_1)>0$ and $\L(A_2)>0$. Let $R>1$ be such that
$A_1, A_2\subset B(0,R-1)$. Then
\[
0<\iint_{A_1\times A_2}\vert u_1-u_2\vert^{-t}\,d\L(u_1)d\L(u_2)
\le\iint_{B(0,R)\times B(0,R)}\vert u_1-u_2\vert^{-t}\,d\L(u_1)d\L(u_2)<\infty.
\]
It follows
that for every $\tilde\e>0$, there exists $\delta\in\R$ with $0<\delta<1$ such
that
\begin{equation}\label{defdelta}
\iint_{D(\delta)}|u_1-u_2|^{-t}\,d\L(u_1)d\L(u_2)
\le\tilde\e\iint_{A_1\times A_2}\vert u_1-u_2\vert^{-t}\,d\L(u_1)d\L(u_2),
\end{equation}
where $D(\delta):=\{(u_1,u_2)\in B(0,R)\times B(0,R):|u_1-u_2|\le\delta\}$.
Consider $0<\tilde\varepsilon<\frac 12$ and let $\delta>0$ be such that
\eqref{defdelta} is valid. Defining
$\delta_1:=\frac 14C^{-1}\tilde\e\delta$, gives
$\diam W_x(B(y_0,\delta_1))<\frac 12\tilde\e\delta$ for all $x\in U$.

Let $E_1$ and $E_2$ be Lebesgue measurable subsets of $B(y_0,\delta_1)$. Recall
that $W_x(y_0)=x$ for all $x\in U$. Thus, if
$u_i\in W_{x_i}(E_i)$ for $i=1,2$ and $|x_1-x_2|>\frac 12\delta$, we have
$|u_1-u_2|>(1-2\tilde\e)|x_1-x_2|$ and, therefore,
\begin{equation}\label{bigdistance}
\begin{split}
&\int_{\{(x_1,x_2)\in A_1\times A_2\;:\;|x_1-x_2|>\frac 12\delta\}}
  \iint_{W_{x_1}(E_1)\times W_{x_2}(E_2)}|u_1-u_2|^{-t}\,d\L(u_1)d\L(u_2)d\L(x_1)d\L(x_2)\\
&\phantom{u}\le (1-2\tilde\e)^{-t}\int_{\{(x_1,x_2)\in A_1\times A_2\,:\,
  |x_1-x_2|>\frac 12\delta\}}|x_1-x_2|^{-t}\\
&\phantom{eeeeeeeeeeeeeeeee}\times\L(W_{x_1}(E_1))
  \L(W_{x_2}(E_2))\,d\L(x_1)d\L(x_2)\\
&\phantom{u}\le (1-2\tilde\e)^{-t}\iint_{A_1\times A_2}\L(W_{x_1}(E_1))\L(W_{x_2}(E_2))
  |x_1-x_2|^{-t}\,d\L(x_1)d\L(x_2).
\end{split}
\end{equation}

To estimate the remaining part of the integral, we make
the change of variables $u_i=W_{x_i}(\tilde u_i)=W(x_i,\tilde u_i)$ for $i=1,2$.
The Jacobians of these coordinate transformations are bounded from above by
$C^d$. By Fubini's theorem,
\[
\begin{split}
&\int_{\{(x_1,x_2)\in A_1\times A_2\,:\,|x_1-x_2|\le\frac 12\delta\}}
   \iint_{W_{x_1}(E_1)\times W_{x_2}(E_2)}|u_1-u_2|^{-t}\,d\L(u_1)d\L(u_2)
   d\L(x_1)d\L(x_2)\\
&\phantom{a}\le C^{2d}\iint_{E_1\times E_2}
   \int_{\{(x_1,x_2)\in A_1\times A_2\,:\,|x_1-x_2|\le\frac 12\delta\}}
   |W(x_1,\tilde u_1)-W(x_2,\tilde u_2)|^{-t}\\
&\phantom{aaaaaaaaaaaaaaaaaaaaaa}\times d\L(x_1)d\L(x_2)d\L(\tilde u_1)
   d\L(\tilde u_2)=:L.
\end{split}
\]
Recall that by the choice of $\delta_1$, we have
$|W(x_1,\tilde u_1)-W(x_2,\tilde u_2)|\le\delta$ provided that
$|x_1-x_2|\le\frac 12\delta$. The fact that for $i=1,2$ we have
$|W(x_i,\tilde u_i)-W(x_i,y_0)|\le C\delta_1<1$ for all $x_i\in A_i$ and
$\tilde u_i\in E_i$ gives $W(x_i,\tilde u_i)\in B(0,R)$. Making the
change of variables $\tilde x_i=W(x_i,\tilde u_i)$ for $i=1,2$ and using the
fact that the Jacobians are bounded by $C^d$, inequality \eqref{defdelta} gives
\begin{equation}\label{smalldistance}
\begin{split}
L &\le C^{4d}\iint_{E_1\times E_2}\iint_{D(\delta)}
  |\tilde x_1-\tilde x_2|^{-t}\,d\L(\tilde x_1)d\L(\tilde x_2)d\L(\tilde u_1)
  d\L(\tilde u_2)\\
&\le C^{4d}\tilde\e\L(E_1)\L(E_2)\iint_{A_1\times A_2}
  |\tilde x_1-\tilde x_2|^{-t}\,d\L(\tilde x_1)d\L(\tilde x_2)\\
&\le C^{6d}\tilde\e\iint_{A_1\times A_2}\L(W_{x_1}(E_1))\L(W_{x_2}(E_2))
  |\tilde x_1-\tilde x_2|^{-t}\,d\L(\tilde x_1)d\L(\tilde x_2).
\end{split}
\end{equation}
Combining \eqref{bigdistance} and \eqref{smalldistance}, gives the claim.
\end{proof}

\section{Upper bound for Hausdorff dimension}\label{upperbound}

In this section, we prove   Theorem ~\ref{main}.(a) and a key equality in  Theorem ~\ref{main}.(c).  We begin with the following observation.

\begin{lemma}\label{H_inftyupperbound}
Let $(E_n)_{n\in\N}$ be a sequence of subsets of $\R^d$. Then
\[
\dimH\bigl(\limsup_{n\to\infty}E_n\bigr)\le\inf\bigl\{t\ge 0:\sum_{n=1}^\infty
  \mathcal H_\infty^t(E_n)<\infty\bigr\}.
\]
\end{lemma}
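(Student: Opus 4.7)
The plan is to proceed via a direct covering argument, exploiting the fact that a limsup set is contained in every tail union and that the Hausdorff content already provides approximate covers.

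First I would fix any $t \ge 0$ with $\sum_{n=1}^\infty \mathcal H_\infty^t(E_n) < \infty$; it suffices to show $\dimH(\limsup_n E_n) \le t$, after which taking the infimum over such $t$ yields the lemma. By the definition of $\mathcal H_\infty^t$ in \eqref{Hauscontent}, for each $n \in \N$ I can choose a countable cover $\{F_{n,k}\}_{k\in\N}$ of $E_n$ such that
\[
\sum_{k=1}^\infty (\diam F_{n,k})^t \le \mathcal H_\infty^t(E_n) + 2^{-n}.
\]
Then, for every $N \in \N$,
\[
\limsup_{n\to\infty} E_n \subset \bigcup_{n=N}^\infty \bigcup_{k=1}^\infty F_{n,k},
\]
and the total $t$-sum of this cover is at most $\sum_{n=N}^\infty \mathcal H_\infty^t(E_n) + 2^{-N+1}$, which tends to $0$ as $N \to \infty$.

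The only thing left to verify is that this cover is admissible at arbitrarily small scales, i.e. that $\sup_{n \ge N,\, k} \diam F_{n,k} \to 0$ as $N \to \infty$. This is automatic: each individual $(\diam F_{n,k})^t$ is dominated by the partial sum $\mathcal H_\infty^t(E_n) + 2^{-n}$, which goes to $0$ as $n \to \infty$ because the full series converges. Hence for any $\delta > 0$ I can pick $N$ so large that every $F_{n,k}$ with $n \ge N$ has diameter less than $\delta$, giving $\mathcal H^t_\delta(\limsup_n E_n) \le \sum_{n=N}^\infty \mathcal H_\infty^t(E_n) + 2^{-N+1}$. Letting $N \to \infty$ forces $\mathcal H^t(\limsup_n E_n) = 0$, so $\dimH(\limsup_n E_n) \le t$, as desired.

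There is no real obstacle here; the argument is a standard Borel--Cantelli-type covering estimate. The only mildly subtle point is the observation that convergence of $\sum_n \mathcal H_\infty^t(E_n)$ forces the individual diameters appearing in the tail cover to shrink uniformly, which is what lets one pass from Hausdorff content to genuine Hausdorff measure.
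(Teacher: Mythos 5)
Your argument is correct and is essentially the same covering argument as in the paper: both choose near-optimal covers realizing the Hausdorff contents of the tail sets $E_n$, observe that the convergence of the series forces the diameters in the tail to be uniformly small, and conclude that $\mathcal H^t(\limsup_n E_n)=0$. The paper merely packages the bookkeeping slightly differently (bounding the total tail sum by $\varepsilon^t$ so that each diameter is at most $\varepsilon$), but the idea is identical.
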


\begin{proof}
For $0\le s<\infty$ and $0<\delta<\infty$, we denote by $\mathcal H^s$ and
$\mathcal H_\delta^s$ the  $s$-dimensional Hausdorff measure and
$\delta$-measure, respectively.
Let $t>0$ with $\sum_{n=1}^\infty\mathcal H_\infty^t(E_n)<\infty$. For the
purpose of proving the
claim, it suffices to show that $\dimH\bigl(\limsup_{n\to\infty}E_n\bigr)\le t$.
In what follows, we prove a slightly stronger result that
$\mathcal H^t\bigl(\limsup_{n\to\infty}E_n\bigr)=0$.

Let $\varepsilon>0$ and let $N\in\mathbb N$ so that
$\sum_{n=N}^\infty\mathcal H_\infty^t(E_n)<\frac{\varepsilon^t}2$.
For every $n\geq N$ and $k\in\N$, we choose $U_{n,k}\subset\R^d$ such that
$\bigcup_{k=1}^\infty U_{n,k}\supset E_n$ for all $n\geq N$ and
$\sum_{n=N}^\infty\sum_{k=1}^\infty(\diam U_{n,k})^t\le\varepsilon^t$. Clearly,
$\diam U_{n,k}\leq\varepsilon$, and therefore,
\[
\mathcal H_{\varepsilon}^t\bigl(\limsup_{n\to\infty}E_n\bigr)
\le\mathcal H_{\varepsilon}^t\bigl(\bigcup_{n=N}^\infty E_n\bigr)
\le\sum_{n=N}^\infty\sum_{k=1}^\infty(\diam U_{n,k})^t\le\varepsilon^t.
\]
As $\varepsilon$ can be arbitrarily small, we have
$\mathcal H^t(\limsup_{n\to\infty}E_n)=0$, which completes the proof.
\end{proof}

\begin{proof}[Proof of Theorem ~\ref{main}.(a)]
The inequality  $\dimH\bE(\x,\,\bA)\le t_0(\bA)$  follows directly from Lemma  \ref{H_inftyupperbound}, using a simple
observation that $\mathcal H^t_\infty(f(x,E))\leq (C_u)^t\mathcal H^t_\infty(E)$ for all $x\in  U$  and $E \subset V$, where $C_u$
is the constant appearing in \eqref{diffeobound}.
\end{proof}

The rest of this section is devoted to proving that $s_0(\bA)=t_0(\bA)$ under
the assumptions of Theorem~\ref{main}.(c), where $s_0(\bA)$ and $t_0(\bA)$ are
as in \eqref{defs0} and \eqref{deft0}, respectively. We start by proving that
$s_0(\bA)\le t_0(\bA)$.

\begin{lemma}\label{HgeG}
Let $E\subset\R^d$ be a Lebesgue measurable set. For all
$t\ge 0$, we have $\mathcal H_\infty^t(E)\ge G_t(E)$. In particular,
for every sequence $\bA:=(A_n)_{n\in\mathbb N}$ of Lebesgue measurable subsets of
$\R^d$, we have $s_0(\bA)\le t_0(\bA)$.
\end{lemma}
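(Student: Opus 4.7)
The plan is to reduce everything to the pointwise inequality $\mathcal H_\infty^t(E) \geq g_t(F)$ for every Lebesgue measurable $F \subset E$ with $\L(F) > 0$. Taking the supremum over such $F$ yields $\mathcal H_\infty^t(E) \geq G_t(E)$, and summing over $n$ gives $\sum_n G_t(A_n) \leq \sum_n \mathcal H_\infty^t(A_n)$, from which the inequality $s_0(\bA) \leq t_0(\bA)$ will follow.

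The core step is a Cauchy--Schwarz argument applied to a disjointified cover. Given an arbitrary countable cover $E \subset \bigcup_n U_n$, I would pass to the measurable partition $V_n := (U_n \cap F) \setminus \bigcup_{k<n} U_k$ of $F$. These sets are pairwise disjoint, satisfy $V_n \subset U_n$, $\diam V_n \leq \diam U_n$, and $\sum_n \L(V_n) = \L(F)$. Cauchy--Schwarz applied with the weights $\sqrt{(\diam U_n)^t}$ and $\L(V_n)/\sqrt{(\diam U_n)^t}$ then gives
\[
\L(F)^2 \;\leq\; \Bigl(\sum_n (\diam U_n)^t\Bigr) \Bigl(\sum_n \frac{\L(V_n)^2}{(\diam U_n)^t}\Bigr).
\]
To control the second factor I would use the pointwise lower bound $|x-y|^{-t} \geq (\diam V_n)^{-t} \geq (\diam U_n)^{-t}$ valid on $V_n \times V_n$, which yields $(\diam U_n)^{-t}\L(V_n)^2 \leq \iint_{V_n \times V_n} |x-y|^{-t}\,d\L(x)\,d\L(y)$. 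Pairwise disjointness of the $V_n$ inside $F$ then gives $\sum_n (\diam U_n)^{-t}\L(V_n)^2 \leq I_t(F)$, and infimising over covers of $E$ produces $\L(F)^2 \leq \mathcal H_\infty^t(E)\cdot I_t(F)$, i.e. $g_t(F) \leq \mathcal H_\infty^t(E)$.

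For the sequence statement, the pointwise inequality just proved gives the containment $\{t : \sum_n G_t(A_n) = \infty\} \subset \{t : \sum_n \mathcal H_\infty^t(A_n) = \infty\}$. Combined with the elementary comparison $\mathcal H_\infty^{t'}(A) \leq (\diam A)^{t'-t}\,\mathcal H_\infty^t(A)$ for $t' \geq t$ and a uniform diameter bound on the $A_n$ (which is the setting of the theorem where $A_n \subset \Delta$), the set $\{t : \sum_n \mathcal H_\infty^t(A_n) < \infty\}$ becomes an up-set in $[0,d]$, and we obtain $s_0(\bA) \leq t_0(\bA)$.

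The main obstacle is only the bookkeeping involved in the disjointification and checking measurability of the $V_n$, both of which are routine. The substantive idea is simply the observation that Cauchy--Schwarz naturally pairs the geometric quantity $(\diam U_n)^t$ appearing in the Hausdorff content with the energy-like quantity $(\diam U_n)^{-t}\L(V_n)^2$, and that the latter is exactly what is controlled by $I_t(F)$ through the trivial bound $|x-y|^{-t} \geq (\diam V_n)^{-t}$ on $V_n \times V_n$.
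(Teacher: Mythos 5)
Your argument is correct and is essentially the paper's own proof: the same Cauchy--Schwarz pairing of $(\diam U_n)^t$ against $(\diam U_n)^{-t}\L(V_n)^2$ over a disjointified cover, with the trivial bound $|x-y|^{-t}\ge(\diam V_n)^{-t}$ on $V_n\times V_n$ and disjointness giving $\sum_n\iint_{V_n\times V_n}\le I_t(F)$ (the paper applies it to $E$ itself and then invokes monotonicity of $\mathcal H_\infty^t$, which is equivalent). Your explicit monotonicity step for $s_0(\bA)\le t_0(\bA)$ is more careful than the paper's one-line assertion; the uniform diameter bound you invoke can even be avoided via $\mathcal H_\infty^{s}(A)\le\mathcal H_\infty^{t}(A)^{s/t}$ for $0<t\le s$, but as stated it covers the setting $A_n\subset\Delta$ in which the lemma is used.
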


\begin{proof}
We may assume that $\L(E)>0$ and $\mathcal H^t_\infty(E)<\infty$.
Let $\varepsilon>0$. For every $n\in\N$, we choose disjoint Borel
sets $E_n$ such that $\bigcup_{n=1}^\infty E_n\supset E$ and
$\sum_{n=1}^\infty(\diam E_n)^t<\mathcal H^t_\infty(E)+\varepsilon$. Notice that
for all $n\in\mathbb N$,
\[
I_t(E\cap E_n)=\iint_{(E\cap E_n)\times (E\cap E_n)}|x-y|^{-t}\, d\L(x)d\L(y)\ge
 (\diam E_n)^{-t}\L(A\cap E_n)^2.
\]
It follows that
\begin{equation}\label{Itestimate}
I_t(E)\ge\sum_{n=1}^\infty I_t(E\cap E_n)\ge\sum_{n=1}^\infty(\diam E_n)^{-t}
  \L(E\cap E_n)^2.
\end{equation}
From \eqref{Itestimate} and Cauchy-Schwarz inequality, we obtain
\begin{align*}
\bigl(\sum_{n=1}^\infty(\diam E_n)^t\bigr)I_t(E) &\ge\bigl(\sum_{n=1}^\infty
 (\diam E_n)^t\bigr)\bigl(\sum_{n=1}^\infty(\diam E_n)^{-t}\L(E\cap E_n)^2\bigr)\\
 &\ge\bigl(\sum_{n=1}^\infty\L(E\cap E_n)\bigr)^2=\L(E)^2,
\end{align*}
which implies that $\sum_{n=1}^\infty(\diam E_n)^t\ge g_t(E)$ (see \eqref{gtdef}).
Hence,
$\mathcal H_\infty^t(E)+\varepsilon>g_t(E)$. Letting $\varepsilon$ tend to zero,
gives $\mathcal H_\infty^t(E)\ge g_t(E)$. As $\mathcal H_\infty^t(\cdot)$
is a monotone increasing function, we conclude that
$\mathcal H_\infty^t(E)\geq G_t(E)$. According to this inequality, we have
$s_0(\bA)\leq t_0(\bA)$ for every sequence $\bA$
of Lebesgue measurable subsets of $\R^d$.
\end{proof}

\begin{remark}\label{contentbound}
The  following extension of Lemma~\ref{HgeG} can be proven with the same
argument: if $\mu$ is a finite Borel measure supported on
$E$ and $t\geq 0$, we have
\[
\mathcal H_\infty^t(E)\ge\frac{\mu(E)^2}{\iint_{E\times E}|x-y|^{-t}\,
  d\mu(x)d\mu(y)}.
\]
\end{remark}

We proceed by estimating $\mathcal H_\infty^t(E)$ from above by means of
$G_t(E)$. Our estimate is based on a technical result stated in
Proposition~\ref{thm:energy_lemma}. In what follows, the restriction of
a measure
$\mu$ to a set $E\subset\mathbb R^d$ is denoted by $\mu\vert_E$, that is,
$\mu\vert_E(F):=\mu(E\cap F)$ for all $F\subset\mathbb R^d$.
For a Radon measure $\mu$ on $\mathbb R^d$ and $0<s<d$, let
\[
I_s(\mu):=\iint\vert x-y\vert^{-s}\,d\mu(x)d\mu(y)
\]
be the $s$-energy of $\mu$. Given a Borel set $E\subset\R^d$, let $\CP(E)$
be the space of Borel probability measures supported on $E$, and let $E^+$ be
the set of points in $E$ having positive Lebesgue density, that is,
\[
E^+:=\bigl\{x\in E:\; \liminf_{r\to 0}\frac{\L(E\cap B(x, r))}{\L(B(x,r))}>0
\bigr\}.
\]
We denote by $\overline E$ the closure of a set $E\subset\R^d$, by
$\overline B(0,1)\subset\R^d$ the closed unit ball centred at the
origin and by $\mathcal C(\overline B(0,1))$ the family of continuous maps from
$\overline B(0,1)$ to $\R$.

We continue by verifying several elementary lemmas.

\begin{lemma}\label{lemma-1}
Letting $s>0$, the mapping $\eta\mapsto I_s(\eta)$ is lower semi-continuous
on $\CP(\overline B(0,1))$.
\end{lemma}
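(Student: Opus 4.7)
The plan is to present $I_s$ as a supremum of continuous functionals on $\CP(\overline B(0,1))$ equipped with the weak-$*$ topology, so that lower semi-continuity follows automatically. For each $M\in\N$, I will introduce the truncated kernel
\[
\phi_M(x,y):=\min(|x-y|^{-s},M),
\]
which is continuous (and bounded by $M$) on $\overline B(0,1)\times\overline B(0,1)$, since the singularity on the diagonal has been cut off. Setting
\[
J_M(\eta):=\iint\phi_M(x,y)\,d\eta(x)d\eta(y),
\]
one has $\phi_M(x,y)\nearrow|x-y|^{-s}$ pointwise as $M\to\infty$, so monotone convergence yields the representation
\[
I_s(\eta)=\sup_{M\in\N}J_M(\eta).
\]

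The main task is then to verify that each $J_M$ is continuous on $\CP(\overline B(0,1))$. Suppose $\eta_n\to\eta$ weakly-$*$ in $\CP(\overline B(0,1))$. I would first check that $\eta_n\otimes\eta_n\to\eta\otimes\eta$ weakly-$*$ on the compact product $\overline B(0,1)\times\overline B(0,1)$. On a product function this is immediate, since
\[
\int f(x)g(y)\,d(\eta_n\otimes\eta_n)=\int f\,d\eta_n\int g\,d\eta_n\longrightarrow\int f\,d\eta\int g\,d\eta
\]
for all $f,g\in\mathcal C(\overline B(0,1))$, and it extends to an arbitrary test function in $\mathcal C(\overline B(0,1)\times\overline B(0,1))$ via the Stone--Weierstrass theorem, as linear combinations of such product functions are uniformly dense. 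Applying this with the continuous bounded function $\phi_M$ gives $J_M(\eta_n)\to J_M(\eta)$, so $J_M$ is continuous on $\CP(\overline B(0,1))$. Since $I_s$ is the pointwise supremum of the family $\{J_M\}_{M\in\N}$ of continuous real-valued functions, it is lower semi-continuous, which is the desired conclusion.

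I do not anticipate a genuine obstacle: the argument is the standard truncate-and-pass-to-the-limit device for dealing with singular kernels, and the only mildly delicate point---weak-$*$ continuity of the map $\eta\mapsto\eta\otimes\eta$---is a textbook consequence of Stone--Weierstrass on the compact product space. Nothing in the argument is specific to $\overline B(0,1)$: the same reasoning shows that $\eta\mapsto I_s(\eta)$ is lower semi-continuous on $\CP(K)$ for any compact $K\subset\R^d$, a remark which may be useful later in the paper when the ambient compact set is $\Delta$ rather than $\overline B(0,1)$.
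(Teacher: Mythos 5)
Your proof is correct and is exactly the standard argument behind the fact the paper invokes: the paper simply cites the well-known result (Landkof, (1.4.5)) that the energy with a non-negative lower semi-continuous kernel is weak-$*$ lower semi-continuous, and your truncation of the kernel, continuity of each truncated functional via weak-$*$ convergence of $\eta_n\otimes\eta_n$, and passage to the supremum is precisely how that result is proved. No gaps; the approach is essentially the same.
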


\begin{proof}
The result is well known (see for example \cite[(1.4.5)]{Lan72}) and follows
from the fact that the mapping $(x,y)\mapsto |x-y|^{-s}$ is non-negative and
lower semi-continuous on $\R^d\times\R^d$.
\end{proof}

\begin{lemma}\label{lemma-3}
Let $\eta\in \CP(\overline B(0,1))$. Suppose that $(F_n)_{n\in\N}$ is a sequence
of Borel subsets of $\overline B(0,1)$ satisfying $\lim_{n\to\infty}\eta(F_n)=1$.
Then $\eta_n:=\eta(F_n)^{-1}\eta|_{F_n}$ converges to $\eta$ in the weak-star
topology as $n$ tends to infinity.
Moreover, $\lim_{n\to \infty} I_s(\eta_n)=I_s(\eta)$ for all $s>0$.
\end{lemma}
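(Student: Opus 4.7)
The plan is to handle the two assertions separately. For the weak-star convergence, I would take an arbitrary $\phi\in\mathcal C(\overline B(0,1))$ and decompose
\[
\int\phi\,d\eta_n-\int\phi\,d\eta=\bigl(\eta(F_n)^{-1}-1\bigr)\int_{F_n}\phi\,d\eta-\int_{\overline B(0,1)\setminus F_n}\phi\,d\eta,
\]
obtained by writing $\int\phi\,d\eta=\int_{F_n}\phi\,d\eta+\int_{\overline B(0,1)\setminus F_n}\phi\,d\eta$. Since $\phi$ is bounded on the compact set $\overline B(0,1)$ by some $M<\infty$, the absolute value of the right-hand side is bounded by $M|\eta(F_n)^{-1}-1|+M\eta(\overline B(0,1)\setminus F_n)$, and both quantities tend to zero by the hypothesis $\eta(F_n)\to 1$.

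For the energy convergence, the lower bound is immediate: the weak-star convergence just established combined with Lemma \ref{lemma-1} yields $\liminf_{n\to\infty}I_s(\eta_n)\ge I_s(\eta)$. For the upper bound, I would exploit the direct identity
\[
I_s(\eta_n)=\eta(F_n)^{-2}\iint_{F_n\times F_n}|x-y|^{-s}\,d\eta(x)d\eta(y)\le \eta(F_n)^{-2}\,I_s(\eta),
\]
which is the key inequality of the argument. Provided $I_s(\eta)<\infty$, sending $n\to\infty$ and using $\eta(F_n)\to 1$ gives $\limsup_{n\to\infty}I_s(\eta_n)\le I_s(\eta)$, and together with the lower bound this produces equality. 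In the case $I_s(\eta)=\infty$, the inequality $I_s(\eta_n)\le\eta(F_n)^{-2}I_s(\eta)$ is vacuous, but the lower semi-continuity alone then forces $I_s(\eta_n)\to\infty=I_s(\eta)$, so this case has to be treated separately.

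There is no genuine obstacle here; the argument is routine. The only point requiring a small amount of care is splitting off the infinite-energy case, since the natural upper estimate becomes useless and one must instead fall back on Lemma \ref{lemma-1}. Otherwise the proof is just a combination of dominated-type arguments on a compact space and the semi-continuity already recorded.
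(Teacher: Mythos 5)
Your proof is correct and follows essentially the same route as the paper's: the weak-star convergence is obtained by a direct estimate (the paper invokes dominated convergence, which amounts to the same thing here), and the energy convergence uses exactly the paper's combination of lower semi-continuity from Lemma \ref{lemma-1} with the key inequality $I_s(\eta_n)\le\eta(F_n)^{-2}I_s(\eta)$. Your separate treatment of the case $I_s(\eta)=\infty$ is careful but not strictly needed, since the two bounds $\liminf_n I_s(\eta_n)\ge I_s(\eta)$ and $\limsup_n I_s(\eta_n)\le\lim_n\eta(F_n)^{-2}I_s(\eta)=I_s(\eta)$ already yield the conclusion with the usual conventions for $\infty$.
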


\begin{proof}
Letting $g\in\mathcal C(\overline B(0,1))$, it follows from Lebesgue's
dominated convergence theorem that
\[
\lim_{n\to\infty}\int g\;d\eta_n=\lim_{n\to\infty}\eta(F_n)^{-1}
  \int g\chi_{F_n}\;d\eta=\int g\;d\eta,
\]
and therefore, $\eta_n$ converges to $\eta$ in the weak-star topology.

Let $s>0$. By Lemma \ref{lemma-1}, we have
$\liminf_{n\to\infty}I_s(\eta_n)\geq I_s(\eta)$. Notice that for all $n\in\N$,
\[
I_s(\eta_n)=\eta(F_n)^{-2}\iint_{F_n\times F_n} |x-y|^{-s}\;d\eta(x)
d\eta(y)\leq\eta(F_n)^{-2} I_s(\eta),
\]
which implies that $\limsup_{n\to\infty}I_s(\eta_n)\leq I_s(\eta)$. Hence,
$\lim_{n\to\infty}I_s(\eta_n)=I_s(\eta)$, as desired.
\end{proof}

For a Borel set $F\subset\overline B(0,1)$ and $s>0$, we recall the notation
$I_s(F)=I_s(\L|_F)$ from \eqref{setenergy}. For every $k\in\N$, define
\begin{equation}
\label{e-CQ}
\CQ_k:=\{[0,2^{-k})^d+\alpha:\alpha\in 2^{-k}\Z^d\}.
\end{equation}

\begin{lemma}\label{lem-Leb}
Let $F\subset\overline B(0,1)$ be a Borel set, and let $0<s<d$. Then for every
$p\in\R$ with $0<p\leq 1$, there exists a Borel set $F_1\subset F$ so that
$\L(F_1)=p\L(F)$ and $I_s(F_1)\leq 2p^2I_s(F)$.
\end{lemma}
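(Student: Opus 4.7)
The plan is to partition $F$ with a fine dyadic grid, select from each piece a Borel subset of exactly $p$ times its Lebesgue measure, and estimate the resulting $s$-energy by separating contributions of close from well-separated pairs of cubes. The guiding heuristic is that on a product $Q\times Q'$ of two well-separated dyadic cubes the kernel $|x-y|^{-s}$ is almost constant, so the energy between a pair of $p$-fraction subsets scales like $p^2$ times the full contribution, whereas the remaining near-diagonal terms are controlled via local integrability of $|x-y|^{-s}$, available since $s<d$.

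The cases $p=1$ and $\L(F)=0$ are trivial (take $F_1=F$), so I would assume $0<p<1$ and $\L(F)>0$; since $F\subset\overline B(0,1)$ and $s<d$, one has $0<I_s(F)<\infty$. First, fix a separation parameter $L>0$ large enough that $C_L:=(1+2\sqrt d/L)^s\le 3/2$, and call a pair $(Q,Q')\in\CQ_k\times\CQ_k$ \emph{far} if $\dist(Q,Q')\ge L\cdot 2^{-k}$ and \emph{near} otherwise. Elementary comparison of extremal distances shows that for a near pair $|x-y|\le(L+2\sqrt d)2^{-k}$ throughout $Q\times Q'$, whereas for a far pair $\max_{Q\times Q'}|x-y|^{-s}\le C_L\min_{Q\times Q'}|x-y|^{-s}$. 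Local integrability of $|x-y|^{-s}$ together with dominated convergence gives
\[
D_k:=\iint_{F\times F,\,|x-y|\le(L+2\sqrt d)2^{-k}}|x-y|^{-s}\,d\L(x)\,d\L(y)\longrightarrow 0
\]
as $k\to\infty$, so I would fix $k$ (depending on $p$) with $D_k\le(p^2/2)I_s(F)$, then use non-atomicity of $\L$ to pick for each $Q\in\CQ_k$ a Borel subset $F_1(Q)\subset F\cap Q$ with $\L(F_1(Q))=p\L(F\cap Q)$, and set $F_1:=\bigcup_{Q\in\CQ_k}F_1(Q)$; this automatically satisfies $\L(F_1)=p\L(F)$.

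To bound $I_s(F_1)$ I would split the double sum over $\CQ_k\times\CQ_k$ into near and far pairs. The near part is majorised by $D_k\le(p^2/2)I_s(F)$, since $F_1\subset F$ and all contributing points lie within $(L+2\sqrt d)2^{-k}$ of each other. For a far pair $(Q,Q')$, the identity $\L(F_1(Q))\L(F_1(Q'))=p^2\L(F\cap Q)\L(F\cap Q')$ combined with the kernel ratio bound gives
\[
\iint_{F_1(Q)\times F_1(Q')}|x-y|^{-s}\,d\L(x)\,d\L(y)\le C_L p^2\iint_{F\cap Q\times F\cap Q'}|x-y|^{-s}\,d\L(x)\,d\L(y),
\]
so the far contribution sums to at most $C_Lp^2 I_s(F)\le(3/2)p^2 I_s(F)$. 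Adding the two bounds yields $I_s(F_1)\le 2p^2 I_s(F)$, as required. The one delicate point is the order of parameter choices: $L$ must be fixed first so that $C_L$ is tame independently of $k$, and only then can $k$ be taken large (depending on $p$) so that $D_k$ is a small multiple of $p^2 I_s(F)$.
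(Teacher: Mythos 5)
Your proof is correct and follows essentially the same route as the paper's: the same dyadic decomposition into near and far pairs of cubes, the same choice of the separation parameter so that the kernel ratio on far pairs is at most $3/2$, the same choice of scale so that the near-diagonal contribution is at most $\tfrac12 p^2 I_s(F)$, and the same construction of $F_1$ as a union of $p$-fraction Borel subsets of the pieces $F\cap Q$. The only cosmetic difference is that you bound the near-pair sum by the integral $D_k$ over $\{|x-y|\le (L+2\sqrt d)2^{-k}\}$ and invoke dominated convergence, whereas the paper bounds the near-pair sum directly using $I_s(F)<\infty$; these are equivalent.
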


\begin{proof}
Let $0<p\leq 1$. Write $\mu:=\L\vert_F$ and choose a large integer $\ell\in\N$
so that
\begin{equation}\label{e-u0}
(1+\tfrac{2\sqrt{d}}{\ell})^s<\tfrac32.
\end{equation}
Since $I_s(\mu)<\infty$, there is $n\in\N$ such that
\begin{equation}\label{e-u1}
 \sum_{\substack{Q,Q'\in\CQ_n \\ \dist(Q,Q')< 2^{-n}\ell}}
  \iint_{Q\times Q'}|x-y|^{-s}\,d\mu(x)d\mu(y)<\tfrac12 p^2I_s(\mu).
\end{equation}
Here $\dist(Q,Q')=\inf\{\vert x-y\vert:x\in Q\text{ and }y\in Q'\}$.
For each $Q\in\CQ_n$, construct a Borel subset $\widetilde{Q}$ of
$Q\cap F$ such that $\L(\widetilde{Q})=p\L({Q\cap F})$.  Defining
$F_1:=\bigcup_{Q\in  \CQ_n} \widetilde{Q}$, we have $F_1\subset F$
and $\L(F_1)=p\L(F)$.

We proceed by showing that $I_s(F_1)\leq 2p^2 I_s(F)$.
Set $\eta:=\L\vert_{F_1}$. Since $F_1\subset F$, inequality \eqref{e-u1} gives
\begin{equation*}\label{e-u2}
 \sum_{\substack{Q,Q'\in\CQ_n \\\dist(Q,Q')< 2^{-n}\ell}}
  \iint_{Q\times Q'}|x-y|^{-s}\,d\eta(x)d\eta(y)<\tfrac12p^2I_s(\mu).
\end{equation*}
The proof will be complete, once we show that
\begin{equation}\label{e-u3}
 \sum_{\substack{Q,Q'\in\CQ_n \\ \dist(Q,Q')\geq 2^{-n}\ell}}
  \iint_{Q\times Q'}|x-y|^{-s}\,d\eta(x)d\eta(y)\leq\tfrac32 p^2I_s(\mu).
\end{equation}
Note that if $Q,Q'\in\CQ_n$ with $\dist(Q,Q')\geq 2^{-n}\ell$,
$x\in Q$ and $y\in Q'$, we obtain
\[
\dist(Q,Q')\leq |x-y|\leq\dist(Q,Q')+2\sqrt{d}2^{-n}
\]
and, therefore, by \eqref{e-u0},
\[
\tfrac23\dist(Q,Q')^{-s}\leq |x-y|^{-s}\leq\dist(Q,Q')^{-s}.
\]
This, in turn, implies that
\begin{align*}
\iint_{Q\times Q'}&|x-y|^{-s}\,d\eta(x)d\eta(y)\leq\dist(Q,Q')^{-s}\eta(Q)
\eta(Q')\\
&=p^2\dist(Q,Q')^{-s}\mu(Q)\mu(Q')
\leq\tfrac32p^2 \iint_{Q\times Q'}|x-y|^{-s}\,d\mu(x)d\mu(y).
\end{align*}
Summing over $Q,Q'\in\CQ_n$ with $d(Q,Q')\geq 2^{-n}\ell$, we obtain
\eqref{e-u3} as desired.
\end{proof}

The following lemma is a special case of Proposition~\ref{thm:energy_lemma}.

\begin{lemma}\label{lemma-5}
Let $E\subset\overline B(0,1)$ be a Borel set with $\L(E)>0$, and let
$k,m\in\N$. Assume that $E_0\subset E$ is a non-empty Borel set such that
\begin{equation}\label{lemma-5-assump}
\frac{\L(E\cap B(x,r))}{\L(B(x,r))}>\frac{1}{k}
\end{equation}
for all $x\in E_0$ and $0<r\leq 2^{-m}$.
Let $0<s<d$ and $\mu\in\CP(E_0)$ with $I_s(\mu)<\infty$.
Then there is a sequence $(F_n)_{n\in\N}$ of Borel subsets of $E$
with positive  Lebesgue measure such that
the sequence $\mu_n:=\mathcal L(F_n)^{-1}\mathcal L\vert_{F_n}$, $n\in\N$,
converges to $\mu$ in the weak-star topology as $n$ tends to infinity, and
$\lim_{n\to\infty}I_s(\mu_n)=I_s(\mu)$.
\end{lemma}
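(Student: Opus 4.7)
The plan is to construct the sets $F_n$ in two stages: first discretise $\mu$ at scale $r_n:=2^{-n}$ via a Vitali selection in $E_0$, then fatten each atom to a Lebesgue-positive subset of $E$ of precisely prescribed size, using Lemma~\ref{lem-Leb} inside each ball to control the $s$-energy.

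For $n$ large enough that $r_n\le 2^{-m}$, I would apply the $5r$-covering lemma to produce a countable family $\{x_{n,i}\}\subset E_0$ such that the balls $B(x_{n,i},r_n)$ are pairwise disjoint and $E_0\subset\bigcup_iB(x_{n,i},5r_n)$; let $A_{n,i}:=B(x_{n,i},5r_n)\setminus\bigcup_{j<i}B(x_{n,j},5r_n)$ and set $p_{n,i}:=\mu(A_{n,i})$, so that $\sum_ip_{n,i}=1$. The density hypothesis \eqref{lemma-5-assump} gives $\alpha_{n,i}:=\L(E\cap B(x_{n,i},r_n))\ge c_dk^{-1}r_n^d$; taking $t_n:=\tfrac12c_dk^{-1}r_n^d$ and $p^*_{n,i}:=t_np_{n,i}/\alpha_{n,i}\in(0,\tfrac12]$, Lemma~\ref{lem-Leb} applied to $E\cap B(x_{n,i},r_n)$ yields $F_{n,i}\subset E\cap B(x_{n,i},r_n)$ with $\L(F_{n,i})=t_np_{n,i}$ and $I_s(F_{n,i})\le 2(p^*_{n,i})^2 I_s(E\cap B(x_{n,i},r_n))$. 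Setting $F_n:=\bigsqcup_iF_{n,i}$ (a disjoint union by disjointness of the balls), the probability measure $\mu_n:=\L(F_n)^{-1}\L|_{F_n}$ equals $\sum_ip_{n,i}\nu_{n,i}$ with $\nu_{n,i}$ the uniform probability on $F_{n,i}$.

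Weak-star convergence will follow at once from the weak convergence $\sum_ip_{n,i}\delta_{x_{n,i}}\to\mu$ together with the fact that each $\nu_{n,i}$ is supported within $r_n$ of $x_{n,i}$. Lemma~\ref{lemma-1} supplies $\liminf_nI_s(\mu_n)\ge I_s(\mu)$, so the task reduces to $\limsup_nI_s(\mu_n)\le I_s(\mu)$. Expanding $I_s(\mu_n)$ into diagonal ($i=j$) and off-diagonal ($i\ne j$) parts, the self-energy bound combined with $I_s(B(0,r_n))\sim r_n^{2d-s}$ gives $I_s(\nu_{n,i})\lesssim k^2r_n^{-s}$; the diagonal is therefore at most $Ck^2r_n^{-s}\sum_ip_{n,i}^2$, and the disjointness of the $A_{n,i}$ yields $\sum_ip_{n,i}^2\le(\mu\times\mu)(\{|x-y|\le 10r_n\})\le(10r_n)^s\iint_{|x-y|\le 10r_n}|x-y|^{-s}\,d\mu d\mu=o(r_n^s)$ by $I_s(\mu)<\infty$, so the diagonal vanishes. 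The off-diagonal will be split at a threshold $\delta>0$: the far piece ($|x_{n,i}-x_{n,j}|>\delta$) converges to $\iint_{|x-y|\ge\delta}|x-y|^{-s}\,d\mu d\mu$ by the weak-star convergence of $\mu_n\otimes\mu_n$ on the continuity set $\{|x-y|>\delta\}$, and letting $\delta\to 0$ afterwards reproduces $I_s(\mu)$.

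The hard part will be the near off-diagonal contribution coming from pairs $(i,j)$ with $|x_{n,i}-x_{n,j}|$ of order $r_n$: naive bounds overcount these cross-energies and can diverge when summed over all near pairs. The remedy exploits the fact that the construction in Lemma~\ref{lem-Leb} selects $F_{n,i}$ as a \emph{uniform} $p^*_{n,i}$-fraction of $E\cap B(x_{n,i},r_n)$ on a fine sub-dyadic scale; a cube-by-cube splitting identical in spirit to the one in the proof of Lemma~\ref{lem-Leb} upgrades the self-energy estimate into the matching cross-energy estimate
\[
\iint_{F_{n,i}\times F_{n,j}}|x-y|^{-s}\,d\L d\L\le C\, p^*_{n,i}p^*_{n,j}\iint_{(E\cap B(x_{n,i},r_n))\times(E\cap B(x_{n,j},r_n))}|x-y|^{-s}\,d\L d\L.
\]
Summing this over near pairs and using the inclusions $A_{n,i}\subset B(x_{n,i},5r_n)$ identifies the total near off-diagonal contribution with a constant multiple of $\iint_{|x-y|\le C r_n}|x-y|^{-s}\,d\mu d\mu$, which tends to $0$ by the finiteness of $I_s(\mu)$.
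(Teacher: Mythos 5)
Your overall scheme (discretise $\mu$ on a net in $E_0$, fatten each atom to a Lebesgue-positive subset of $E$ of prescribed mass via Lemma~\ref{lem-Leb}, then compare energies) is the same as the paper's, and your treatment of the weak-star convergence, the diagonal term and the far off-diagonal term is sound. The gap is exactly where you locate the hard part: the near off-diagonal cross-energies. In your construction the pieces $F_{n,i}\subset E\cap B(x_{n,i},r_n)$ sit in disjoint but possibly adjacent balls, so $\dist(F_{n,i},F_{n,j})$ can be $0$, and the claimed inequality
\[
\iint_{F_{n,i}\times F_{n,j}}|x-y|^{-s}\,d\L\,d\L\le C\,p^*_{n,i}p^*_{n,j}\iint_{(E\cap B(x_{n,i},r_n))\times(E\cap B(x_{n,j},r_n))}|x-y|^{-s}\,d\L\,d\L
\]
does \emph{not} follow from Lemma~\ref{lem-Leb}. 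That lemma is applied to each ball separately and only controls the self-energy; it allows the subsets in different balls to be built on unrelated dyadic scales and says nothing about where inside each cube the selected mass sits, so nothing prevents $F_{n,i}$ and $F_{n,j}$ from both concentrating near the common boundary of two adjacent balls, inflating the cross-energy far beyond $p^*_{n,i}p^*_{n,j}$ times the cross-energy of the ambient sets. To make your route work you would have to state and prove a genuine two-set strengthening of Lemma~\ref{lem-Leb} (a common, sufficiently fine dyadic scale for all pieces at level $n$, together with an absolute-continuity argument for the near cube-pairs of each cross-energy); that is a nontrivial addition, not a routine repetition ``identical in spirit'' to the existing proof.

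The paper sidesteps this entirely by a small but decisive change in the construction: the centres $x_{n,i}$ form a maximal $2^{-n}$-separated set, but each $F_{n,i}$ is placed inside the much smaller ball $E\cap B(x_{n,i},2^{-n-2})$ (the density hypothesis \eqref{lemma-5-assump} still applies at that radius). This forces $\dist(F_{n,i},F_{n,j})\ge 2^{-n-1}$ for $i\ne j$, so on $F_{n,i}\times F_{n,j}$ one has the elementary bound $|x-y|^{-s}\le 2^{(n+1)s}$; the near off-diagonal sum then collapses to $\sum 2^{(n+1)s}\mu(Q_{n,i})\mu(Q_{n,j})$, which by the comparability of $\diam(Q_{n,i}\cup Q_{n,j})$ with $2^{-n}$ is dominated by the part of $I_s(\mu)$ over $\{|x-y|\le 2^{-n}(\ell+8)\}$ and hence tends to $0$. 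You should adopt this separation device rather than attempt the cross-energy estimate.
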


\begin{proof} We divide the proof into three steps.

{\sl Step 1. Construction of $(\mu_n)_{n\in\mathbb N}$}. For all $n\in\N$, let
\[
\{x_{n,1},\dots,x_{n,p_n} : |x_{n,i}-x_{n,j}|\geq 2^{-n}\text{ for all }i\neq j\}
\]
be a subset of $E_0$ with maximal cardinality. Then
\[
E_0\subset\bigcup_{i=1}^{p_n} B(x_{n, i},2^{-n}).
\]
For $i=1,\ldots, p_n$, we denote by $Q_{n,i}$ the set of points
$y\in B(x_{n,i},2^{-n})$ for which $i$ is the smallest index such that
$|y-x_{n,i}|=\min_{j=1,\dots,p_n}|y-x_{n,j}|$. Then the sets
$Q_{n,i}$, $i=1,\ldots,p_n$, are pairwise disjoint Borel sets satisfying
\begin{align}
\label{cube1} E_0&\subset\bigcup_{i=1}^{p_n} Q_{n,i}=\bigcup_{i=1}^{p_n} B(x_{n, i},
2^{-n})\text{ and}\\
\label{cube2}B(x_{n,i},2^{-n-1})&\subset Q_{n,i}\subset B(x_{n,i},2^{-n})
             \text{ for all }i=1,\dots,p_n.
\end{align}
For all $i=1,\ldots,p_n$, define
\begin{equation}\label{cube0}
c_i:=\frac{\mu(Q_{n,i})}{\L(E\cap B(x_{n,i},2^{-n-2}))}
\end{equation}
and set $c:=\max_{i=1,\ldots, p_n}c_i$.
Lemma \ref{lem-Leb} implies that for every $i=1,\ldots, p_n$, we can construct
a Borel set $F_{n,i}$ such that
\begin{align}
\label{cube3'}F_{n,i}& \subset E\cap B(x_{n,i},2^{-n-2}),\\
\label{cube3} \L(F_{n,i})& =\tfrac{c_i}{c}\L\left(E\cap B(x_{n,i},2^{-n-2})\right)
\text{ and }\\
\label{cube4} I_s(F_{n,i})& \leq \tfrac{2 c_i^2}{c^2}
I_s\left(E\cap B(x_{n,i},2^{-n-2})\right).
\end{align}
By \eqref{cube3'}, the sets $F_{n,i}$, $i=1,\ldots,p_n$, are pairwise disjoint
and, moreover,
\begin{equation}\label{e-cube0}
\dist(F_{n,i},F_{n,j})\geq 2^{-n-1}\text{ for }i\neq j.
\end{equation}
We complete the construction in step 1 by setting
\[
F_n:=\bigcup_{i=1}^{p_n} F_{n, i}\quad\text{ and }
\quad\mu_n:=\L(F_n)^{-1}\mathcal L\vert_{F_n}.
\]
Observe that $\L(F_n)>0$ since $\L(B(x,r)\cap E)>0$ for all $x\in E_0$ and
$r>0$.

{\sl Step 2. Convergence of $(\mu_n)_{n\in\mathbb N}$.}
By \eqref{cube0} and \eqref{cube3},
we have $\L(F_{n,i})=c^{-1}\mu(Q_{n,i})$ for all $i=1,\ldots,p_n$. It follows
that
\begin{align}
\label{Fnmeasure}\L(F_n)&=c^{-1}\text{ and}\\
\label{e-equal}\mu_n(Q_{n,i})&=\mu_n(F_{n,i})=\mu(Q_{n,i})
\end{align}
for all $i=1,\ldots, p_n$. Let $F\subset\R^d$ be a compact set. From
\eqref{e-equal} and the fact that $\diam(Q_{n,i})\leq 2\cdot 2^{-n}$ (see
\eqref{cube2}), we conclude that for all $\e>0$, (recall
\eqref{dneighbourhood})
\[
\limsup_{n\to\infty}\mu_n(F)\le\limsup_{n\to\infty}\sum_{\substack{
  1\le i\le p_n\\ Q_{n,i}\subset F_\e}}\mu_n(Q_{n,i})\le\mu(\overline V_\e(F)).
\]
Combining this with the fact that
$\mu(F)=\lim_{\varepsilon\to 0}\mu(\overline V_\e(F))$, leads to the conclusion
$\limsup_{n\rightarrow\infty}\mu_n(F)\le\mu(F)$. The  weak-star convergence now
follows from the Portmanteau theorem \cite[Theorem 17.20]{Kechris}.

{\sl Step 3. Convergence of $(I_s(\mu_n))_{n\in\mathbb N}$.}
Since the sequence $(\mu_n)_{n\in\N}$
converges to $\mu$ in the weak-star topology, Lemma \ref{lemma-1} gives
$\liminf_{n\to \infty}I_s(\mu_n)\geq I_s(\mu)$. Hence, for the purpose of proving
that $\lim_{n\to\infty}I_s(\mu_n)=I_s(\mu)$, it suffices to show that for every
$\varepsilon>0$, there exists $N\in\mathbb N$ such that
\begin{equation}\label{e-N}
I_s(\mu_n)\leq (1+\varepsilon)I_s(\mu)
\end{equation}
for all $n\geq N$.
Let $\varepsilon>0$ and select $\ell\in\N$ such that
\begin{equation}\label{e-u00}
(1+\tfrac4\ell)^s<1+\tfrac\varepsilon2.
\end{equation}
Moreover, choose a large integer $N\geq m$ such that for all $n\geq N$,
\begin{equation}\label{e-T1}
\iint_{\{(x,y)\;:\; |x-y|\leq 2^{-n}(\ell+8)\}} |x-y|^{-s}\;d\mu(x)d\mu(y)
<\tfrac{\varepsilon}{4 L} I_s(\mu)
\end{equation}
where
\begin{equation*}
L:=\max\left\{2^s (\ell+8)^s, \; 2k^2 I_s(B(0,1)) \L(B(0,1))^{-2} 8^s\right\}.
\end{equation*}
Let $n\geq N$ and set $\D_n:=\{Q_{n,i}: i=1,\ldots, p_n\}$. Notice that if
$Q,Q'\in{\bf D}_n$ with $\dist(Q,Q')\geq 2^{-n}\ell$, $x\in Q$ and $y\in Q'$,
we have by \eqref{cube2} that
\[
\dist(Q,Q')\leq |x-y|\leq\dist(Q,Q')+4\cdot 2^{-n}
\]
and, therefore, by \eqref{e-u00},
\[
(1+\tfrac\varepsilon2)^{-1}\dist(Q,Q')^{-s}\leq |x-y|^{-s}\leq \dist(Q,Q')^{-s}.
\]
Combining this with \eqref{e-equal}, we conclude that
\begin{align*}
\iint_{Q\times Q'}&|x-y|^{-s}\,d\mu_n(x)d\mu_n(y)\leq\dist(Q,Q')^{-s}
  \mu_n(Q)\mu_n(Q')\\
&=\dist(Q,Q')^{-s}\mu(Q)\mu(Q')
  \leq (1+\tfrac\varepsilon2)\iint_{Q\times Q'}|x-y|^{-s}\,d\mu(x)d\mu(y).
\end{align*}
Summing over $Q, Q'\in {\bf D}_n$ with $\dist(Q,Q')\geq 2^{-n}\ell$, we obtain
that
\begin{equation*}\label{e-u5}
 \sum_{\substack{Q,Q'\in{\bf D}_n \\ \dist(Q,Q')\geq 2^{-n}\ell}}
  \iint_{Q\times Q'}|x-y|^{-s}\,d\mu_n(x)d\mu_n(y)\leq(1+\tfrac\varepsilon2)
I_s(\mu).
\end{equation*}

To complete the proof of \eqref{e-N}, it is sufficient to verify that
\begin{equation}\label{e-u6}
\sum_{\substack{Q,Q'\in{\bf D}_n \\\dist(Q,Q')< 2^{-n}\ell}}
  \iint_{Q\times Q'}|x-y|^{-s}\,d\mu_n(x)d\mu_n(y)\leq\tfrac\varepsilon2I_s(\mu).
\end{equation}
Since $\mu_n$ is supported on $F_n=\bigcup_{i=1}^{p_n}F_{n,i}$, the left-hand
side of \eqref{e-u6} is bounded above by
\begin{equation*}\label{e-u7}
 \sum_{\substack{1\leq i,j\leq p_n \\\dist(F_{n,i}, F_{n,j})< 2^{-n}(\ell+4)}}
  \iint_{F_{n,i}\times F_{n,j}}|x-y|^{-s}\,d\mu_n(x)d\mu_n(y)=:(I)+(II)
\end{equation*}
where
\begin{align*}
 (I)&:=\sum_{\substack{1\leq i,j\leq p_n:\; i\neq j \\\dist(F_{n,i}, F_{n,j})< 2^{-n}(\ell+4)}}
  \iint_{F_{n,i}\times F_{n,j}}|x-y|^{-s}\,d\mu_n(x)d\mu_n(y)\,\text{ and } \\
 (II)&:=\sum_{1\leq i\leq p_n}
  \iint_{F_{n,i}\times F_{n,i}}|x-y|^{-s}\,d\mu_n(x)d\mu_n(y).
\end{align*}
We proceed by estimating (I) and (II) separately. First we obtain
\begin{equation*}
\begin{aligned}
(I)&\leq\sum_{\substack{1\leq i,j\leq p_n:\; i\neq j \\\dist(F_{n,i}, F_{n,j}) < 2^{-n}(\ell+4)}}
2^{(n+1)s} \mu_n(F_{n,i})\mu_n(F_{n,j})&(\text{by \eqref{e-cube0}})\\
&=\sum_{\substack{1\leq i,j\leq p_n:\; i\neq j \\\dist(F_{n,i}, F_{n,j}) < 2^{-n}(\ell+4)}}
2^{(n+1)s} \mu(Q_{n,i})\mu(Q_{n,j})&(\text{by \eqref{e-equal}})\\
&\leq\sum_{\substack{Q,Q'\in{\bf D}_n \\\dist(Q,Q')< 2^{-n}(\ell+4)}} 2^{(n+1)s}
\mu(Q)\mu(Q')\\
&\leq\sum_{\substack{Q,Q'\in{\bf D}_n \\\dist(Q,Q')< 2^{-n}(\ell+4)}} 2^s (\ell+8)^s
\iint_{Q\times Q'} |x-y|^{-s} \;d\mu(x)d\mu(y)&(\text{by \eqref{cube2}})\\
&\leq  2^s (\ell+8)^s \iint_{\{ (x,y)\;:\; |x-y|\leq 2^{-n}(\ell+8)\}}
|x-y|^{-s} \;d\mu(x)d\mu(y)\\
& \leq \tfrac\varepsilon4 I_s(\mu).&(\text{by \eqref{e-T1}})
\end{aligned}
\end{equation*}
Set $\alpha:=\L(B(0,1))$ and $\beta:=I_s(B(0,1))$. Using the change of
variables $\tilde x=rx$, it is straightforward to see that
\begin{equation}\label{changevar}
I_s(B(x,r))=\mathcal L(B(x,r))^2\alpha^{-2}r^{-s}\beta
\end{equation}
for all $x\in\mathbb R^d$ and all $r>0$. Therefore,
\begin{equation*}
\begin{aligned}
(II) &=\sum_{1\le i\le p_n} c^2I_s(F_{n,i})&(\text{by \eqref{Fnmeasure}})\\
&\leq \sum_{1\le i\le p_n} 2 c_i^2I_s\left(E\cap B(x_{n,i}, 2^{-n-2})\right)
  &(\text{by \eqref{cube4}})\\
&=\sum_{1\le i\le p_n}\frac{2\mu(Q_{n,i})^2I_s\left(E\cap B(x_{n,i}, 2^{-n-2})\right)}
  {\L\bigl(E\cap B(x_{n,i}, 2^{-n-2})\bigr)^2}&(\text{by \eqref{cube0}})\\
&\leq  \sum_{1\le i\le p_n} (2k^2\alpha^{-2}\beta 8^s)  \mu(Q_{n,i})^2 2^{(n-1)s}
  &(\text{by \eqref{changevar} and \eqref{lemma-5-assump}})\\
&\leq (2k^2\alpha^{-2}\beta 8^s)\sum_{1\le i\le p_n}
  \iint_{ Q_{n,i}\times Q_{n,i}} |x-y|^{-s}\;d\mu(x)d\mu(y)
  &(\text{by \eqref{cube2}})\\
&\leq (2k^2\alpha^{-2}\beta 8^s)\iint_{\{(x,y)\;:\; |x-y|\leq 2\cdot 2^{-n}\}}
  |x-y|^{-s} \;d\mu(x)d\mu(y)\\
&\leq\tfrac\varepsilon4 I_s(\mu). &(\text{by \eqref{e-T1}})
\end{aligned}
\end{equation*}
We conclude that $(I)+(II)\leq\tfrac\varepsilon2I_s(\mu)$, from which
\eqref{e-u6} follows. This completes the proof of Lemma \ref{lemma-5}.
\end{proof}

Now we are ready to state the main technical result of this section.

\begin{proposition}\label{thm:energy_lemma}
Let $E\subset\R^d$ be a bounded Borel set with $\mathcal L(E)>0$, and
let $0<s<d$. Assume that $\mu\in\CP(E^+)$ with $I_s(\mu)<\infty$.
Then there is a sequence $(F_n)_{n\in\N}$ of Borel subsets of $E^+$
with positive  Lebesgue measure such that
the sequence $\mu_n:=\mathcal L(F_n)^{-1}\mathcal L\vert_{F_n}$, $n\in\N$,
converges to $\mu$ in the weak-star topology as $n$ tends to infinity, and
$\lim_{n\to\infty}I_s(\mu_n)=I_s(\mu)$.
\end{proposition}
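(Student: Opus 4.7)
The plan is to reduce Proposition~\ref{thm:energy_lemma} to the special case already established in Lemma~\ref{lemma-5} by stratifying $E^+$ according to uniform lower density estimates, restricting $\mu$ to each stratum, applying the lemma there, and then extracting a diagonal sequence. Since $E$ is bounded, a harmless translation and rescaling allows me to assume $E \subset \overline B(0,1)$; neither the weak-star convergence of probability measures on $\overline B(0,1)$ nor the convergence of $s$-energies is affected.

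For each $k \in \N$, I introduce
\begin{equation*}
E_k := \Bigl\{x \in E : \frac{\L(E\cap B(x,r))}{\L(B(x,r))} > \tfrac1k \text{ for all } 0<r\le 2^{-k}\Bigr\}.
\end{equation*}
These sets are Borel and nondecreasing, and by the very definition of $E^+$ we have $E^+ = \bigcup_k E_k$, hence $\mu(E_k) \to 1$. For every $k$ with $\mu(E_k) > 0$, set $\mu^{(k)} := \mu(E_k)^{-1} \mu\vert_{E_k}$; by Lemma~\ref{lemma-3}, $\mu^{(k)} \to \mu$ in the weak-star topology with $I_s(\mu^{(k)}) \to I_s(\mu)$. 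Since $\mu^{(k)} \in \CP(E_k)$ and $I_s(\mu^{(k)}) \le \mu(E_k)^{-2} I_s(\mu) < \infty$, Lemma~\ref{lemma-5} (applied with $E_0 := E_k$, $m := k$ and density constant $1/k$) yields for each such $k$ a sequence of Borel sets $F^{(k)}_n \subset E$ of positive Lebesgue measure whose normalized restrictions $\nu^{(k)}_n := \L(F^{(k)}_n)^{-1} \L\vert_{F^{(k)}_n}$ converge weakly to $\mu^{(k)}$ with $I_s(\nu^{(k)}_n) \to I_s(\mu^{(k)})$ as $n \to \infty$.

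To conclude, I run a diagonal argument: $\CP(\overline E)$ is compact and metrizable in the weak-star topology, so fixing a compatible metric $d_*$ I can choose, for each $k$, an index $n_k$ with $d_*(\nu^{(k)}_{n_k}, \mu^{(k)}) < 1/k$ and $\bigl|I_s(\nu^{(k)}_{n_k}) - I_s(\mu^{(k)})\bigr| < 1/k$. Combined with the convergences above, the sequence $\mu_k := \nu^{(k)}_{n_k}$ then converges weakly to $\mu$ and satisfies $I_s(\mu_k) \to I_s(\mu)$. Finally, to place the supporting sets inside $E^+$, I replace each $F^{(k)}_{n_k}$ by $F^{(k)}_{n_k} \cap E^+$: since $\L(E \setminus E^+) = 0$ by the Lebesgue density theorem, neither the Lebesgue measure nor the normalized restriction measure is affected. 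The only real obstacle is routine bookkeeping—checking that $E^+$ and the sets $E_k$ are Borel, and that the lower semicontinuity of $I_s$ combined with the energy upper bounds inherited from Lemma~\ref{lemma-5} is enough to force two-sided convergence of the energies—since all the heavy energy-approximation work has already been absorbed into Lemma~\ref{lemma-5}.
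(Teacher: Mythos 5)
Your proof is correct and follows essentially the same route as the paper's: stratify $E^+$ by uniform lower-density sets, normalise $\mu$ on each stratum (Lemma~\ref{lemma-3}), apply Lemma~\ref{lemma-5} there, and diagonalise. The only cosmetic differences are that you couple the density constant and the radius scale into a single index $k$ (the paper uses a double index $E_{k,m}$ and then selects $m_k$), and that you apply Lemma~\ref{lemma-5} with ambient set $E$ and intersect with $E^+$ at the end, whereas the paper applies it with ambient set $E^+$ directly; both are valid since $\L(E\setminus E^+)=0$.
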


\begin{proof}
Without loss of generality, we may assume that $E\subset\overline B(0,1)$. For
every $k,m\in\N$, define
\[
E_{k,m}:=\bigl\{x\in E:\frac{\L(E\cap B(x,r))}{\L(B(x,r))}>\frac{1}{k}
\text{ for all }0<r\leq 2^{-m}\bigr\},
\]
and set $E_k:=\bigcup_{m=1}^\infty E_{k,m}$. Then $E_{k,m}\uparrow E_k$ as $m$
tends to
infinity and, moreover, $E_k\uparrow E^+$ as $k$ tends to infinity.
Choose sufficiently large $k_0\in\N$ such that $\mu(E_{k_0})>0$.
For every integer $k\geq k_0$, pick $m_k\in\mathbb N$ such that
\[
\mu(E_{k, m_k})\geq (1-\tfrac 1k)\mu(E_k).
\]
Since $E_k\uparrow E^+$ as $k$ tends to infinity and $\mu$ is supported on
$E^+$, we have
\[
\lim_{k\to \infty} \mu(E_{k, m_k})=1.
\]
Set $\eta_k:=\mu(E_{k, m_k})^{-1}\mu|_{E_{k, m_k}}$ for all $k\geq k_0$. From
Lemma~\ref{lemma-3}, we obtain
\begin{equation}\label{e-convergence}
\lim_{k\to \infty} \eta_k=\mu \quad \text{ and }\quad
\lim_{k\to \infty} I_s(\eta_k)=I_s(\mu).
\end{equation}

Let $k\geq k_0$. Replacing in Lemma~\ref{lemma-5} the sets $E$ and $E_0$
by $E^+$ and $E_{k, m_k}$, respectively, implies the existence of a sequence
$(F_{k,i})_{i\in\N}$ of Borel subsets of $E^+$ such that
\[
\lim_{i\to \infty}\L(F_{k,i})^{-1}\L|_{F_{k,i}}=\eta_k \quad \text{ and }\quad
\lim_{i\to \infty} I_s\bigl(\L(F_{k,i})^{-1}\L|_{F_{k,i}}\bigr)=I_s(\eta_k).
\]
Combining this with \eqref{e-convergence}, we see that there exists a sequence
$(i_k)_{k\in\N}$ of natural numbers such that
\[
\lim_{k\to \infty}\L(F_{k,i_k})^{-1}\L|_{F_{k,i_k}}=\mu \quad \text{ and }\quad
\lim_{k\to \infty} I_s\bigl(\L(F_{k,i_k})^{-1}\L|_{F_{k,i_k}}\bigr)=I_s(\mu).
\]
This completes the proof of Proposition \ref{thm:energy_lemma}.
\end{proof}

Next lemma states that every Lebesgue measurable set with positive Lebesgue
density is contained in a Borel set with positive Lebesgue density having the
same Hausdorff content as the original set.

\begin{lemma}\label{lemma-Borel}
Let $R>0$ and $s>0$. Assume that $E\subset B(0,R)$ is a Lebesgue measurable
subset of $\mathbb R^d$. Then there
exists a Borel set $X\subset B(0,R)$ such that $E\subset X$,
$\L(X\setminus E)=0$ and
$\mathcal H^s_\infty(E)=\mathcal H^s_\infty(X)$. Furthermore, under the
additional assumption $E^+=E$, we may choose $X$ so that $X^+=X$.
\end{lemma}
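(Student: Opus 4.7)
The plan is to construct $X$ in two steps: first arrange the Hausdorff content and almost-equality conditions, and then, under the additional hypothesis $E^+=E$, shrink $X$ to eliminate points of zero density without losing the other requirements.

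For the first part, I would begin by choosing, by regularity of Lebesgue measure, a Borel set $X_1$ with $E\subset X_1\subset B(0,R)$ and $\L(X_1\setminus E)=0$. Next, for each $n\in\N$, pick a countable cover $\{F_{n,k}\}_{k\in\N}$ of $E$ with $\sum_{k}(\diam F_{n,k})^s\le\mathcal H_\infty^s(E)+1/n$. Since closure preserves diameter, the $F_\sigma$-set $Y_n:=\bigcup_k\overline{F_{n,k}}$ contains $E$ and satisfies $\mathcal H_\infty^s(Y_n)\le\mathcal H_\infty^s(E)+1/n$. Setting $X:=X_1\cap\bigcap_{n\in\N}Y_n$ yields a Borel subset of $B(0,R)$ containing $E$ with $\L(X\setminus E)=0$, whose Hausdorff content is squeezed between $\mathcal H_\infty^s(E)$ (by monotonicity) and $\mathcal H_\infty^s(Y_n)$ for every $n$, forcing $\mathcal H_\infty^s(X)=\mathcal H_\infty^s(E)$.

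For the second claim, assume $E^+=E$, and take $X':=X^+$. The first task is to verify that $X^+$ is Borel. For fixed $r>0$ the function $f_r(x):=\L(X\cap B(x,r))/\L(B(x,r))$ is continuous in $x$ (via $|\L(X\cap B(x,r))-\L(X\cap B(x',r))|\le\L(B(x,r)\triangle B(x',r))\to 0$), so for each $m,n\in\N$ the set $Z_{m,n}:=\{x\in\R^d:f_r(x)\ge 1/m\text{ for all }r\in(0,1/n)\}$ is an intersection of closed superlevel sets and hence closed. Because $\liminf_{r\to 0}f_r(x)>0$ precisely when $x$ lies in some $Z_{m,n}$, the identity $X^+=X\cap\bigcup_{m,n\in\N}Z_{m,n}$ shows $X^+$ is Borel.

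The remaining properties follow almost by inspection: the hypothesis $E=E^+$ together with $\L(X\cap B(x,r))\ge\L(E\cap B(x,r))$ for all $x\in E$ gives $E\subset X^+$; since $E\subset X'\subset X$, both $\L(X'\setminus E)=0$ and $\mathcal H_\infty^s(X')=\mathcal H_\infty^s(E)$ are inherited from $X$; and the Lebesgue density theorem provides $\L(X\setminus X^+)=0$, so $\L(X^+\cap B(x,r))=\L(X\cap B(x,r))$ for every $x$ and $r$, whence $(X^+)^+=X^+$. The only step requiring genuine care is the Borel measurability of $X^+$, which hinges on realising that the infimum over the uncountable range $r\in(0,1/n)$ can be treated via continuity in $x$; the rest of the construction is essentially automatic.
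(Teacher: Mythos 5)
Your proof is correct and follows essentially the same route as the paper's: intersect a measurable hull of $E$ with a decreasing family of content-almost-optimal covers to get $X$, then pass to $X^+$ and use $\L(X\setminus X^+)=0$ to see that $(X^+)^+=X^+$. The only difference is that you spell out the Borel measurability of $X^+$ (via continuity of $x\mapsto\L(X\cap B(x,r))$ and the closed sets $Z_{m,n}$), a point the paper asserts without detail.
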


\begin{proof}
The definition of $\mathcal H^s_\infty(\cdot)$ implies that for every $n\in\N$,
there exists a sequence $(F_{n,i})_{i\in\N}$ of Borel sets satisfying
$E\subset\bigcup_{i=1}^\infty F_{n,i}$ and
$\sum_{i=1}^\infty(\diam F_{n,i})^s<\mathcal H^s_\infty(E)+\frac 1n$. Defining
$F:=\bigcap_{n=1}^\infty \bigcup_{i=1}^\infty F_{n,i}$, it is clear that
$F$ is Borel measurable, $E\subset F$ and
$\mathcal H^s_\infty(F)=\mathcal H^s_\infty(E)$.
Moreover, there exists a Borel set $A\subset B(0,R)$ such that $E\subset A$
and $\L(A)=\L(E)$. Setting $X:=F\cap A$, it is easy to see that $X$ fulfils
all the desired properties.

If $E^+=E$, the above construction may lead to the situation where $X^+\ne X$.
However, we have $E\subset X^+\subset X$ and, therefore,
$\mathcal H^s_\infty(X^+)=\mathcal H^s_\infty(E)=\mathcal H^s_\infty(X)$ and
$\L(X^+)=\L(E)=\L(X)$. Note that $(X^+)^+=X^+$. (Indeed, $(A^+)^+=A^+$
for all Lebesgue measurable sets $A\subset\R^d$ since $\L(A^+)=\L(A)$.) Since
$X$ is a Borel set, so is $X^+$. We complete the proof by deducing that the set
$Y:=X^+$ has the following properties:
$Y\subset B(0,R)$ is a Borel set, $Y^+=Y$, $E\subset Y$, $\L(Y\setminus E)=0$
and $\mathcal H^s_\infty(Y)=\mathcal H^s_\infty(E)$.
\end{proof}

The next lemma may be regarded as a complementary result to Lemma~\ref{HgeG}.

\begin{lemma}\label{HleG}
Let $0<t<s<d$ and $R>0$. Then there exists a positive constant
$C=C(s,t,d,R)$ such that for all Lebesgue measurable sets
$E\subset B(0,R)$ with $E^+=E$, we have
\[
\mathcal H_\infty^s(E)\le CG_t(E).
\]
\end{lemma}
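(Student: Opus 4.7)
The plan is to combine Frostman's lemma with Proposition \ref{thm:energy_lemma}: I will construct a Frostman measure $\nu$ on (a Borel thickening of) $E$ of near-maximal total mass, bound its $t$-energy using $t<s$, and then apply Proposition \ref{thm:energy_lemma} to replace $\nu$ by normalized Lebesgue measures on Borel subsets $F_n$ that realize $g_t$-values close to $1/I_t(\mu)$, where $\mu$ is the normalization of $\nu$.

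Since the inequality is trivial when $\mathcal H^s_\infty(E)=0$, I may assume $\mathcal H^s_\infty(E)>0$, which forces $\L(E)>0$ because $E=E^+$. First I would invoke Lemma \ref{lemma-Borel} to produce a Borel set $X\subset B(0,R)$ with $E\subset X$, $X=X^+$, $\L(X\setminus E)=0$ and $\mathcal H^s_\infty(X)=\mathcal H^s_\infty(E)$. Applying Frostman's lemma to the Borel set $X$ yields a non-zero finite Borel measure $\nu$ supported on $X$ with $\nu(B(x,r))\le r^s$ for all $x\in\R^d$ and $r>0$, and $\nu(X)\ge c_1\mathcal H^s_\infty(X)=c_1\mathcal H^s_\infty(E)$ for some constant $c_1=c_1(d,s)>0$.

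Next I would estimate the $t$-energy of $\nu$. By the layer-cake identity
\[
\int |x-y|^{-t}\,d\nu(y)=t\int_0^\infty r^{-t-1}\nu(B(x,r))\,dr,
\]
together with the Frostman growth $\nu(B(x,r))\le r^s$, the containment $X\subset B(0,R)$ (so only $r\le 2R$ contributes a non-trivial ball) and the total mass bound $\nu(X)\le(2R)^s$, I obtain using $t<s<d$ an $x$-uniform estimate
\[
\int |x-y|^{-t}\,d\nu(y)\le C_1=C_1(s,t,d,R).
\]
Integrating with respect to $\nu$ gives $I_t(\nu)\le C_1\nu(X)$, so the probability measure $\mu:=\nu/\nu(X)\in\CP(X)=\CP(X^+)$ satisfies
\[
I_t(\mu)=\frac{I_t(\nu)}{\nu(X)^2}\le\frac{C_1}{\nu(X)}<\infty.
\]

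I would then apply Proposition \ref{thm:energy_lemma} with its parameter $s$ taken to be our $t$ and with ambient Borel set $X$. This yields Borel subsets $F_n\subset X^+=X$ with $\L(F_n)>0$ such that $\mu_n:=\L(F_n)^{-1}\L|_{F_n}$ satisfies $I_t(\mu_n)\to I_t(\mu)$. Since $I_t(\L|_{F_n})=\L(F_n)^2I_t(\mu_n)$, we have $g_t(F_n)=1/I_t(\mu_n)\to 1/I_t(\mu)$. Because $\L(X\setminus E)=0$, each intersection $F_n\cap E$ is a Lebesgue measurable subset of $E$ with $\L(F_n\cap E)=\L(F_n)$ and $I_t(F_n\cap E)=I_t(F_n)$, hence $g_t(F_n\cap E)=g_t(F_n)$. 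Taking the supremum in the definition of $G_t(E)$,
\[
G_t(E)\ge\lim_{n\to\infty}g_t(F_n\cap E)=\frac{1}{I_t(\mu)}\ge\frac{\nu(X)}{C_1}\ge\frac{c_1}{C_1}\mathcal H^s_\infty(E),
\]
and $C:=C_1/c_1$ does the job. The main technical point is the simultaneous reduction to a Borel set $X$ that preserves both $\mathcal H^s_\infty$ (needed to run Frostman) and $G_t$ (needed to conclude); this is exactly what Lemma \ref{lemma-Borel} provides, and the fact that Proposition \ref{thm:energy_lemma} produces $F_n\subset X^+$ rather than merely $F_n\subset X$ is crucial so that the Lebesgue-null set $X\setminus E$ can be discarded without changing $g_t$.
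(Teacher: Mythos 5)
Your proof is correct and follows essentially the same route as the paper's: Frostman's lemma to get a measure of mass comparable to $\mathcal H^s_\infty(E)$, the bound $I_t(\nu)\lesssim\nu(X)$ from $t<s$, Proposition \ref{thm:energy_lemma} to approximate by normalised Lebesgue measures realising $g_t$, and Lemma \ref{lemma-Borel} to pass between $E$ and a Borel thickening. The only differences are cosmetic — you perform the Borel reduction first rather than last, and you use the standard layer-cake computation for the energy bound where the paper writes it in a form designed to generalise to gauge functions later.
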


\begin{proof}
We may assume that $E\ne\emptyset$. Since $E^+=E\ne\emptyset$, we have
$\mathcal L(E)>0$ and, therefore, $\mathcal H_\infty^s(E)>0$.

We first assume that $E$ is Borel measurable. By Frostman's lemma
\cite[Theorem 8.8]{Mat}, there exists a Radon measure
$\mu$ supported on $E$ such that $\mu(B(x,r))\le r^s$ for all $x\in\mathbb R^d$
and $r>0$ and, moreover, $\mu(E)=c\mathcal H_\infty^s(E)$, where $c$ is
a constant depending only on $d$. Next we make a standard calculation in a
slightly complicated looking fashion since that will be useful for later
purposes (see Section~\ref{finalremarks}). Let $h(r):=r^t$ and
$\tilde h(r):=r^s$ for all
$r\ge 0$. Set $\delta:=\frac st-1$ and $a:=\mu(E)^{-\frac 1{1+\delta}}$. Then
$\tilde h(r)\le h(r)^{1+\delta}$ for all $r>0$. By \cite[Theorem 1.15]{Mat}, we
have for some constant $c_1$ depending on $t$ and $s$ that
\begin{align*}
0&<I_t(\mu)=\iint\mu\bigl(\{y\in\R^d:\frac 1{h(|x-y|)}\ge u\}\bigr)
  \,du\,d\mu(x)\\
&\le\iint\min\{\mu(E),\mu(B(x,h^{-1}(u^{-1})))\}\,du\,d\mu(x)\\
&\le\int\Bigl(\int_0^a\mu(E)\,du+\int_a^\infty \tilde h(h^{-1}(u^{-1}))\,du
  \Bigr)\,d\mu(x)\\
&\le\int\bigl(\mu(E)^{1-\frac 1{1+\delta}}+\int_a^\infty u^{-1-\delta}\,du\bigr)\,
  d\mu(x)\\
&\le c_1\mu(E)^{1+\frac\delta{1+\delta}}
  \le c_1\bigl(c\mathcal H_\infty^s(B(0,R))\bigr)^{1-\frac ts}\mu(E).
\end{align*}
Thus $0<I_t(\mu)\le \tilde c\mu(E)<\infty$, where $\tilde c$ is a constant
depending only on $t$, $s$ and $R$.

Applying Proposition~\ref{thm:energy_lemma} to $E$, we find a sequence
$(\mu_k)_{k\in\N}$ of measures such that $\mu_k=\mu(E)\L(E_k)^{-1}\L|_{E_k}$ and
$\lim_{k\to\infty}I_t(\mu_k)=I_t(\mu)$. Here each $E_k$ is a
Borel measurable subset of $E$ with $0<\mathcal L(E_k)<\infty$.
For all sufficiently large $k\in\N$, we obtain
\[
\frac{\mu(E)^2}{\L(E_k)^2}I_t(E_k)= I_t(\mu_k)\le2I_t(\mu)
\le 2\tilde c\mu(E),
\]
giving
\[
\mathcal H_\infty^s(E)=c^{-1}\mu(E)\le 2\tilde cc^{-1}\frac{\L(E_k)^2}{I_t(E_k)}
  \le 2\tilde cc^{-1}G_t(E)
\]
by \eqref{defG}. Choosing $C:=2\tilde cc^{-1}$, completes the proof for Borel
sets $E$.

The general case of $E$ being Lebesgue measurable may be reduced to the above
setting in the following manner. By Lemma~\ref{lemma-Borel}, there exists
a Borel set $X\subset B(0,R)$
so that $X^+=X$, $E\subset X$, $\mathcal H^s_\infty(X)=\mathcal H^s_\infty(E)$ and
$\L(X\setminus E)=0$.  Since $E\subset X$ and $\L(X\setminus E)=0$, we have
$G_t(X)=G_t(E)$.  The above established inequality
$\mathcal H_\infty^s(X)\le CG_t(X)$ implies that
$\mathcal H_\infty^s(E)\le CG_t(E)$.
\end{proof}

As an immediate consequence of Lemma \ref{HleG} we prove that the quantities
$t_0(\bA)$ and $s_0(\bA)$ defined in \eqref{deft0} and \eqref{defs0},
respectively, agree provided that the sets $A_n$ are Lebesgue measurable,
uniformly bounded and $A_n^+=A_n$.

\begin{corollary}\label{H=G}
Let $R>0$. Assume that $\bA:=(A_n)_{n\in\N}$ is a sequence of Lebesgue measurable
subsets of $B(0,R)\subset\mathbb R^d$ such that $A_n^+=A_n$ for all
$n\in\mathbb N$. Then $t_0(\bA)=s_0(\bA)$.
\end{corollary}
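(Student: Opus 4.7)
The plan is to combine the two inequalities already established. Lemma \ref{HgeG} gives the inequality $s_0(\bA) \le t_0(\bA)$ directly, so the remaining task is to prove the reverse inequality $t_0(\bA) \le s_0(\bA)$ using Lemma \ref{HleG}.

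If $s_0(\bA) = d$, there is nothing to prove since by definition $t_0(\bA) \le d$. Otherwise, assume $s_0(\bA) < d$ and fix an arbitrary $s$ with $s_0(\bA) < s < d$. I then pick an intermediate exponent $t$ satisfying $s_0(\bA) < t < s$; by the definition of $s_0(\bA)$ as a supremum over exponents for which $\sum_n G_s(A_n)$ diverges, such a choice of $t$ forces
\[
\sum_{n=1}^\infty G_t(A_n) < \infty.
\]
Since all $A_n$ are contained in the fixed ball $B(0,R)$ and satisfy $A_n^+ = A_n$, Lemma \ref{HleG} supplies a single constant $C = C(s,t,d,R)$, independent of $n$, such that $\mathcal H_\infty^s(A_n) \le C\, G_t(A_n)$ for every $n \in \N$. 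Summing this estimate over $n$ yields $\sum_n \mathcal H_\infty^s(A_n) < \infty$, which by the definition of $t_0(\bA)$ implies $t_0(\bA) \le s$. Since $s$ was arbitrary with $s > s_0(\bA)$, we obtain $t_0(\bA) \le s_0(\bA)$.

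There is no real obstacle beyond noticing that Lemma \ref{HleG} provides a uniform constant depending only on $R$ (not on the particular set $A_n$), which is exactly what makes the termwise inequality summable. The uniform bound $A_n \subset B(0,R)$ in the hypothesis of the corollary is essential precisely for this reason.
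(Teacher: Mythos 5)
Your proof is correct and is exactly the argument the paper intends: the paper simply states that Corollary \ref{H=G} ``follows immediately from Lemmas \ref{HgeG} and \ref{HleG}'', and your write-up supplies the routine details (the $\e$-room between $s_0(\bA)<t<s$ and the uniformity of the constant in Lemma \ref{HleG}) in the standard way.
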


\begin{proof}
The claim follows immediately from Lemmas \ref{HgeG} and \ref{HleG}.
\end{proof}

\begin{proof}[Proof of Theorem \ref{main}.(c)]
The statement follows directly from Theorem \ref{main}.(a)-(b) and Corollary \ref{H=G}, where the proof of Theorem \ref{main}.(b) will be given in Section~\ref{lowerboundsection}.
\end{proof}

\section{Minimal regular energy}\label{regularenergy}

In this section, we introduce a new concept of minimal regular energy and study
basic properties of it. We also
explain how it can be used to estimate dimensions of random covering sets. The main results are Proposition~\ref{pro-1.5} and Lemma~\ref{lem-6}, which  are needed in our proof of Theorem~\ref{main}.(b).

For $E\subset\R^d$, set
\[
\CP_0(E):=\bigl\{\mu\in\CP(E):\mu=\sum_{i=1}^k c_i\L|_{E_i}\text{ where }k\in\N,
               c_i>0\text{ and }E_i\subset E
\text{ are Borel sets}\bigr\}.
\]
Recall from Section~\ref{upperbound} that $\CP(E)$ is the space of Borel
probability measures supported on $E$.
For $E\subset \R^d$ and $0<s<d$, define
\[
\Gamma_s(E):=\left\{\begin{array}{ll} \inf\{I_s(\mu):\mu\in\CP_0(E)\},&
                                                     \text{ if } \L(E)>0,\\
                                       \infty,&\text{ if }\L(E)=0.
\end{array}\right.
\]
The quantity $\Gamma_s(E)$ is called the
{\it minimal regular $s$-energy of $E$}.

\begin{lemma}\label{lem-1.2}
Let $E\in \B(\R^d)$ and $0<s<d$. Then the following properties hold:
\begin{itemize}
\item[(i)] If $F\subset E$ is a Borel set, then $\Gamma_s(E)\leq \Gamma_s(F)$.
\item[(ii)] If $\L(E)>0$, then $\Gamma_s(E)<\infty$.
\item[(iii)] If $E$ is bounded, then $\Gamma_s(E)>0$.
\item[(iv)] For every $\varepsilon>0$, there exists
$\delta=\delta(E,\varepsilon)>0$ such that
\[
\Gamma_s(F)\leq \Gamma_s(E)+\varepsilon
\]
provided that $F\in\B(\R^d)$ and $\L(E\setminus F)<\delta$.
\item[(v)] Let $(E_n)_{n\in\N}$ be a sequence of Borel subsets of $E$.
Supposing that $\L(E)<\infty$, we have
\[
\Gamma_s\bigl(\bigcup_{i=1}^\infty E_i\bigr)=\lim_{n\to\infty}\Gamma_s
  \bigl(\bigcup_{i=1}^n E_i\bigr)
=\inf_{n\in\mathbb N}\Gamma_s\bigl(\bigcup_{i=1}^n E_i\bigr).
\]
\end{itemize}
\end{lemma}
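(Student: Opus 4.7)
My plan is to dispatch (i), (ii), (iii) by routine verifications, then prove the main technical point (iv) by a perturbation-and-renormalisation argument, and finally derive (v) from (i) and (iv). For (i), the inclusion $F \subset E$ gives $\CP_0(F) \subset \CP_0(E)$, so the infimum of $I_s$ over the smaller family is no smaller. For (ii), since $\L(E) > 0$ I can pick a bounded Borel set $E_0 \subset E$ with $0 < \L(E_0) < \infty$; then $\L(E_0)^{-1}\L|_{E_0}$ belongs to $\CP_0(E)$, and its $s$-energy is finite because $E_0$ is bounded and $s < d$ (a routine Fubini estimate using $\int_{B(0,2R)} |z|^{-s}\,d\L(z) < \infty$). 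For (iii), if $\L(E) = 0$ then $\Gamma_s(E) = \infty$ by definition; otherwise $E \subset B(0,R)$ for some $R$, so for every $\mu \in \CP_0(E)$ we have $|x-y| \le 2R$ on $\spt\mu \times \spt\mu$, giving $I_s(\mu) \ge (2R)^{-s} > 0$.

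The core of the lemma is (iv). Since $\Gamma_s(F) \le \Gamma_s(F \cap E)$ by (i) and $\L(E \setminus (F \cap E)) = \L(E \setminus F)$, I may replace $F$ by $F \cap E$ and thereby assume $F \subset E$. If $\Gamma_s(E) = \infty$ there is nothing to prove, so suppose otherwise and fix a measure $\mu = \sum_{i=1}^k c_i \L|_{E_i} \in \CP_0(E)$ with $I_s(\mu) \le \Gamma_s(E) + \e/2$; this choice depends on $E$ and $\e$. Setting $\nu := \sum_{i=1}^k c_i \L|_{E_i \cap F}$ gives $\nu \le \mu$ as measures, whence $I_s(\nu) \le I_s(\mu)$, while a direct estimate yields $\nu(\R^d) \ge 1 - \bigl(\sum_{i=1}^k c_i\bigr)\L(E \setminus F)$. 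Choosing $\delta$ sufficiently small so that $\nu(\R^d)^2$ is close to $1$ (depending on $\sum_i c_i$, $I_s(\mu)$ and $\e$, all now fixed by $E$ and $\e$), the normalised measure $\mu' := \nu/\nu(\R^d) \in \CP_0(F)$ satisfies $I_s(\mu') = \nu(\R^d)^{-2} I_s(\nu) \le I_s(\mu) + \e/2 \le \Gamma_s(E) + \e$, which proves (iv).

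For (v), write $F_n := \bigcup_{i=1}^n E_i$ and $F := \bigcup_{i=1}^\infty E_i$. By (i) the sequence $(\Gamma_s(F_n))$ is non-increasing and bounded below by $\Gamma_s(F)$, so $\lim_{n\to\infty}\Gamma_s(F_n) = \inf_n \Gamma_s(F_n) \ge \Gamma_s(F)$. For the reverse inequality I will apply (iv) with $E$ replaced by $F$: since $\L(F) \le \L(E) < \infty$ and $F_n \uparrow F$, we have $\L(F \setminus F_n) \to 0$, so (iv) yields $\Gamma_s(F_n) \le \Gamma_s(F) + \e$ for any $\e > 0$ and all sufficiently large $n$, completing the argument. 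The only genuinely subtle point is in (iv): the measure $\mu$ and the sum $\sum_{i=1}^k c_i$ both depend on $E$ and $\e$, but they are fixed before one selects $\delta$, so the dependence $\delta = \delta(E, \e)$ asserted in the statement is legitimate.
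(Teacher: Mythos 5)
Your proof is correct and, for parts (i), (ii), (iv) and (v), follows essentially the same route as the paper: take a near-optimal measure $\mu=\sum_i c_i\L|_{E_i}$ for $E$, restrict it to $F$, renormalise, and observe that the normalising constant is close to $1$ when $\L(E\setminus F)$ is small; part (v) is then deduced from (i), (iv) and continuity of $\L$ exactly as in the paper. Two points of comparison are worth recording. First, your argument for (iii) is genuinely different from, and simpler than, the paper's: the paper extracts a weak-star accumulation point of a minimising sequence and invokes lower semi-continuity of $I_s$, whereas you use the pointwise bound $|x-y|^{-s}\ge(2R)^{-s}$ on $E\times E$ to obtain the uniform quantitative bound $\Gamma_s(E)\ge(2R)^{-s}$ directly, which avoids the compactness machinery altogether. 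Second, a small definitional caveat in (ii) and (iv): the paper passes to compact subsets $\widetilde E_i\subset E_i\cap F$ (at the cost of an auxiliary parameter $\varrho\to1$) precisely so that the resulting measure has closed support contained in $F$ and hence manifestly belongs to $\CP(F)$. Your measure $\nu(\R^d)^{-1}\sum_i c_i\L|_{E_i\cap F}$ gives full mass to $F$, but its topological support may leak outside a non-closed Borel set $F$; whether this matters depends on how one reads ``supported on'' in the definition of $\CP(E)$, and under the reading forced by the paper's own definition of $\CP_0(E)$ (arbitrary Borel $E_i\subset E$ are permitted there), your construction is legitimate. If one insists on topological support, the repair is exactly the paper's inner-regularity step, so this is a one-line fix rather than a gap.
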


\begin{proof}
The statement (i) is obvious. To verify (ii), choose a compact set
$F\subset E$ with $\L(F)>0$ (recall that $\L$ is inner regular), and set
$\mu:=\L(F)^{-1}\L|_F$. Clearly, $\mu\in \CP_0(E)$. Since $0<s<d$, we have
\begin{align*}
 I_s(\mu) &=\L(F)^{-2}\iint_{F\times F} |x-y|^{-s}\;d\L(x)d\L(y)\\
 &\leq\L(F)^{-2}\iint_{B(0,R)\times B(0, R)} |x-y|^{-s}\;d\L(x)d\L(y)<\infty,
\end{align*}
where $R>0$ is sufficiently large so that $F\subset B(0,R)$. Hence
$\Gamma_s(E)<\infty$.

For the purpose of proving (iii), suppose on the contrary that
$\Gamma_s(E)=0$. Then there exists
a sequence $(\mu_n)_{n\in\N}$ such that $\mu_n\in \CP_0(E)$ and
$\lim_{n\to \infty} I_s(\mu_n)=0$.  Since $E$ is bounded, the sequence
$(\mu_n)_{n\in\N}$
has at least one accumulation point, say $\mu$, in the weak-star topology. By
Lemma \ref{lemma-1}, $I_s(\cdot)$ is lower semi-continuous and, therefore,
$I_s(\mu)=0$, leading to a contradiction since $\mu$ is a Borel probability
measure. This completes the proof of (iii).

Next we verify (iv). We may assume that $\L(E)>0$. Then
$\Gamma_s(E)<\infty$ by (ii). Let $\varepsilon>0$. By the definition of
$\Gamma_s(\cdot)$, there
exists $\mu=\sum_{i=1}^kc_i \L|_{E_i}\in \CP_0(E)$ such that
$I_s(\mu)\leq\Gamma_s(E)+\tfrac{\varepsilon}2$
where $E_i\subset E$ and $\L(E_i)>0$ for all $i=1,\dots,k$. Define
$\delta_1:=\min\{\L(E_i):i=1,\dots,k\}$,
\[
\gamma:=\sqrt{\frac{\Gamma_s(E)+\tfrac{\varepsilon}2}{\Gamma_s(E)+\varepsilon}}
\quad\text{ and }\quad\delta:=(1-\gamma)\delta_1.
\]
Let $F\in\B(\R^d)$ with
$\L(E\setminus F)<\delta$. We proceed by  showing  that
$\Gamma_s(F)\leq\Gamma_s(E)+\epsilon$.  First notice that
for all $i=1,\dots,k$,
\begin{eqnarray*}
\L(E_i\cap F)&\geq& \L(E_i\cap E)-\L(E\setminus F)
= \L(E_i)-\L(E\setminus F)\\
&\geq&   \L(E_i)-\delta\geq \gamma \L(E_i).
\end{eqnarray*}
Letting $0<\varrho<1$, by the inner regularity of $\L$, there is a compact set
$\widetilde E_i\subset E_i\cap F$ such that
$\L(\widetilde E_i)\ge\varrho\L(E_i\cap F)$ for all $i=1,\dots,k$. Setting
\[
\mu_F:=\frac 1{c_F}\sum_{i=1}^kc_i\L|_{\widetilde E_i},
\]
where $c_F:=\sum_{i=1}^k c_i\L(\widetilde E_i)>0$, the measure $\mu_F$ is
supported on
$F$ and, therefore, $\mu_F\in\CP_0(F)$.
Using the fact that $E_i\subset E$ for all $i=1,\dots,k$, we deduce that
\[
\L(\widetilde E_i)\ge\varrho\L(E_i\cap F)>\varrho\gamma\L(E_i).
\]
 Thus,
$c_F\geq\sum_{i=1}^k c_i\varrho\gamma\L(E_i)=\varrho\gamma$
and
\[
I_s(\mu_F)\leq (c_F)^{-2}I_s(\mu)\leq \rho^{-2}\gamma^{-2} (\Gamma_s(E)
 +\tfrac{\varepsilon}2)=\varrho^{-2}(\Gamma_s(E)+\varepsilon).
\]
Hence $\Gamma_s(F)\leq\varrho^{-2}(\Gamma_s(E)+\varepsilon).$
Letting $\varrho$ tend to 1, gives
$\Gamma_s(F)\leq\Gamma_s(E)+\varepsilon$.
This completes the proof of (iv).

Finally, (v) follows from (i), (iv) and the fact that
\[
\lim_{n\to\infty}\L\Bigl(\bigcup_{i=1}^\infty E_i\setminus\bigl(\bigcup_{j=1}^n E_j
  \bigr)\Bigr)=0.
\]
\end{proof}

We proceed by giving an equivalent definition of $\Gamma_s(A)$
although we will not apply it in this paper. We use the notation
$\mu\ll\nu$ to indicate that the measure $\mu$ is absolutely
continuous with respect to the measure $\nu$.

\begin{lemma}\label{lem-1.1}
Let $E\subset\R^d$. With convention $\inf\emptyset=\infty$, we have
\[
\Gamma_s(E)=\inf\{I_s(\mu):\mu\in\CP(E)\text{ with }\mu\ll\L\}.
\]
\end{lemma}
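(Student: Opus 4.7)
The plan is to prove the two inequalities separately, with the nontrivial direction being a simple-function approximation of the Radon-Nikodym derivative.

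For the easy direction, I would observe that every $\mu\in\CP_0(E)$ is automatically absolutely continuous with respect to $\L$: by definition it has the form $\sum_{i=1}^{k}c_i\L|_{E_i}$ with $c_i>0$ and $E_i\subset E$ Borel, which is the integral of the bounded Borel density $\sum_{i=1}^k c_i\chi_{E_i}$ against $\L$. Therefore $\CP_0(E)\subset\{\mu\in\CP(E):\mu\ll\L\}$, which immediately gives
$$\Gamma_s(E)\ge\inf\{I_s(\mu):\mu\in\CP(E)\text{ with }\mu\ll\L\}.$$

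For the reverse inequality, I would first dispose of the degenerate case: if $\L(E)=0$, then no probability measure can be supported on $E$ and absolutely continuous with respect to $\L$, so the right-hand side is $\inf\emptyset=\infty=\Gamma_s(E)$. Assume now $\L(E)>0$ and let $\mu\in\CP(E)$ with $\mu\ll\L$; if $I_s(\mu)=\infty$ the bound is vacuous, so assume $I_s(\mu)<\infty$. Write $\mu=f\,d\L$ where $f\ge 0$ is Borel and vanishes off $E$ (after possibly modifying on a null set), with $\int f\,d\L=1$. Take a standard increasing sequence of simple functions $0\le f_n\uparrow f$ of the form $f_n=\sum_{i=1}^{k_n}c_{n,i}\chi_{E_{n,i}}$, where $c_{n,i}>0$ and the $E_{n,i}$ are disjoint Borel subsets of $E$; then $\mu_n:=\|f_n\|_1^{-1}f_n\,d\L=\sum_i(\|f_n\|_1^{-1}c_{n,i})\L|_{E_{n,i}}\in\CP_0(E)$ for all $n$ large enough that $\|f_n\|_1>0$.

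To finish, I would invoke monotone convergence twice: the one-dimensional integrals give $\|f_n\|_1\uparrow\|f\|_1=1$, and the nonnegativity of $(x,y)\mapsto|x-y|^{-s}f_n(x)f_n(y)$ together with $f_n(x)f_n(y)\uparrow f(x)f(y)$ yields
$$\iint|x-y|^{-s}f_n(x)f_n(y)\,d\L(x)d\L(y)\uparrow \iint|x-y|^{-s}f(x)f(y)\,d\L(x)d\L(y)=I_s(\mu).$$
Dividing by $\|f_n\|_1^2\to 1$ shows $I_s(\mu_n)\to I_s(\mu)$, so $\Gamma_s(E)\le I_s(\mu)$; taking the infimum over admissible $\mu$ completes the argument. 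No step here should be a serious obstacle—the main thing to be careful about is ensuring the simple-function approximants are strictly positive on disjoint Borel subsets of $E$ so that they belong to $\CP_0(E)$ as defined, which is handled by discarding the atom at $0$ in the usual dyadic simple-function construction.
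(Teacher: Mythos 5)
Your proof is correct and follows essentially the same route as the paper's: the easy inclusion $\CP_0(E)\subset\{\mu\in\CP(E):\mu\ll\L\}$ for one inequality, and a simple-function approximation of the Radon--Nikodym derivative, followed by normalisation, for the other. The only (harmless) difference is that you use an increasing sequence $f_n\uparrow f$ and the monotone convergence theorem to get $I_s(\mu_n)\to I_s(\mu)$, whereas the paper picks a single step function approximating both the mass and the energy integral to within $\varepsilon$; your variant is, if anything, slightly more explicit about why the energies converge.
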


\begin{proof}
It is sufficient to show that for every $\mu\in\CP(E)$ with $\mu\ll \L$ and for
every $\varepsilon>0$, there exists $\eta\in\CP_0(E)$ such that
\[
I_s(\eta)\leq I_s(\mu)+\varepsilon.
\]
To prove the above fact, we denote by $h=\frac{d\mu}{d\L}$ the Radon-Nikodym
derivative of $\mu$ with respect to $\L$.
Approximating $h$ by step functions, we see that for every $\delta>0$, there
exists a step function $g=\sum_{i=1}^k a_i\chi_{E_i}$, where $a_i>0$, $E_i$ is a
Borel set and $\overline E_i\subset E$ for all $i=1,\dots,k$, so that
\begin{equation}\label{e-7'}
 \bigl|I_s(\mu)-\iint |x-y|^{-s}g(x)g(y)\;d\L(x)d\L(y)\bigr|<\frac{\varepsilon}2
\end{equation}
and
\begin{equation}\label{e-7}
 \bigl|\int h(x)\; d\L(x)-\int g(x)\; d\L(x)\bigr|<\delta.
\end{equation}
Let $u:=\sum_{i=1}^k a_i\L(E_i)$. Then $u>1-\delta$ by \eqref{e-7}. Defining
$\eta:=\frac 1u\sum_{i=1}^k a_i\L|_{E_i}$, implies that $\eta\in\CP_0(E)$. Using
\eqref{e-7'}, we get for a small enough $\delta$ that
\begin{align*}
 I_s(\eta)&= u^{-2} \iint |x-y|^{-s} g(x)g(y)\; d\L(x)d\L(y)\\
   &\leq(1-\delta)^{-2}(I_s(\mu)+\tfrac{\varepsilon}2)\\
   &\leq I_s(\mu)+\varepsilon.
\end{align*}
This completes the proof of the lemma.
\end{proof}

\begin{lemma}\label{lem-1.0}
Let $(E_n)_{n\in\N}$ be a decreasing sequence of compact subsets of $\R^d$, and
let $0<s<d$. Assume that there
exists $c>0$ such that $\Gamma_s(E_n)<c$ for all $n\in\N$. Then
$\mathcal H^s_\infty\big(\bigcap_{n=1}^\infty E_n\big)\geq c^{-1}$. In
particular, $\dimH\big(\bigcap_{n=1}^\infty E_n\big)\geq s$.
\end{lemma}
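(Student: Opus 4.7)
The plan is to use compactness of $E_1$ together with lower semi-continuity of the $s$-energy to produce a probability measure supported on $\bigcap_{n=1}^\infty E_n$ with $s$-energy at most $c$, and then to conclude via Remark~\ref{contentbound}.

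More concretely, I would first use the hypothesis $\Gamma_s(E_n)<c$ to choose, for each $n\in\N$, a measure $\mu_n\in\CP_0(E_n)$ with $I_s(\mu_n)<c$. Note that $\L(E_n)>0$ for every $n$, since otherwise $\Gamma_s(E_n)=\infty$, so $\CP_0(E_n)$ is non-empty by definition. Since $E_n\subset E_1$ for all $n$ and $E_1$ is compact, the sequence $(\mu_n)_{n\in\N}$ lies in $\CP(E_1)$, which is sequentially compact in the weak-star topology. Hence there exists a subsequence $(\mu_{n_k})_{k\in\N}$ converging weakly-star to some $\mu\in\CP(E_1)$.

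Next I would verify that $\spt(\mu)\subset\bigcap_{n=1}^\infty E_n$. Fix $m\in\N$. For every $n_k\geq m$, the monotonicity of $(E_n)$ gives $\spt(\mu_{n_k})\subset E_{n_k}\subset E_m$. Since $E_m$ is closed, applying the Portmanteau theorem to the open set $\R^d\setminus E_m$ shows that $\mu$ is supported on $E_m$ as well. As $m$ was arbitrary, $\spt(\mu)\subset\bigcap_{m=1}^\infty E_m$. By Lemma~\ref{lemma-1}, $\eta\mapsto I_s(\eta)$ is lower semi-continuous on $\CP(E_1)$, so
\[
I_s(\mu)\leq\liminf_{k\to\infty}I_s(\mu_{n_k})\leq c.
\]

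Finally, applying Remark~\ref{contentbound} to the measure $\mu$ and the set $E:=\bigcap_{n=1}^\infty E_n$, and using $\mu(E)=1$, yields
\[
\mathcal H^s_\infty\bigl(\bigcap_{n=1}^\infty E_n\bigr)\geq\frac{\mu(E)^2}{I_s(\mu)}=\frac{1}{I_s(\mu)}\geq\frac{1}{c},
\]
which proves the first claim. The inequality $\dimH\bigl(\bigcap_{n=1}^\infty E_n\bigr)\geq s$ then follows at once, since positive $s$-dimensional Hausdorff content forces the Hausdorff dimension to be at least $s$. There is no substantial obstacle in this argument; the only point requiring a little care is the support argument for the weak-star limit, which is a standard application of the Portmanteau theorem.
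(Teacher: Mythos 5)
Your argument is correct and is essentially the paper's own proof: both select $\mu_n\in\CP_0(E_n)$ with $I_s(\mu_n)<c$, pass to a weak-star accumulation point supported on $\bigcap_{n=1}^\infty E_n$, invoke the lower semi-continuity of $I_s$ from Lemma~\ref{lemma-1}, and conclude via Remark~\ref{contentbound}. You have merely spelled out the compactness and support details that the paper leaves implicit.
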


\begin{proof}
According to the definition of $\Gamma_s(\cdot)$, for every $n\in \N$,  there
exists $\mu_n\in\CP_0(E_n)$ so that $I_s(\mu_n)<c$. Let $\mu$ be an accumulation
point of the sequence $(\mu_n)_{n\in\N}$ in the weak-star topology. Then $\mu$ is
supported on $\bigcap_{n=1}^\infty E_n$ and, furthermore, $I_s(\mu)\leq c$
by lower
semi-continuity of $I_s(\cdot)$ (see Lemma~\ref{lemma-1}). The
conclusions follow from Remark~\ref{contentbound}.
\end{proof}

In the remaining part of this section, we assume that $U\subset\R^d$ is open
and $(A_n(x))_{n\in\N}$ is a sequence of $\B(\R^d)$-valued functions defined
on $U$ such that
\begin{itemize}
\item[(C-1)] $\L(A_n(x))<\infty$ for all $x\in U$ and $n\in \N$, and
\item[(C-2)] $\lim_{y\to x}\L\bigl((A_n(x)\setminus A_n(y))\cup(A_n(y)
             \setminus A_n(x))\bigr)=0$ for all $x\in U$ and $n\in\N$.
\end{itemize}
Let $U^\N:=\prod_{n=1}^\infty U$ be endowed with the product topology. Consider
$\eta\in\CP(U)$ and set $\P:=\prod_{i=1}^\infty \eta$.

\begin{lemma}\label{lem-4}
Let $E\in \B(\R^d)$ with $\L(E)<\infty$.
Then, for all $n\in\N$, the mapping
\[
(x_i)_{i=1}^n\mapsto\Gamma_s\bigl(E\cap\bigcup_{i=1}^n A_i(x_i)\bigr)
\]
is upper semi-continuous on $U^n:=\prod_{i=1}^n U$. Moreover, the mapping
\[
\x:=(x_i)_{i=1}^\infty\mapsto\Gamma_s\bigl(E\cap\bigcup_{i=1}^\infty A_i(x_i)\bigr)
\]
is Borel measurable on $U^\N$.
\end{lemma}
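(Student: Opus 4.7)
The plan is to treat the finite-$n$ statement first using the ``continuity in measure'' property (C-2) combined with the quasi-continuity of $\Gamma_s$ from Lemma~\ref{lem-1.2}(iv), and then pass to $n=\infty$ via the monotone-type formula in Lemma~\ref{lem-1.2}(v).

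For the upper semi-continuity on $U^n$, fix $\x=(x_1,\ldots,x_n)\in U^n$ and let $\y=(y_1,\ldots,y_n)\to\x$. Write $F_\x:=E\cap\bigcup_{i=1}^n A_i(x_i)$ and $F_\y:=E\cap\bigcup_{i=1}^n A_i(y_i)$, both of which are Borel sets of finite Lebesgue measure. Then
\[
F_\x\setminus F_\y\subset\bigcup_{i=1}^n\bigl(A_i(x_i)\setminus A_i(y_i)\bigr),
\]
so condition (C-2) gives $\L(F_\x\setminus F_\y)\to 0$ as $\y\to\x$. Given $\varepsilon>0$, Lemma~\ref{lem-1.2}(iv) applied with $F_\x$ in place of $E$ produces $\delta=\delta(F_\x,\varepsilon)>0$ such that any Borel set $F$ with $\L(F_\x\setminus F)<\delta$ satisfies $\Gamma_s(F)\le\Gamma_s(F_\x)+\varepsilon$. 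Taking $F=F_\y$ for $\y$ sufficiently close to $\x$ yields $\Gamma_s(F_\y)\le\Gamma_s(F_\x)+\varepsilon$, which is exactly upper semi-continuity.

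For the Borel measurability on $U^\N$, observe that each map $\x\mapsto\Gamma_s\bigl(E\cap\bigcup_{i=1}^n A_i(x_i)\bigr)$ depends only on the first $n$ coordinates and is upper semi-continuous by the previous step, hence Borel measurable on $U^\N$. Since $\L(E)<\infty$, Lemma~\ref{lem-1.2}(v) applied to the sequence $(E\cap A_i(x_i))_{i\in\N}$ of subsets of $E$ yields
\[
\Gamma_s\bigl(E\cap\bigcup_{i=1}^\infty A_i(x_i)\bigr)
=\lim_{n\to\infty}\Gamma_s\bigl(E\cap\bigcup_{i=1}^n A_i(x_i)\bigr)
=\inf_{n\in\N}\Gamma_s\bigl(E\cap\bigcup_{i=1}^n A_i(x_i)\bigr).
\]
As the pointwise infimum of a countable family of Borel measurable functions, the limiting map is Borel measurable on $U^\N$.

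The only genuinely substantive point to check is the application of Lemma~\ref{lem-1.2}(iv), namely that the closeness-in-measure furnished by (C-2) transfers from the individual sets $A_i(x_i)$ to the union $F_\x$; everything else is a direct assembly of facts already established in Lemma~\ref{lem-1.2}. No new energy estimate is needed.
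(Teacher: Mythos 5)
Your proof is correct and follows essentially the same route as the paper's: condition (C-2) transfers closeness in measure to the finite unions, Lemma~\ref{lem-1.2}(iv) then gives upper semi-continuity, and Lemma~\ref{lem-1.2}(v) handles the passage to the infinite union as a countable infimum of Borel functions. You have merely spelled out the inclusion $F_\x\setminus F_\y\subset\bigcup_{i=1}^n(A_i(x_i)\setminus A_i(y_i))$ that the paper leaves implicit.
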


\begin{proof}
Let $\x\in U^\N$ and $n\in\N$. By (C-2),
$\L\bigl(\bigcup_{i=1}^n A_i(x_i)\setminus\bigcup_{j=1}^n A_j(y_j)\bigr)$
is close to 0 when
$(y_i)_{i=1}^n \in U^n$
is close to $(x_i)_{i=1}^n$. Applying Lemma \ref{lem-1.2}.(iv), we obtain
upper
semi-continuity (and hence Borel measurability) of the mapping
$(x_i)_{i=1}^n\mapsto\Gamma_s\bigl(E\cap\bigcup_{i=1}^n A_i(x_i)\bigr)$ defined on
$U^n$ and that of the mapping
$\x\mapsto\Gamma_s\bigl(E\cap\bigcup_{i=1}^n A_i(x_i)\bigr)$ defined on $U^\N$.
It follows from Lemma~\ref{lem-1.2}.(v) that
\[
\lim_{n\to\infty}\Gamma_s\bigl(E\cap\bigcup_{i=1}^n A_i(x_i)\bigr)
  =\Gamma_s\bigl(E\cap\bigcup_{i=1}^\infty A_i(x_i)\bigr),
\]
which, in turn, implies Borel measurability of the map
$\x\mapsto\Gamma_s\bigl(E\cap\bigcup_{i=1}^\infty A_i(x_i)\bigr)$ on $U^\N$.
\end{proof}

Next proposition provides a sufficient condition for determining a lower
bound for Hausdorff dimensions of typical random covering sets.

\begin{proposition}\label{pro-1.5}
Let $E\subset\R^d$ be compact with $\L(E)>0$. In addition to (C-1)
and (C-2), assume that $A_n(x)$ is compact for all $x\in U$ and $n\in\N$. Let
$0<s<d$. Suppose that for all compact sets $F\subset E$, we have for
$\P$-almost all $\x\in U^\N$ that
\begin{equation}\label{e-5}
\Gamma_s\Bigl(F\cap\bigl(\bigcup_{i=n}^\infty A_i(x_i)\bigr)\Bigr)=\Gamma_s(F)
  \text{ for all }n\in\N.
\end{equation}
Then
\[
\mathcal H_\infty^s\bigl(\limsup_{n\to \infty} A_n(x_n)\bigr)\ge\Gamma_s(E)^{-1}
\text{ and }\dimH\bigl(\limsup_{n\to \infty} A_n(x_n)\bigr)\geq s
\]
for $\P$-almost all $\x\in U^\N$.
\end{proposition}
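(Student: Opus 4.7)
The plan is to combine Lemma~\ref{lem-1.0} with an inductive construction that, for each $\varepsilon>0$, produces a decreasing sequence of random compact subsets of $\limsup_n A_n(x_n)$ whose minimal regular $s$-energies are uniformly bounded by $\Gamma_s(E)+\varepsilon$.

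Fix $\varepsilon>0$. I would build measurable integer-valued functions $m_1(\x)<m_2(\x)<\cdots$ and random compact sets $K_k(\x)\subset E$ such that, on a set of full $\P$-measure, $K_{k+1}\subset K_k$, $K_k\subset\bigcup_{i\geq n_k}A_i(x_i)$ with $n_k:=m_{k-1}+1$ (and $m_0:=0$), and $\Gamma_s(K_k)\leq \Gamma_s(E)+(1-2^{-k})\varepsilon$. The base case uses the hypothesis with $F=E$, which provides $\Gamma_s(E\cap\bigcup_{i\geq n}A_i(x_i))=\Gamma_s(E)$ for all $n$ almost surely; Lemma~\ref{lem-1.2}(v) then lets me pick $m_1(\x)$ as the smallest $m$ with $\Gamma_s(E\cap\bigcup_{i=1}^m A_i(x_i))\leq \Gamma_s(E)+\varepsilon/2$ and put $K_1(\x):=E\cap\bigcup_{i=1}^{m_1(\x)}A_i(x_i)$, which is compact.

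For the inductive step, supposing $K_{k-1}$ and $m_{k-1}$ have been defined measurably, I would like to apply the hypothesis to the random compact set $F=K_{k-1}(\x)$. The essential point is that $K_{k-1}$ is a function of $(x_1,\ldots,x_{m_{k-1}(\x)})$ only, whereas $\bigcup_{i\geq n}A_i(x_i)$ for $n\geq n_k=m_{k-1}+1$ is a function of the independent tail $(x_n,x_{n+1},\ldots)$. Conditioning on the head, the hypothesis applied to the (now deterministic) $K_{k-1}$ yields, for $\P$-almost every tail, $\Gamma_s(K_{k-1}\cap\bigcup_{i\geq n}A_i(x_i))=\Gamma_s(K_{k-1})$ for all $n\geq n_k$; Fubini's theorem then upgrades this to hold $\P$-almost surely in $\x$. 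Lemma~\ref{lem-1.2}(v) once again lets me measurably select the smallest $m_k(\x)\geq n_k$ for which $\Gamma_s(K_{k-1}\cap\bigcup_{i=n_k}^{m_k}A_i(x_i))\leq \Gamma_s(E)+(1-2^{-k})\varepsilon$, and I set $K_k(\x):=K_{k-1}(\x)\cap\bigcup_{i=n_k}^{m_k(\x)}A_i(x_i)$, which is compact. Measurability of all quantities involved is guaranteed throughout by Lemma~\ref{lem-4}, and since $\Gamma_s(K_k)<\infty$ forces $\L(K_k)>0$, each $K_k$ is non-empty.

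Each step succeeds off a $\P$-null set, so the whole construction is available on a single set of full measure, on which $(K_k)_{k\in\N}$ is a decreasing sequence of non-empty compact sets with $\Gamma_s(K_k)\leq \Gamma_s(E)+\varepsilon$ and $n_k\to\infty$. Consequently
\[
\bigcap_{k=1}^\infty K_k\subset\bigcap_{k=1}^\infty\bigcup_{i\geq n_k}A_i(x_i)=\limsup_{n\to\infty}A_n(x_n),
\]
and Lemma~\ref{lem-1.0} yields $\mathcal H_\infty^s(\limsup_n A_n(x_n))\geq (\Gamma_s(E)+\varepsilon)^{-1}$ $\P$-almost surely. Letting $\varepsilon$ run through a countable sequence decreasing to $0$ and intersecting the resulting full-measure sets produces $\mathcal H_\infty^s(\limsup_n A_n(x_n))\geq \Gamma_s(E)^{-1}$, whence $\dimH(\limsup_n A_n(x_n))\geq s$. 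The main obstacle is precisely the Fubini step at each stage: the hypothesis is stated for deterministic compact sets, but we must invoke it with the random $K_{k-1}$; the insistence that $n_k$ exceed the largest coordinate index defining $K_{k-1}$ is exactly what reduces the required equality to a tail event on which Fubini can be applied.
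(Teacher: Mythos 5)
Your proof is correct and follows the same overall strategy as the paper: iterate the hypothesis \eqref{e-5} to produce a decreasing sequence of compact subsets of $\limsup_{n\to\infty}A_n(x_n)$ whose minimal regular $s$-energies stay below $\Gamma_s(E)+\varepsilon$, and then invoke Lemma~\ref{lem-1.0}. The one genuine difference is how the iteration is organised. You use random cut-offs $m_k(\x)$ (stopping times) and retain full measure at every stage, resolving the ``hypothesis applied to a random set'' issue by conditioning on the event $\{m_{k-1}(\x)=M\}$, which lies in the $\sigma$-algebra generated by $(x_1,\dots,x_M)$; there $K_{k-1}$ is a fixed compact subset of $E$ and the required equality for $n\ge M+1$ is a tail event, so Fubini applies. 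The paper instead fixes deterministic cut-offs $N_k$, which is only possible on Borel sets $\Lambda_k\subset U^{N_k}$ with $\eta^{N_k}(\Lambda_k)\ge 1-\sum_{j\le k}\ell^{-j}$; the resulting event $\Omega$ has measure only $(\ell-2)/(\ell-1)$, and the almost sure statement is recovered by letting $\ell\to\infty$. Your version trades that measure bookkeeping for the stopping-time conditioning; both are sound, and both require the same measurability input, namely Lemma~\ref{lem-4} extended in the obvious way (via Lemma~\ref{lem-1.2}.(iv)) to finite intersections of unions of the $A_i(x_i)$, exactly as the paper implicitly uses at each inductive stage.
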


\begin{proof}
From \eqref{e-5}, we obtain
\[
\Gamma_s\Bigl(E\cap\bigl(\bigcup_{i=1}^\infty A_i(x_i)\bigr)\Bigr)=\Gamma_s(E)
\]
for $\P$-almost all $\x\in U^\N$. Note that $0<\Gamma_s(E)<\infty$ by
Lemma~\ref{lem-1.2} claims (ii) and (iii). Letting $\ell>2$,
Lemma~\ref{lem-1.2}.(v) and
Lemma~\ref{lem-4} imply the existence of a Borel measurable function
$n_1:U^\N\to\N$ such that
\[
\Gamma_s\Bigl(E\cap\bigl(\bigcup_{i=1}^{n_1(\x)} A_i(x_i)\bigr)\Bigr)
  <(1+\ell^{-1})\Gamma_s(E)
\]
for $\P$-almost all $\x\in U^\N$. By Lemma~\ref{lem-1.2}.(i), we find
$N_1\in\mathbb N$ and a Borel set $\Lambda_1\subset U^{N_1}$ so that
\begin{equation}\label{Gammasineq}
\eta^{N_1}(\Lambda_1)>1-\ell^{-1}\quad\text{ and }\quad\Gamma_s
 \Bigl(E\cap\bigl(\bigcup_{i=1}^{N_1} A_i(x_i)\bigr)\Bigr)<(1+\ell^{-1})\Gamma_s(E)
\end{equation}
for all $(x_1,\ldots,x_{N_1})\in\Lambda_1$, where
$\eta^{N_1}:=\prod_{i=1}^{N_1}\eta$. Applying \eqref{e-5} with
$F=E\cap\bigl(\bigcup_{i=1}^{N_1} A_i(x_i)\bigr)$, gives for all
$(x_1,\ldots,x_{N_1})\in\Lambda_1$ that
\begin{equation}\label{e-F}
\Gamma_s\Bigl(E\cap\bigl(\bigcup_{i=1}^{N_1} A_i(x_i)\bigr)\cap
  \bigl(\bigcup_{j=N_1+1}^\infty A_j(x_j)\bigr)\Bigr)<(1+\ell^{-1})\Gamma_s(E)
\end{equation}
for $\bigl(\prod_{i=N_1+1}^\infty\eta\bigr)$-almost all
$(x_{N_1+1},x_{N_1+2},\ldots)\in\prod_{i=N_1+1}^\infty U$.
Moreover, by Fubini's theorem, inequality \eqref{e-F} holds for $\P$-almost all
$\x\in\Lambda_1\times\prod_{i=N_1+1}^\infty U$. As above, it follows from
Lemma~\ref{lem-1.2}.(i) that there exist a natural
number $N_2>N_1$ and a Borel set
$\Lambda_2\subset\Lambda_1\times\prod_{i=N_1+1}^{N_2} U\subset U^{N_2}$ with
$\eta^{N_2}(\Lambda_2)\geq\eta^{N_1}(\Lambda_1)-\ell^{-2}$ such that
\[
\Gamma_s\Bigl(E\cap\bigl(\bigcup_{i=1}^{N_1} A_i(x_i)\bigr)\cap
 \bigl(\bigcup_{j=N_1+1}^{N_2} A_j(x_j)\bigr)\Bigr)
 <(1+\ell^{-1})(1+\ell^{-2})\Gamma_s(E)
\]
for all $(x_1,\dots,x_{N_2})\in\Lambda_2$.

By induction, we deduce that there exist a strictly increasing sequence
$(N_n)_{n\in\N}$ of positive integers and a sequence $(\Lambda_n)_{n\in\mathbb N}$
of Borel sets such that $\Lambda_n\subset U^{N_n}$,
$\Lambda_{n+1}\subset\Lambda_{n}\times\prod_{i=N_n+1}^{N_{n+1}} U$,
\begin{equation}\label{etae-1}
\eta^{N_{n+1}}(\Lambda_{n+1})\geq \eta^{N_n} (\Lambda_n)-\ell^{-n-1}
\end{equation}
and
\begin{equation}\label{e-2}
\Gamma_s\Bigl(E\cap\bigcap_{k=1}^n\bigl(\bigcup_{i=N_{k-1}+1}^{N_{k}} A_i(x_i)\bigr)
  \Bigr)<\bigl(\prod_{i=1}^n(1+\ell^{-i})\bigr)\Gamma_s(E)
\end{equation}
for all $(x_1,\ldots,x_{N_n})\in\Lambda_n$. Here $N_0:=0$.
Defining $\Omega:=\bigcap_{n=1}^\infty (\Lambda_n\times\prod_{i=N_1+1}^\infty U)$
and using \eqref{etae-1}, implies that
\begin{equation}\label{e-3}
 \P(\Omega)\geq 1-\sum_{n=1}^\infty \ell^{-n}=\frac{\ell-2}{\ell-1}.
\end{equation}
Moreover, by \eqref{e-2} and Lemma~\ref{lem-1.0}, we have
\begin{equation}\label{midresult}
\begin{split}
 \mathcal H_\infty^s\bigl(\bigcap_{n=1}^\infty\bigcup_{i=N_{n-1}+1}^{N_n} A_i(x_i)\bigr)
&\ge\Bigl(\bigl(\prod_{i=1}^\infty(1+\ell^{-i})\bigr)\Gamma_s(E)\Bigr)^{-1}
 \text{ and }\\
 \dimH\bigl(\bigcap_{n=1}^\infty\bigcup_{i=N_{n-1}+1}^{N_n} A_i(x_i)\bigr)
&\geq s
\end{split}
\end{equation}
for all $\x\in\Omega$. This gives
$\dimH\bigl(\limsup_{n\to \infty} A_n(x_n)\bigr)\geq s$ for all $\x\in\Omega$.
Since $\ell$ can be taken arbitrarily large, it follows from \eqref{e-3} that
\[
\mathcal H_\infty^s\bigl(\limsup_{n\to \infty} A_n(x_n)\bigr)\ge\Gamma_s(E)^{-1}
\text{ and }\dimH\bigl(\limsup_{n\to \infty} A_n(x_n)\bigr)\geq s
\]
for $\P$-almost all $\x\in U^\N$.
\end{proof}

The above proof readily leads to the following deterministic version of
Proposition~\ref{pro-1.5}, which may be of independent interest.

\begin{proposition}\label{pro-1.5det}
Let $E\subset\R^d$ be compact with $\L(E)>0$, and let $(A_n)_{n\in\N}$ be a
sequence of compact  subsets of $\R^d$. Let $0<s<d$. Suppose that for all
compact sets $F\subset E$, we have that
\begin{equation*}
\Gamma_s\Bigl(F\cap\bigl(\bigcup_{i=n}^\infty A_i\bigr)\Bigr)=\Gamma_s(F)
  \text{ for all }n\in\N.
\end{equation*}
Then
\[
\mathcal H_\infty^s\bigl(\limsup_{n\to\infty} A_n\bigr)\ge\Gamma_s(E)^{-1}
\text{ and }\dimH\bigl(\limsup_{n\to\infty} A_n\bigr)\geq s.
\]
\end{proposition}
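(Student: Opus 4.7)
\textbf{The plan} is to mirror the proof of Proposition~\ref{pro-1.5} with its probabilistic layer stripped away: iterate the hypothesis together with the monotone convergence property of $\Gamma_s$ (Lemma~\ref{lem-1.2}(v)) to build a decreasing sequence of compact sets contained in $\limsup_{n\to\infty}A_n$ on which $\Gamma_s$ is uniformly bounded, and then invoke Lemma~\ref{lem-1.0} to control the Hausdorff content of the intersection.

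Concretely, fix $\ell>2$ and set $F_0:=E$. Since $E$ is compact, $\L(E)<\infty$; the hypothesis applied with $F=E$ together with Lemma~\ref{lem-1.2}(v) for the Borel sequence $(E\cap A_i)_{i\in\N}\subset E$ yields some $N_1\in\N$ with
\[
\Gamma_s\bigl(E\cap\bigcup_{i=1}^{N_1}A_i\bigr)<(1+\ell^{-1})\Gamma_s(E).
\]
Set $F_1:=E\cap\bigcup_{i=1}^{N_1}A_i$, which is compact as a finite union of intersections of compact sets. I would then iterate: assuming compact sets $F_1\supset\cdots\supset F_k$ and integers $N_1<\cdots<N_k$ have been chosen with
\[
F_j=E\cap\bigcap_{m=1}^{j}\bigcup_{i=N_{m-1}+1}^{N_m}A_i,\qquad\Gamma_s(F_j)<\prod_{m=1}^{j}(1+\ell^{-m})\Gamma_s(E),
\]
the hypothesis applied to the compact set $F_k$ gives $\Gamma_s(F_k\cap\bigcup_{i>N_k}A_i)=\Gamma_s(F_k)$, and a second use of Lemma~\ref{lem-1.2}(v) furnishes $N_{k+1}>N_k$ such that $F_{k+1}:=F_k\cap\bigcup_{i=N_k+1}^{N_{k+1}}A_i$ is compact and $\Gamma_s(F_{k+1})<\prod_{m=1}^{k+1}(1+\ell^{-m})\Gamma_s(E)$.

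The resulting decreasing sequence $(F_k)_{k\in\N}$ of compact sets satisfies $\Gamma_s(F_k)<c_\ell:=\bigl(\prod_{m=1}^\infty(1+\ell^{-m})\bigr)\Gamma_s(E)<\infty$, so Lemma~\ref{lem-1.0} delivers $\mathcal H_\infty^s(\bigcap_{k\ge 1}F_k)\ge c_\ell^{-1}$ and $\dimH(\bigcap_{k\ge 1}F_k)\ge s$. Every point of $\bigcap_{k\ge 1}F_k$ belongs to $\bigcup_{i=N_{k-1}+1}^{N_k}A_i$ for each $k$ and hence to infinitely many $A_i$ since $N_k\to\infty$; thus $\bigcap_{k\ge 1}F_k\subset\limsup_{n\to\infty}A_n$. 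Letting $\ell\to\infty$ drives $c_\ell\to\Gamma_s(E)$, yielding both claims.

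No serious obstacle is anticipated. The single point requiring care is that each $F_k$ must remain compact so that the hypothesis applies to it; this is automatic, since finite unions and arbitrary intersections of compact subsets of $\R^d$ are compact, and $\L(F_k)\le\L(E)<\infty$ ensures Lemma~\ref{lem-1.2}(v) is applicable at every step.
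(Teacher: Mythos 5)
Your proposal is correct and is essentially the paper's own argument: the paper obtains Proposition~\ref{pro-1.5det} precisely by this de-randomisation of the proof of Proposition~\ref{pro-1.5}, iterating the hypothesis together with Lemma~\ref{lem-1.2}(v) to build the nested compact sets $F_k$ with $\Gamma_s(F_k)<\bigl(\prod_{m=1}^{k}(1+\ell^{-m})\bigr)\Gamma_s(E)$ and then applying Lemma~\ref{lem-1.0}. Your observation that the finite bound on $\Gamma_s(F_k)$ automatically forces $\L(F_k)>0$ at each stage, so the induction never degenerates, is the only point needing care and you have handled it.
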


In the last result of this section, we give a sufficient condition for the
validity of \eqref{e-5}. Recall that, by the definition of $\Gamma_s(\cdot)$,
the inequality \eqref{e-5} is valid for all $F\in\B(\R^d)$ with $\L(F)=0$.

\begin{lemma}\label{lem-6}
Let $F\in\B(\R^d)$ with $\L(F)>0$, and let $0<s<d$. Assume that for every
$\varepsilon>0$ and $\delta>0$ and for every $n\in\N$, there exist an integer
$N>n$ and a Borel
measurable set $\Omega\subset U^\N$ with $\P(\Omega)>1-\delta$ such that
\begin{equation}\label{e-8'}
\int_\Omega\Gamma_s\bigl(F\cap\bigcup_{i=n}^N A_i(x_i)\bigr)\; d\P(\x)
  <\Gamma_s(F)+\varepsilon.
\end{equation}
Then for $\P$-almost all $\x\in U^\N$,
\[
\Gamma_s\bigl(F\cap\bigcup_{i=n}^\infty A_i(x_i)\bigr)=\Gamma_s(F)
\]
for all $n\in\N$.
\end{lemma}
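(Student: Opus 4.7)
The plan is to combine Markov's inequality with the monotone convergence of $\Gamma_s$ along partial unions to upgrade the averaged estimate \eqref{e-8'} to an almost sure equality. Two preliminary observations will be used throughout. Since $F \cap \bigcup_{i=n}^N A_i(x_i) \subset F$, Lemma~\ref{lem-1.2}.(i) gives the one-sided pointwise bound $\Gamma_s\bigl(F \cap \bigcup_{i=n}^N A_i(x_i)\bigr) \geq \Gamma_s(F)$, and the same monotonicity shows that $N \mapsto \Gamma_s\bigl(F \cap \bigcup_{i=n}^N A_i(x_i)\bigr)$ is non-increasing, so Lemma~\ref{lem-1.2}.(v) yields
\[
\lim_{N \to \infty} \Gamma_s\bigl(F \cap \bigcup_{i=n}^N A_i(x_i)\bigr) = \Gamma_s\bigl(F \cap \bigcup_{i=n}^\infty A_i(x_i)\bigr) \geq \Gamma_s(F)
\]
for every $\x \in U^\N$. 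It therefore suffices to prove the reverse almost sure inequality for each fixed $n \in \N$ separately and then take a countable union of null sets.

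Fix $n \in \N$ and set $Y_n(\x) := \Gamma_s\bigl(F \cap \bigcup_{i=n}^\infty A_i(x_i)\bigr) - \Gamma_s(F) \geq 0$, which is Borel measurable by Lemma~\ref{lem-4}. I will show that $\P(Y_n \geq t) = 0$ for every $t > 0$. Given $\varepsilon, \delta > 0$, the hypothesis supplies an integer $N > n$ and a Borel set $\Omega \subset U^\N$ with $\P(\Omega) > 1 - \delta$ such that
\[
\int_\Omega \Bigl(\Gamma_s\bigl(F \cap \bigcup_{i=n}^N A_i(x_i)\bigr) - \Gamma_s(F)\Bigr) \, d\P(\x) < \varepsilon.
\]
Since $Y_n(\x) \leq \Gamma_s\bigl(F \cap \bigcup_{i=n}^N A_i(x_i)\bigr) - \Gamma_s(F)$ by monotonicity in $N$, Markov's inequality applied to the non-negative integrand yields
\[
\P(Y_n \geq t) \leq \P\bigl(\{Y_n \geq t\} \cap \Omega\bigr) + \P(U^\N \setminus \Omega) \leq \varepsilon/t + \delta.
\]
As $\varepsilon$ and $\delta$ are arbitrary, $\P(Y_n \geq t) = 0$ for every $t > 0$, and writing $\{Y_n > 0\} = \bigcup_{k \in \N} \{Y_n \geq 1/k\}$ then shows $Y_n = 0$ $\P$-almost surely.

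To finish, the set $\bigcup_{n \in \N} \{\x : Y_n(\x) > 0\}$ is a countable union of $\P$-null sets and is therefore itself $\P$-null; on its complement the desired equality $\Gamma_s\bigl(F \cap \bigcup_{i=n}^\infty A_i(x_i)\bigr) = \Gamma_s(F)$ holds for every $n \in \N$ simultaneously. I do not anticipate a serious obstacle here: the hypothesis already packages the correct averaged control, and the main step is simply to observe that monotone decrease of $\Gamma_s$ along the partial unions allows one to replace the finite-$N$ random variable in \eqref{e-8'} by $Y_n$ before applying Markov. Measurability of all functions involved is supplied by Lemma~\ref{lem-4}, and the pointwise convergence needed to identify the limit is precisely Lemma~\ref{lem-1.2}.(v).
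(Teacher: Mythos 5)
Your argument is correct and is essentially the paper's proof run in the direct rather than the contrapositive direction: the paper fixes $\gamma>0$, supposes $\P(\{\x: Y_n(\x)\ge\gamma\})>0$, and derives from this a lower bound for $\int_\Omega\Gamma_s\bigl(F\cap\bigcup_{i=n}^N A_i(x_i)\bigr)\,d\P(\x)$ contradicting \eqref{e-8'}, which is precisely your Markov-inequality step read backwards, with the same two ingredients (monotonicity from Lemma~\ref{lem-1.2}.(i) and measurability from Lemma~\ref{lem-4}). One tiny correction: subtracting $\Gamma_s(F)\P(\Omega)$ from \eqref{e-8'} yields $\int_\Omega\bigl(\Gamma_s(F\cap\bigcup_{i=n}^N A_i(x_i))-\Gamma_s(F)\bigr)\,d\P(\x)<\varepsilon+\delta\,\Gamma_s(F)$ rather than $<\varepsilon$, since $\P(\Omega)$ is only guaranteed to exceed $1-\delta$; as $\Gamma_s(F)<\infty$ by Lemma~\ref{lem-1.2}.(ii) and $\delta$ is arbitrary, this does not affect the conclusion.
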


\begin{proof}
Let $n\in\N$ and $\gamma>0$. By Lemma~\ref{lem-1.2}.(i),
\[
\Gamma_s\bigl(F\cap\bigcup_{i=n}^N A_i(x_i)\bigr)\geq\Gamma_s\bigl(F\cap
  \bigcup_{i=n}^\infty A_i(x_i)\bigr)\geq\Gamma_s(F)
\]
for all $\x\in U^\N$ and $N\in\N$. Let
\[
\Omega':=\Bigl\{x\in U^\N:\Gamma_s\Bigl(F\cap\bigl(\bigcup_{i=n}^\infty A_i(x_i)
  \bigr)\Bigr)\geq\Gamma_s(F)+\gamma\Bigr\}.
\]
It follows from Lemma~\ref{lem-4} that $\Omega'$ is a Borel set.
We show that $\P(\Omega')=0$. Suppose on the contrary that
$\P(\Omega')>0$ and choose
\begin{equation}\label{e-9'}
\varepsilon:=\frac{\P(\Omega')\gamma}2\quad\text{ and }\quad
  \delta:=\frac{\P(\Omega')\gamma}{2(\gamma+2\Gamma_s(F))}.
\end{equation}
Recall that $\Gamma_s(F)<\infty$ by Lemma~\ref{lem-1.2}.(ii). Then for all
integers $N>n$ and for all Borel measurable sets $\Omega\subset U^\N$ with
$\P(\Omega)>1-\delta$, we have
\begin{equation*}
\begin{split}
\int_\Omega &\Gamma_s\bigl(F\cap\bigcup_{i=n}^N A_i(x_i)\bigr)\; d\P(\x)\\
 &\geq\int_{\Omega\setminus\Omega'}\Gamma_s(F)\;d\P(\x)
   +\int_{\Omega\cap\Omega'} \Gamma_s(F)+\gamma\;d\P(\x)\\
 &=\P(\Omega\setminus\Omega')\Gamma_s(F)+\P(\Omega\cap \Omega')
   (\Gamma_s(F)+\gamma)\\
 &\geq(\P(\Omega)-\P(\Omega'))\Gamma_s(F)+(\P(\Omega')+\P(\Omega)-1)
   (\Gamma_s(F)+\gamma) \\
 &=(2\P(\Omega)-1)\Gamma_s(F)+(\P(\Omega')+\P(\Omega)-1)\gamma\\
 &\geq(1-2\delta)\Gamma_s(F)+(\P(\Omega')-\delta)\gamma
\qquad\qquad\qquad\qquad\qquad\qquad(\text{by \eqref{e-9'}})\\
 &=\Gamma_s(F)+\varepsilon.
\end{split}
\end{equation*}
This contradicts \eqref{e-8'} and completes the proof.
\end{proof}

\section{Lower bound for Hausdorff dimension}\label{lowerboundsection}

The main purpose of this section is to verify Theorem~\ref{main}.(b). This
is achieved by showing first that, under certain assumptions on the measure
$\eta\in\CP(U)$ and the sequence $(A_n(x))_{n\in\N}$, the assumption
\eqref{e-8'} of Lemma~\ref{lem-6} holds. Theorem~\ref{main}.(b)
then follows by applying Lemma~\ref{lem-6} and Proposition~\ref{pro-1.5}.

We start with a simple observation on independent random variables.

\begin{lemma}\label{largesum}
Let $(a_n)_{n\in\N}$ be a sequence of positive numbers such that
$\sum_{n=1}^\infty a_n=\infty$, and let $0<c<1$. Suppose that
$(\omega_n)_{n\in\N}$ is a sequence of independent random variables with
$\omega_n\in\{0\}\cup[a_n,\infty[$ and the probability $\P$ satisfies
\begin{equation}\label{assumption}
\P(\omega_n\ne 0)\ge c.
\end{equation}
Then for all $N\in\N$ and $C\ge 0$, we have
\[
\lim_{M\to\infty}\P\bigl(\sum_{n=N}^M\omega_n\ge C\bigr)=1.
\]
\end{lemma}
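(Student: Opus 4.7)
The plan is to reduce the problem to a sum of independent $\{0,a_n\}$-valued summands and then apply an exponential Chernoff bound. Set $\xi_n:=\mathbf{1}_{\{\omega_n\neq 0\}}$; these are independent Bernoulli variables with $\P(\xi_n=1)\geq c$, and by the support hypothesis $\omega_n\geq a_n\xi_n\geq 0$. Since the events $\{\sum_{n=N}^M\omega_n\geq C\}$ are non-decreasing in $M$, it is enough to show that for some (and hence every) fixed $\lambda>0$,
\[
\lim_{M\to\infty}\P\Bigl(\sum_{n=N}^M a_n\xi_n<C\Bigr)=0.
\]

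To this end, I would apply Markov's inequality to $\exp\bigl(-\lambda\sum_{n=N}^M a_n\xi_n\bigr)$ and use independence, together with the elementary bound $1-c(1-e^{-\lambda a_n})\leq\exp(-c(1-e^{-\lambda a_n}))$, to obtain
\[
\P\Bigl(\sum_{n=N}^M a_n\xi_n<C\Bigr)\leq e^{\lambda C}\prod_{n=N}^M\bigl(1-c(1-e^{-\lambda a_n})\bigr)\leq\exp\Bigl(\lambda C-c\sum_{n=N}^M\bigl(1-e^{-\lambda a_n}\bigr)\Bigr).
\]
Letting $M\to\infty$, the proof reduces to the single claim $\sum_{n=1}^\infty(1-e^{-\lambda a_n})=\infty$. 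Via the elementary inequality $1-e^{-x}\geq\tfrac12\min(x,1)$ valid for $x\geq 0$, this reduces further to $\sum_n\min(a_n,1)=\infty$, which I would verify by partitioning $\N$ into the indices with $a_n>1$ and those with $a_n\leq 1$: either the first set is infinite, contributing a divergent sum of $1$'s, or the second set is cofinite, on which $\min(a_n,1)=a_n$ and the corresponding subseries differs from the divergent $\sum a_n$ by only finitely many terms.

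The main obstacle is conceptual rather than technical: a naive Chebyshev argument fails because the variance of $\sum a_n\xi_n$ cannot be controlled without an extra assumption on $(a_n)$, so one must recognise that the exponential moment, combined with the truncation $\min(a_n,1)$, is the correct tool. Once this has been identified, the remaining computation is entirely mechanical.
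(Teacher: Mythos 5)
Your argument is correct. Let me compare it with the paper's.

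The paper also begins by reducing to the indicator variables $\tilde\omega_n=\mathbf{1}_{\{\omega_n\neq 0\}}$ and by truncating the weights to $b_n:=\min\{1,a_n\}$, observing $\omega_n\ge\tilde\omega_n b_n$ and $\sum b_n=\infty$. It then sets $X_k:=\sum_{n=N}^{N+k}\tilde\omega_n b_n$, notes $\E(X_k)\ge c\sum_{n=N}^{N+k} b_n\to\infty$, and cites Hoeffding's inequality together with the estimate $\sum b_n^2\le\sum b_n$ (valid because $0<b_n\le 1$) to get $\P(X_k<\tfrac12\E X_k)\le 2\exp(-\tfrac12 c^2\sum_{n=N}^{N+k}b_n)\to 0$. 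You instead run a one-sided Chernoff computation by hand: Markov applied to $e^{-\lambda\sum a_n\xi_n}$, factorisation by independence, the bound $1-c(1-e^{-\lambda a_n})\le e^{-c(1-e^{-\lambda a_n})}$, and then the reduction $1-e^{-x}\ge\tfrac12\min(x,1)$ to recover exactly the same divergent series $\sum\min(a_n,1)$. So the truncation that the paper performs \emph{on the coefficients before} applying a named concentration inequality appears in your version \emph{inside} the elementary bound on $1-e^{-x}$. Structurally the two proofs are close relatives — both rest on the exponential-moment method, both pass through $\sum\min(a_n,1)=\infty$ — but yours is self-contained (no appeal to Hoeffding) at the cost of a slightly longer computation, while the paper's is shorter by outsourcing the concentration step. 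Either is acceptable; your diagnosis that a second-moment/Chebyshev approach would not suffice without extra hypotheses on $(a_n)$ is the right reason for choosing an exponential moment.

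One tiny point of care: when you substitute $x=\lambda a_n$ into $1-e^{-x}\ge\tfrac12\min(x,1)$ you obtain $\min(\lambda a_n,1)$, not $\min(a_n,1)$; since $\lambda>0$ is fixed, $\min(\lambda a_n,1)\ge\min(\lambda,1)\min(a_n,1)$, so the conclusion is unaffected, but this intermediate step deserves a word.
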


\begin{proof}
Define $b_n:=\min\{1,a_n\}$ for $n\in\N$. Then either $b_n=1$ for
infinitely many $n\in\N$, or $b_n=a_n$ for all sufficiently large $n\in\N$. In
both of these cases we have $\sum_{n=1}^\infty b_n=\infty$.

Let $N\in\N$ and $C\ge 0$. Defining
\[
\tilde\omega_n:=
  \begin{cases}
    0,&\text{ if }\omega_n=0\\
    1,&\text{ if }\omega_n\ne 0,
  \end{cases}
\]
gives
\begin{equation}\label{newvariable}
\omega_n\ge\tilde\omega_nb_n
\end{equation}
for all $n\in\N$. Set $X_k:=\sum_{n=N}^{N+k}\tilde\omega_n b_n$ and
$C_k:=\E(X_k)$ for $k\in\N$, where the expectation with respect to $\P$
is denoted by $\E$. Then \eqref{assumption}
and the fact that $\sum_{n=1}^\infty b_n=\infty$ imply
\begin{equation}\label{expect}
C_k\ge c\sum_{n=N}^{N+k}b_n\ge 2C
\end{equation}
for all large enough $k\in\N$. Applying Hoeffding's inequality (see
\cite[Theorem 2]{Ho63}) to the independent random variables
$(\tilde\omega_n b_n)_{n=N}^{N+k}$, taking into account that
$\sum_{n=N}^{N+k}(b_n)^2\le \sum_{n=N}^{N+k}b_n$ (since $0<b_n\leq 1$) and using
\eqref{expect}, we have for all sufficiently large $k\in\N$ that
\begin{align*}
\P\bigl(X_k<\frac{C_k}2\bigr)
 &\le\P\bigl(|X_k-\E(X_k)|\ge\frac12\E(X_k)\bigr)
   \le2\exp\Bigl(\frac{-(C_k)^2}{2\sum_{n=N}^{N+k}(b_n)^2}\Bigr)\\
 &\le2\exp\Bigl(\frac{-(C_k)^2}{2\sum_{n=N}^{N+k}b_{n}}\Bigr)
   \le 2\exp\Bigl(-\frac12 c^2\sum_{n=N}^{N+k}b_n\Bigr).
\end{align*}
Finally, \eqref{newvariable}, \eqref{expect} and the above inequality combine
to give that for all large enough $k\in\N$
\begin{align*}
\P\Bigl(\sum_{n=N}^{N+k}\omega_n\ge C\Bigr)
 &\ge\P\Bigl(\sum_{n=N}^{N+k}\tilde\omega_nb_n\ge C\Bigr)
   \ge\P\bigl(X_k\ge\frac{C_k}2\bigr)\\
 &\ge 1-2\exp\Bigl(-\frac 12 c^2\sum_{n=N}^{N+k}b_n\Bigr)
   \xrightarrow[k\to\infty]{}1.
\end{align*}
This completes the proof.
\end{proof}

In the remaining part of this section, let $U\subset\R^d$ be open and let $E\subset U$ be a compact set with
$\L(E)>0$. Assume that $\eta\in\CP(U)$ satisfies $\eta(E)>0$,
$\eta|_E\ll\L|_E$
and
\begin{equation}\label{e-10}
\sup_{x,y\in E} \frac{h(x)}{h(y)}<\infty,
\end{equation}
where $h:=\frac{d\eta|_E}{d\L}$ is the Radon-Nikodym derivative of $\eta|_E$
with respect to $\L$. Set $\P:=\prod_{i=1}^\infty \eta$. Let $0<s<d$. Next we
define the sequence $(A_n(x))_{n\in\N}$.

\begin{definition}\label{sequenceAn}
Let  $y_0\in\R^d$. Assume that $(K_n)_{n\in\N}$ is a sequence of
compact sets in $\R^d$ satisfying
\begin{itemize}
\item[(i)] $\L(K_n)>0$,
\item[(ii)] $\lim_{n\to\infty}\diam K_n=0$,
\item[(iii)] $\lim_{n\to \infty}\dist(y_0,K_n)=0$\text{ and}
\item[(iv)] $\sum_{n=1}^\infty g_s(K_n)=\infty$ (recall \eqref{gtdef}).
\end{itemize}
Choose $r_0>0$ such that $K_n\subset B(y_0,r_0)$ for all $n\in\N$.
Assume that $W:U\times B(y_0,r_0)\to\R^d$ is a uniform
bidiffeomorphism (recall Definition~\ref{bidiffeo}) satisfying $W(x,y_0)=x$
for
all $x\in U$. Define $A_n(x):=W(x,K_n)$ for all $x\in U$ and $n\in\N$.
\end{definition}

The sequence $(A_n(x))_{n\in\N}$ has following properties:

\begin{lemma}\label{propertiesC}
Let $(A_n(x))_{n\in\N}$ be as in Definition~\ref{sequenceAn}. Then
the properties
(C-1) and (C-2) from Section~\ref{regularenergy} are satisfied. Furthermore,
\begin{itemize}
\item[(C-3)] for every $\varepsilon>0$ and for every Borel set $F\subset E$
  with $\L(F)>0$, there exists $N\in\N$ so that
  \begin{equation*}
   \L\bigl(\bigl\{x\in F:\frac{\L(F\cap A_n(x))}{\L(A_n(x))}\ge 1-\varepsilon
   \bigr\}\bigr)\ge (1-\varepsilon)\L(F)
  \end{equation*}
  for all $n\geq N$, and
\item[(C-4)] for all Borel sets $E_1,E_2\subset E$ with positive
 Lebesgue measure and for every $\varepsilon>0$, there exists $N\in\N$ such
 that
 \begin{equation*}
  \begin{split}
    \iint_{E_1\times E_2}&\iint_{A_n(x)\times A_m(y)} |u-v|^{-s}\;
     d\L(u)d\L(v)d\L(x)d\L(y)\\
    \leq&(1+\varepsilon)\iint_{E_1\times E_2} \L(A_n(x))\L(A_m(y))|x-y|^{-s}\;
     d\L(x)d\L(y)
   \end{split}
 \end{equation*}
 for all $n,m\geq N$.
\end{itemize}
\end{lemma}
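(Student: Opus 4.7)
The four properties split according to which earlier results they rely on. Property (C-1) is immediate from \eqref{bidiffeobound}: change of variables applied to the diffeomorphism $W(x,\cdot)$ gives $\L(A_n(x))=\L(W(x,K_n))\le C^d\L(K_n)<\infty$, since $K_n$ is compact.

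For (C-2), I plan to introduce $\Psi_y:=W(x,\cdot)^{-1}\circ W(y,\cdot)$. For $y$ sufficiently close to $x$, this is a diffeomorphism defined on an open neighborhood of the compact set $K_n$ inside $B(y_0,r_0)$, tending to the identity uniformly (with uniformly bounded first derivatives and Jacobian bounded by $C^{2d}$) as $y\to x$. Since $A_n(y)=W(x,\Psi_y(K_n))$ and $W(x,\cdot)$ is a bijection on $B(y_0,r_0)$, change of variables yields
\begin{equation*}
\L\bigl((A_n(x)\setminus A_n(y))\cup(A_n(y)\setminus A_n(x))\bigr)
\le C^d\L\bigl((K_n\setminus\Psi_y(K_n))\cup(\Psi_y(K_n)\setminus K_n)\bigr).
\end{equation*}
The right-hand side equals $\int|\chi_{K_n}(z)-\chi_{K_n}(\Psi_y^{-1}(z))|\,d\L(z)$ and tends to $0$ as $y\to x$ by a standard density argument: approximate $\chi_{K_n}$ in $L^1$ by continuous functions compactly supported in $B(y_0,r_0)$, apply their uniform continuity, and handle the residual term via a change of variables using the bounded Jacobian in a triangle inequality.

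For (C-3) and (C-4), I would apply Lemmas \ref{prod} and \ref{intestimate} directly. Definition \ref{sequenceAn} explicitly forces $W_x(y_0)=x$, so the hypotheses of those lemmas are met. Given $\varepsilon>0$ and the required sets ($F$, or $E_1,E_2$), the corresponding lemma produces a threshold $\delta>0$ such that its conclusion is valid for any measurable generating set contained in $B(y_0,\delta)$. By conditions (ii) and (iii) of Definition \ref{sequenceAn}, there exists $N\in\N$ with $K_n\subset B(y_0,\delta)$ for all $n\ge N$. Applying Lemma \ref{prod} with generating set equal to $K_n$ gives (C-3); applying Lemma \ref{intestimate} with $E_1=K_n$ and $E_2=K_m$ for $n,m\ge N$ gives (C-4). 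The only non-mechanical step in the entire argument is the $L^1$-continuity argument underlying (C-2); the other three properties reduce to direct appeals to earlier results.
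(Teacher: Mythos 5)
Your proposal is correct and follows essentially the same route as the paper: (C-1) is immediate, (C-3) and (C-4) are obtained exactly as in the paper by feeding $K_n\subset B(y_0,\delta)$ (guaranteed for large $n$ by Definition~\ref{sequenceAn} (ii)--(iii)) into Lemmas~\ref{prod} and \ref{intestimate}, whose hypothesis $W_x(y_0)=x$ is indeed part of Definition~\ref{sequenceAn}. The only divergence is in (C-2), where the paper argues via $\L(\overline V_\delta(A_n(x)))\to\L(A_n(x))$ for the compact sets $A_n(x)$ and continuity of $y\mapsto\L(A_n(y))$, whereas you conjugate to $\Psi_y=W(x,\cdot)^{-1}\circ W(y,\cdot)$ and use $L^1$-continuity of $\chi_{K_n}$ under perturbations of the identity with bounded Jacobian; both arguments are valid and of comparable length.
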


\begin{proof}
Property (C-1) is clearly valid. Note that
$\L(F)=\lim_{\delta\to 0}\L(\overline V_\delta(F))$
for all compact sets $F\subset\R^d$ (recall \eqref{dneighbourhood}).
Let $n\in\N$. For every $\delta>0$,  Definition~\ref{sequenceAn}(a) implies
that $A_n(y)\subset \overline V_\delta(A_n(x))$ if
$y$ is close to $x$. Thus $\lim_{y\to x}\L\bigl(A_n(x)\setminus A_n(y)\bigr)=0$
by the continuity of $x\mapsto\L(A_n(x))$. For every $\varepsilon>0$ and
$x\in U$,  Definition~\ref{sequenceAn}(a) guarantees the existence of
$\delta_1,\delta_2>0$ such that
$\L\bigl(\overline V_{\delta_1}(A_n(y))\setminus A_n(y)\bigr)<\varepsilon$ for all
$y\in B(x,\delta_2)$. This, in turn, gives
$\lim_{y\to x}\L\bigl(A_n(y)\setminus A_n(x)\bigr)=0$, implying (C-2).
Property (C-3) follows from Lemma~\ref{prod} and Definition~\ref{sequenceAn}
(ii) and (iii). Finally, (C-4) is given by
Lemma~\ref{intestimate} and Definition~\ref{sequenceAn} (ii) and (iii).
\end{proof}

Now we are ready to prove that the assumption \eqref{e-8'} of
Lemma~\ref{lem-6}
is satisfied for compact sets.

\begin{proposition}\label{assumptionOK}
Let $F\subset E$ be a compact set with $\L(F)>0$. Then for every
$\varepsilon>0$, $\delta>0$
and $n\in\N$, there exist an integer $N>n$ and a Borel measurable set
$\Omega\subset U^\N$ with $\P(\Omega)>1-\delta$ such that
\begin{equation}\label{e-8''}
\int_\Omega\Gamma_s\bigl(F\cap\bigcup_{i=n}^N A_i(x_i)\bigr)\; d\P(\x)
  <\Gamma_s(F)+\varepsilon.
\end{equation}
\end{proposition}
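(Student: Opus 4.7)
The plan is to construct, for $\P$-most $\x\in U^\N$, a probability measure $\mu_\x\in\CP_0(F\cap\bigcup_{i=n}^NA_i(x_i))$ whose $s$-energy is close to $\Gamma_s(F)$; since $\Gamma_s\bigl(F\cap\bigcup_{i=n}^NA_i(x_i)\bigr)\le I_s(\mu_\x)$, integrating then yields \eqref{e-8''}. I would first pick a near-optimal $\mu=\sum_{j=1}^kc_j\L|_{F_j}\in\CP_0(F)$ with $I_s(\mu)<\Gamma_s(F)+\varepsilon/4$, whose density is $\rho:=\sum_jc_j\chi_{F_j}$; then set $S_N:=\sum_{i=n}^Ng_s(K_i)$ and deterministic weights $\beta_i:=g_s(K_i)/S_N$, so that $\sum_{i=n}^N\beta_i=1$ and $\sum_{i=n}^N\beta_i^2/g_s(K_i)=1/S_N\to 0$ as $N\to\infty$ by Definition~\ref{sequenceAn}(iv).

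For each $\x$ I would then build the random unnormalised measure
\[
\tilde\mu_\x:=\sum_{i=n}^N\beta_i\,\frac{\rho(x_i)\,\chi_F(x_i)}{h(x_i)\,\L(A_i(x_i))}\,\L|_{F\cap A_i(x_i)},
\]
which is a finite non-negative linear combination of restrictions of $\L$ to Borel subsets of $F\cap\bigcup_{i=n}^NA_i(x_i)$; hence its normalisation $\mu_\x:=\tilde\mu_\x/\tilde\mu_\x(\R^d)$ lies in $\CP_0\bigl(F\cap\bigcup_{i=n}^NA_i(x_i)\bigr)$ on the event $\{\tilde\mu_\x(\R^d)>0\}$. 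The density weight $\rho(x_i)/(h(x_i)\L(A_i(x_i)))$ is designed so that, upon taking a $d\eta=h\,d\L$ expectation of one summand and using $W(\cdot,y_0)=\Id$ to concentrate $A_i(x)$ near $x$ (Lebesgue differentiation), the factors $h$ and the Jacobian of $W_{x}$ cancel and the summand morally reproduces $\beta_i\,d\mu$.

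The analysis reduces to first- and second-moment estimates. By Fubini and (C-3) applied to each $F_j$ for large $i$, $\E[\tilde\mu_\x(\R^d)]\to 1$ as $N\to\infty$. Splitting $I_s(\tilde\mu_\x)$ into self-terms ($i=j$) and cross-terms ($i\ne j$), the bounded-distortion bounds on $W$ together with $I_s(A_i(x))\asymp I_s(K_i)$ and $\L(A_i(x))\asymp\L(K_i)$ yield $\E[\text{self}]\le C\sum_i\beta_i^2/g_s(K_i)=C/S_N\to 0$. Independence of the $x_i$'s combined with a density-weighted version of the inequality in (C-4), whose proof mimics that of Lemma~\ref{intestimate} after absorbing the bounded factors $\rho$, $h^{-1}$, $\L(A_i)^{-1}$ into the integrand, gives $\E[\text{cross}]\le(1+\tilde\varepsilon)\bigl(\sum_i\beta_i\bigr)^2I_s(\mu)=(1+\tilde\varepsilon)I_s(\mu)$ for $N$ sufficiently large; hence $\E[I_s(\tilde\mu_\x)]\le I_s(\mu)+\varepsilon/4$ eventually.

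To conclude, analogous second-moment (and, for the energy, fourth-moment-type) estimates on $\tilde\mu_\x(\R^d)$ and $I_s(\tilde\mu_\x)$ -- again exploiting independence of the $x_i$ -- would be fed into Chebyshev's inequality to deliver a Borel set $\Omega\subset U^\N$ with $\P(\Omega)>1-\delta$ on which simultaneously $\tilde\mu_\x(\R^d)\ge 1-\varepsilon_1$ and $I_s(\tilde\mu_\x)\le I_s(\mu)+\varepsilon/3$. On $\Omega$ the normalised $\mu_\x$ then satisfies $\Gamma_s\bigl(F\cap\bigcup_{i=n}^NA_i(x_i)\bigr)\le I_s(\mu_\x)\le\Gamma_s(F)+\varepsilon$, and integrating over $\Omega$ produces \eqref{e-8''}. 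The main obstacle is precisely this concentration step: $I_s(\tilde\mu_\x)$ is a double sum over dependent pairs $(i,j)$ of random quantities that are individually bounded but whose variance is delicate, and showing that the variance is negligible compared with the squared mean rests crucially on the weight choice $\beta_i\propto g_s(K_i)$ together with the divergence of $S_N$ built into Definition~\ref{sequenceAn}(iv).
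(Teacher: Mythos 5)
Your construction is genuinely different from the paper's and, in outline, it can be made to work, but as written its central step is asserted rather than proved. The paper also starts from a near-optimal $\mu=\sum_i c_i\L|_{F_i}\in\CP_0(F)$, but it then refines the partition so that $h$, $\L(A_m(\cdot))$ and $I_s(A_m(\cdot))$ are nearly constant on each piece (its condition \eqref{e-2.5'}), assigns to each piece $F_i$ its own block of indices $[M_i,M_{i+1})$, keeps only the random ``good'' indices $S_i(\x)=\{m:x_m\in F_{i,m}\}$, and normalises \emph{inside each block} by the random quantity $Q_{S_i}=\sum_{p\in S_i}g_s(A_p(z_i))$. Hoeffding (Lemma~\ref{largesum}) is used only to make each $Q_{S_i}\ge\gamma^{-1}$ with high probability; on that event the total mass of $\mu^\x$ is bounded below \emph{deterministically}, the self-terms carry the factor $Q_{S_i}^{-1}\le\gamma$, and the energy is then controlled purely in expectation by conditioning on the atoms $\pi^{-1}(\bS)$ (Lemmas~\ref{lem-prod} and~\ref{lem-2.2}). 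In other words, the paper's bookkeeping keeps every probabilistic estimate linear (sums of independent bounded indicators), and no variance bound for the energy is ever needed. Your scheme instead uses a single block with deterministic weights $\beta_i\propto g_s(K_i)$ and the importance-sampling correction $\rho(x_i)/h(x_i)$ (legitimate, since $h$ is bounded away from $0$ and $\infty$ on $E$ by \eqref{e-10}), which is more symmetric, but it pushes all the difficulty into second-moment estimates for the random energy $I_s(\tilde\mu_\x)$, a quadratic form whose individual off-diagonal terms are of size up to $\bigl(g_s(K_i)g_s(K_j)\bigr)^{-1/2}$ and hence unbounded as $i,j\to\infty$; you flag this concentration step yourself as the main obstacle and do not carry it out. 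It can in fact be done (uniform boundedness of Riesz potentials of bounded compactly supported densities for $s<d$, together with Lemma~\ref{inverse}, bounds the conditional expectations, Cauchy--Schwarz handles the pairs sharing both indices, and $\sum_i\beta_i^2\to0$, $\sum_i\beta_i^2/g_s(K_i)=1/S_N\to0$ do the rest), but none of this is in your text.

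Two concrete remarks. First, the heavy step is avoidable within your own framework: the proposition only asks for a bound on $\int_\Omega\Gamma_s\bigl(F\cap\bigcup_{i=n}^N A_i(x_i)\bigr)\,d\P(\x)$, so it suffices to take for $\Omega$ the event $\{\tilde\mu_\x(\R^d)\ge1-\varepsilon_1\}$ (Chebyshev for a weighted sum of independent, uniformly bounded variables, with variance $\lesssim\sum_i\beta_i^2\to0$, plus the first-moment input from Lemma~\ref{prod}/(C-3)) and then write $\int_\Omega\Gamma_s(\cdots)\,d\P\le(1-\varepsilon_1)^{-2}\int_\Omega I_s(\tilde\mu_\x)\,d\P\le(1-\varepsilon_1)^{-2}\E\bigl[I_s(\tilde\mu_\x)\bigr]$; only the expectation of the energy is needed, exactly as in the paper. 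Second, your ``density-weighted version of (C-4)'' does need a short argument, not just absorption of bounded factors: the factor $\bigl(\L(A_i(x))\L(A_j(y))\bigr)^{-1}$ cannot be absorbed up to a multiplicative constant without ruining the $(1+\tilde\varepsilon)$-bound. What saves it is that in the far regime $|x-y|>\tfrac12\delta$ the estimate in the proof of Lemma~\ref{intestimate} is pointwise in $(x,y)$, namely $J_s(A_i(x),A_j(y))\le(1-2\tilde\e)^{-s}\L(A_i(x))\L(A_j(y))|x-y|^{-s}$, so dividing by $\L(A_i(x))\L(A_j(y))$ and multiplying by $\rho(x)\rho(y)$ is exact, while in the near regime crude constants (involving $\|\rho\|_\infty$, $\min_jc_j$ and the distortion of $W$) are harmless because that term already carries the small factor $\tilde\e$; you should also dispose separately of the finitely many indices below the threshold where Lemma~\ref{intestimate} applies, whose total weight $\sum\beta_i$ tends to $0$ as $N\to\infty$, and verify Borel measurability of $\x\mapsto\tilde\mu_\x(\R^d)$ and $\x\mapsto I_s(\tilde\mu_\x)$ along the lines of the argument in Lemma~\ref{prod}. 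With these points filled in, your route gives an alternative proof; the paper's block construction buys exactly the freedom from variance estimates that your version must supply.
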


\begin{proof}
Let $\varepsilon>0$, $\delta>0$ and $n\in\N$. Choose
$\mu=\sum_{i=1}^k c_i\L|_{F_i}\in\CP_0(F)$ satisfying
\[
I_s(\mu)<\Gamma_s(F)+\tfrac{\varepsilon}2.
\]
Let $0<\gamma<1$ be sufficiently small (which will be determined later). By
partitioning $F_i$ into smaller Borel sets, if necessary, such that each new
$F_i$ is an approximate level set of the density $h$ with small diameter and
with $\eta(F_i)>0$ (recall that $\L(F)>0$ implies $\eta(F)>0$
by \eqref{e-10}),
we may assume that for all $i=1,\dots,k$ and $m\geq n$,
\begin{equation}\label{e-2.5'}
\sup_{x,y\in F_i}\max\Bigl\{\frac{h(x)}{h(y)},\;\frac{\L(A_m(x))}{\L(A_m(y))},\;
 \frac{I_s(A_m(x))}{I_s(A_m(y))}\Bigr\}\leq 1+\gamma.
\end{equation}
For every $i=1,\dots,k$, fix $z_i\in F_i$. Define for all $m\ge n$,
\[
F_{i,m}:=\bigl\{x\in F_i:\frac{\L(F_i\cap A_m(x))}{\L(A_m(x))}>1-\gamma\bigr\}.
\]
Using the fact that the map $x\mapsto\L(F\cap A_n(x))$ is Borel measurable
(see the proof of Lemma~\ref{prod}), we deduce that $F_{i,m}$ is a Borel set.
By (C-3) and (C-4), there exists an integer $M>n$ such that
\begin{equation}\label{Fimeasure}
\L(F_{i,m})\geq (1-\gamma)\L(F_i)
\end{equation}
for all $i=1,\dots,k$ and $m\geq M$ and, moreover,
\begin{equation}\label{e-2.5}
\begin{split}
 \iint_{F_i\times F_j}&\iint_{A_m(x)\times A_{p}(y)}|u-v|^{-s}\;
   d\L(u)d\L(v)d\L(x)d\L(y)\\
\leq&(1+\gamma)^3\L(A_m(z_i))\L(A_p(z_j))\iint_{F_i\times F_j}|x-y|^{-s}\;
   d\L(x)d\L(y)
 \end{split}
 \end{equation}
for all $i=1,\dots,k$ and $m,p\geq M$.

Applying Lemma~\ref{largesum} with $a_m=g_s(A_m(z_i))$ and
$\omega_m=\chi_{F_{i,m}}g_s(A_m(z_i))$ (recall \eqref{gtdef}),
Definition~\ref{sequenceAn} (iv) together with inequalities
\eqref{bidiffeobound} and \eqref{Fimeasure} imply that we may choose integers
$M_1:=M<M_2<\cdots<M_{k+1}$ recursively such that
\[
\P\bigl(\bigl\{\x\in U^\N:\sum_{m=M_i}^{M_{i+1}-1}\chi_{F_{i,m}}(x_m) g_s(A_m(z_i))
  \geq\gamma^{-1}\bigr\}\bigr)\geq 1-\frac{\gamma}k
\]
for all $i=1,\dots,k$. Let $N:=M_{k+1}$. Define
\begin{equation}\label{Omegai}
\Omega_i:=\bigl\{\x\in U^\N:\sum_{m=M_i}^{M_{i+1}-1}\chi_{F_{i,m}}(x_m) g_s(A_m(z_i))
  \geq\gamma^{-1}\bigr\}
\end{equation}
for all $i=1,\ldots,k$ and set
\begin{equation}\label{Omega}
\Omega:=\bigcap_{i=1}^k \Omega_i.
\end{equation}
Then $\Omega$ is a Borel set with $\P(\Omega)\geq 1-\gamma$.

For all $\x\in\Omega$, we define a finite Borel measure $\mu^\x$ as
\[
\mu^{\x}:=\sum_{i=1}^k\sum_{m\in S_i(\x)} c_{i,m}(\x)\L|_{F_i\cap A_m(x_m)},
\]
where
\begin{align*}
S_i(\x)&:= \{m\in \N:\; M_{i}\leq m<M_{i+1}, \; x_m\in F_{i,m}\} \text{ and}\\
c_{i,m}(\x)&:=c_i\L(F_i)\frac{\L(A_m(z_i))}{I_s(A_m(z_i))}
  \Bigl(\sum_{p\in S_i(\x)} g_s(A_p(z_i))\Bigr)^{-1}.
\end{align*}
Since $F$ and $A_i(x_i)$ are compact the measure $\mu^\x$ is supported on
$F\cap\bigcup_{i=n}^NA_i(x_i)$. Notice that if $x_m\in F_{i,m}$, the inequality
\eqref{e-2.5'} results in
\[
\L(F_i\cap A_m(x_m))\geq (1-\gamma)\L(A_m(x_m))\geq(1-\gamma)(1+\gamma)^{-1}
  \L(A_m(z_i))
\]
which, in turn, yields
\begin{align*}
\|\mu^\x\|&=\sum_{i=1}^k c_i\L(F_i)\sum_{m\in S_i(\x)}\frac{\L(A_m(z_i))
  \L(F_i\cap A_m(x_m))}{I_s(A_m(z_i))}\Bigl(\sum_{p\in S_i(\x)}
  g_s(A_p(z_i))\Bigr)^{-1}\\
&\geq(1-\gamma)(1+\gamma)^{-1}\sum_{i=1}^k c_i\L(F_i)\sum_{m\in S_i(\x)}
  \frac{\L(A_m(z_i))^2}{I_s(A_m(z_i))}\Bigr(\sum_{p\in S_i(\x)}
  g_s(A_p(z_i))\Bigr)^{-1}\\
&=(1-\gamma)(1+\gamma)^{-1}
\end{align*}
where $\| \mu^\x\|$ represents the total mass of $\mu^\x$.
Since $\|\mu^\x\|^{-1}\mu^\x\in\CP_0\bigl(F\cap\bigcup_{i=n}^NA_i(x_i)\bigr)$, we
have
\[
\Gamma_s\bigl(F\cap\bigcup_{i=n}^NA_i(x_i)\bigr)\leq I_s(\|\mu^\x\|^{-1}\mu^\x)
  =\frac{I_s(\mu^\x)}{\|\mu^\x\|^2}\leq\frac{(1+\gamma)^2}{{(1-\gamma)^2}}
  I_s(\mu^\x).
\]

In what follows, we estimate $\int_\Omega I_s(\mu^\x)\; d\P(\x)$. Set
\[
\CS_\Omega:=\bigl\{\bS=(S_i)_{i=1}^k:S_i\subset\{M_i,M_i+1,\ldots,M_{i+1}-1\}
 \text{ and }\sum_{p\in S_i} g_s(A_p(z_i))\geq\gamma^{-1}\bigr\}.
\]
For $\bS\in\CS_\Omega$, define
\begin{equation}
\label{e-ali}
\begin{split}
\pi^{-1}(\bS):=\bigcap_{i=1}^k\{\x\in U^\N:\; &x_m\in F_{i,m}\text{ if }
m\in S_i,\text{ and }
  x_m\in (F_{i,m})^c \\
&\text{if }m\in\{M_i,M_i+1,\ldots,M_{i+1}-1\}\setminus S_i\}.
\end{split}
\end{equation}
Clearly, $\pi^{-1}(\bS)$ is a Borel set.
Observe that $\Omega=\bigcup_{\bS\in\CS_\Omega}\pi^{-1}(\bS)$, where the union is
disjoint. Let
\[
J_s(A,B):=\iint_{A\times B}|x-y|^{-s}\; d\L(x)d\L(y)
\]
for all Lebesgue measurable sets $A,B\subset\R^d$. Consider $\bS\in\CS_\Omega$
and
define $Q_{S_i}:=\sum_{m\in S_i}g_s(A_m(z_i))$ for all $i=1,\dots,k$. Then
\begin{equation}\label{Isstart}
\begin{split}
\int_{\pi^{-1}(\bf S)}I_s(\mu^{\x})\; d\P(\x)
 \le &\sum_{i=1}^k\sum_{j=1}^k\sum_{m\in S_i}\sum_{p\in S_j}\frac{c_ic_j\L(F_i)\L(F_j)
   \L(A_m(z_i))\L(A_p(z_j))}{Q_{S_i}Q_{S_j}I_s(A_m(z_i))I_s(A_p(z_j))}\\
  &\times\int_{\pi^{-1}(\bf S)}J_s(A_m(x_m),A_p(x_p))\; d\P(\x).
\end{split}
\end{equation}
In order to complete the proof of Proposition~\ref{assumptionOK}, we need
two more lemmas.

\begin{lemma}\label{lem-prod}
Let $(Y,\mathcal F,\nu)$ be a probability space, and let
$u\colon Y\times Y\to\R$ and $\tilde u\colon Y\to\R$ be non-negative
measurable functions.
Let $E_1,\ldots,E_N\in\mathcal F$ with $\nu(E_i)>0$ for all
$i=1,\dots,N$. Then we have
\begin{equation*}
\int\Big(\prod_{i=1}^N\chi_{E_i}(y_i)\Big)u(y_1,y_2)\,\prod_{j=1}^N d\nu(y_j)
 =\frac{\prod_{i=1}^N\nu(E_i)}{\nu(E_1)\nu(E_2)}\int_{E_1\times E_2} u(y_1, y_2)
\,d\nu(y_1) d\nu(y_2)
\end{equation*}
and
\begin{equation*}
\int\Big(\prod_{i=1}^N \chi_{E_i}(y_i)\Big)\tilde u(y_1)\,\prod_{j=1}^N d\nu(y_j)
 =\frac{\prod_{i=1}^N \nu(E_i)}{\nu(E_1)} \int_{E_1} \tilde{u}(y_1)\; d\nu(y_1).
\end{equation*}
\end{lemma}
\begin{proof}
The equalities follow from simple calculations.
\end{proof}

We will use the Landau big O notation in the sense that, given
positive functions $g_1,g_2\colon\R\to\R$, the notation
$g_1(\gamma)\le(1+O(\gamma))g_2(\gamma)$ means that there exist
$C,\delta>0$ such
that $g_1(\gamma)\le (1+C\gamma)g_2(\gamma)$ when $0<\gamma<\delta$.

\begin{lemma}\label{lem-2.2}
Let $\bS\in\CS_\Omega$. For all $i,j=1,\dots,k$, $m\in S_i$ and
$p\in S_j$, the following properties hold:
\begin{itemize}
\item[(i)] If $m\neq p$, then
\begin{align*}
\int_{\pi^{-1}(\bf S)}&J_s(A_m(x_m),A_p(x_p))\; d\P(\x)\\
 &\le(1+O(\gamma))
  \frac{\P(\pi^{-1}(\bS))}{\L(F_i)\L(F_j)}\L(A_m(z_i))\L(A_p(z_j))J_s(F_i,F_j),
\end{align*}
\item[(ii)] If $m=p$ (which implies that $i=j$), then
\begin{align*}
\int_{\pi^{-1}(\bf S)}J_s(A_m(x_m),A_p(x_p))\; d\P(\x)\le(1+O(\gamma))
  \P(\pi^{-1}(\bS))I_s(A_m(z_i)).
\end{align*}
\end{itemize}
\end{lemma}

\begin{proof}
We begin by verifying (i). Recall that by \eqref{e-ali},
\[
\chi_{\pi^{-1}({\bf S})}(\x)=\prod_{i=1}^k \Big(\prod_{m\in S_i} \chi_{F_{i,m}}(x_m)\Big)
\cdot \Big(\prod_{n\in S_i^*} \chi_{(F_{i,n})^c}(x_n)\Big)
\]
where $S_i^*:=\{M_i,M_i+1,\ldots,M_{i+1}-1\}\setminus S_i$.  Notice also that
$\eta(F_{i,m}),\eta(F_{j,p})>0$ by \eqref{Fimeasure} and
\eqref{e-10}. Applying Lemma \ref{lem-prod}, we deduce
\begin{equation*}
\begin{aligned}
&\int_{\pi^{-1}(\bf S)}J_s(A_m(x_m),A_p(x_p))\; d\P(\x)\\
 &=\frac{\P(\pi^{-1}(\bS))}{\eta(F_{i,m})\eta(F_{j,p})}\iint_{F_{i,m}\times F_{j,p}}
   J_s(A_m(x),A_p(y))\;d\eta(x)d\eta(y)\\
 &\le\frac{(1+\gamma)^2}{(1-\gamma)^2}\frac{\P(\pi^{-1}(\bS))}{\L(F_i)\L(F_j)}
  \iint_{F_{i,m}\times F_{j,p}}\iint_{A_m(x)\times A_p(y)}|u-v|^{-s}
  &d\L(u)d\L(v)d\L(x)d\L(y)\,\\
&\phantom{e}&(\text{by \eqref{e-2.5'} and \eqref{Fimeasure}})\\
 &\le\frac{(1+\gamma)^5}{(1-\gamma)^2}\frac{\P(\pi^{-1}(\bS))}{\L(F_i)\L(F_j)}
  \L(A_m(z_i))\L(A_p(z_j))J_s(F_i,F_j).&(\text{by \eqref{e-2.5}})
 \end{aligned}
\end{equation*}

To prove (ii), we apply \eqref{e-2.5'} and Lemma \ref{lem-prod} to obtain
\begin{align*}
\int_{\pi^{-1}(\bf S)}&J_s(A_m(x_m),A_p(x_p))\; d\P(\x)=\frac{\P(\pi^{-1}(\bS))}
  {\eta(F_{i,m})}\int_{F_{i,m}}I_s(A_m(x))\; d\eta(x)\\
 &\le(1+\gamma)^2\frac{\P(\pi^{-1}(\bS))}{\L(F_{i,m})}\int_{F_{i,m}}I_s(A_m(z_i))\;
  d\L(x)
 =(1+\gamma)^2\P(\pi^{-1}(\bS))I_s(A_m(z_i)).
\end{align*}
\end{proof}

Now we continue the proof of Proposition~\ref{assumptionOK}. Recalling
\eqref{Isstart} and \eqref{Omegai}, and applying
Lemma~\ref{lem-2.2}, yields
\begin{align*}
\int_{\pi^{-1}(\bf S)}I_s(\mu^{\x})\; d\P(\x)&\le\P(\pi^{-1}(\bS))
   (1+O(\gamma))\Bigl(\sum_{i=1}^k\sum_{j=1}^kc_ic_jJ_s(F_i,F_j)\\
 &\quad+\sum_{i=1}^kc_i^2\L(F_i)^2(Q_{S_i})^{-1}\Bigr)\\
 &\le\P(\pi^{-1}(\bS))(1+O(\gamma))\Bigl(I_s(\mu)
   +\gamma\bigl(\sum_{i=1}^kc_i\L(F_i)\bigl)^2\Bigr)\\
 &\le\P(\pi^{-1}(\bS))(1+O(\gamma))(I_s(\mu)+\gamma).
\end{align*}
Thus, by the choice of $\mu$, we have
\begin{align*}
\int_\Omega I_s(\mu^\x)\; d\P(\x)&=\sum_{\bS\in\CS_\Omega}\int_{\pi^{-1}(\bf S)}
  I_s(\mu^{\x})\; d\P(\x)
\le\P(\Omega)(1+O(\gamma))(I_s(\mu)+\gamma)\\
 &\le(1+O(\gamma))(\Gamma_s(F)+\tfrac{\varepsilon}2+\gamma).
\end{align*}
The claim follows by choosing sufficiently small $\gamma$.
\end{proof}

We complete this section by proving Theorem~\ref{main}.(b).

\begin{proof}[Proof of Theorem~\ref{main}.(b)]
We start by reducing the claim to the setting of
Proposition~\ref{assumptionOK}.
We may assume that $s_0(\bA)>0$. Consider $s<s_0(\bA)$.
Since $G_s(E)=0$ for all
$E\subset\R^d$ with $\L(E)=0$, we may assume that $\L(A_n)>0$ for all
$n\in\N$ by removing sets with $\L(A_n)=0$ from the original sequence
$\bA=(A_n)_{n\in\N}$ if necessary.
Since $I_s(E)\le I_s(F)$ for $E\subset F$ and since
$\mathcal L$ is inner regular, replacing $A_n$ by a suitable subset, we may
assume that $A_n$ is compact for all $n\in\mathbb N$ and
\begin{equation}\label{e-1}
\sum_{n=1}^\infty g_s(A_n)=\infty.
\end{equation}

We proceed by constructing a sequence $(K_n)_{n\in\mathbb N}$ of compact sets
satisfying Definition~\ref{sequenceAn} (i)--(iv) such that $K_n\subset A_n$
for
all $n\in\mathbb N$. Indeed, let $(Q_i)_{i=1}^{m_1}$ be the closed dyadic
cubes with
side length $2^{-1}$ intersecting $\Delta$. Notice that for any Borel set
$E\subset\Delta$, we have $E=\cup_{i=1}^{m_1}E\cap Q_i$ and,
moreover,
there exists $i\in\{1,\dots,m_1\}$ with $\L(E\cap Q_i)\ge\frac 1{m_1}\L(E)$.
Thus,
\[
\sum_{i=1}^{m_1}g_s(E\cap Q_i)=\sum_{i=1}^{m_1}
   \frac{\L(E\cap Q_i)^2}{I_s(E\cap Q_i)}
\ge\sum_{i=1}^{m_1}\frac{\L(E\cap Q_i)^2}{I_s(E)}
\geq \frac{\L(E)^2}{(m_1)^2I_s(E)}=\frac{1}{(m_1)^2}g_s(E).
\]
It follows that
\[
\sum_{i=1}^{m_1}\sum_{j=1}^\infty g_s(A_j\cap Q_i)=\sum_{j=1}^\infty\sum_{i=1}^{m_1}
   g_s(A_j\cap Q_i)
\ge\frac{1}{(m_1)^2}\sum_{j=1}^\infty g_s(A_j)=\infty.
\]
Therefore, there exists
$k_0\in\{1,\dots,m_1\}$ such that
$\sum_{j=1}^\infty g_s(A_j\cap Q_{k_0})=\infty$.
Define $\widetilde{Q}_1:=Q_{k_0}$. We pick integers
$n_1<n_2<\cdots <n_{N_1}$ so that
\[
\L(A_{n_i}\cap\widetilde{Q}_1)>0\text{ for all }i=1,\ldots,N_1
\text{  and  }\sum_{i=1}^{N_1} g_s(A_{n_i}\cap\widetilde{Q}_1)\ge1.
\]
Since $\sum_{j=N_1+1}^\infty g_s(A_j\cap\widetilde{Q}_1)=\infty$, a similar
argument shows that there exist a dyadic cube
$\widetilde{Q}_2\subset\widetilde{Q}_1$ with side
length $2^{-2}$, and positive integers
$n_{N_1+1}<\cdots<n_{N_2}$ such that
\[
\L(A_{n_i}\cap\widetilde{Q}_2)>0\text{ for all }
i=N_{1}+1,\ldots,N_2\text{ and }\sum_{i=N_1+1}^{N_2} g_s(A_{n_i}
\cap\widetilde{Q}_2)\ge1.
\]
Repeat this process inductively. As a
result, we find a decreasing sequence $(\widetilde Q_l)_{l\in\mathbb N}$ of
dyadic cubes, an increasing sequence $(N_l)_{l\in\mathbb N}$ of integers
and an increasing  sequence $(n_l)_{l\in\mathbb N}$ of indices such that for
every $k=0, 1\ldots,$
\[
\L(A_{n_i}\cap\widetilde{Q}_{k+1})>0\text{ for all }
i=N_{k}+1,\ldots,N_{k+1}\text{ and }\sum_{j=N_k+1}^{N_{k+1}}g_s(A_{n_j}\cap
\widetilde Q_{k+1})\ge 1.
\]
Defining
$K_j:=A_{n_j}\cap \widetilde Q_{k+1}$ for every $j=N_k+1,\dots,N_{k+1}$, gives
$\sum_{j=1}^\infty g_s(K_j)=\infty$ and $\lim_{n\to\infty}\diam K_n=0$. Finally,
setting $\{y_0\}:=\bigcap_{k=1}^\infty \widetilde Q_k$, leads to
\[
\lim_{n\to\infty}\dist(y_0,K_n)=0.
\]
Hence, the sequence $(K_n)_{n\in\N}$ satisfies
Definition~\ref{sequenceAn} (i)--(iv).

Since the measure $\sigma$ determining the probability $\P$
(recall the Introduction) is not singular with respect to $\L$, there exist a
compact set $E\subset U$ such that $\sigma|_E(E)>0$, $\sigma|_E\ll \L$ and
\eqref{e-10} is satisfied with
$h:=\frac{d\sigma|_E}{d\L}$.
Let $f\colon U\times V\to\R^d$ be as in the introduction. For all
$(x,y)\in U\times V$, let
$T_x:=f(x,\cdot)^{-1}$ and $T^y:=f(\cdot,y)^{-1}$. Then for all $x\in U$,
the set
$f(U,y_0)\cap f(x,V)$ is non-empty (it always contains the point
$f(x,y_0)$) and open,
$y_0\in V_x:=T_x\bigl(f(U,y_0)\cap f(x,V)\bigr)$ and
$x\in U_x:=T^{y_0}\bigl(f(U,y_0)\cap f(x,V)\bigr)$. Thus the map
$W_x:V_x\to U_x$ defined by $W_x(v):=T^{y_0}(f(x,v))$ is a diffeomorphism with
$W_x(y_0)=x$ and
\[
\Vert DW_x\Vert\,,\,\Vert (DW_x)^{-1}\Vert\le (C_u)^2
\]
where $C_u$ is as in inequality \eqref{diffeobound}.
Clearly, the derivative of the map
$x\mapsto W_x(v)$ has the same bounds. Let $O$ be an open and bounded set
such that $E\subset O\subset\overline O\subset U$. Consider
$0<r_0<\min\{\dist(y_0,V^c),(C_u)^{-2}\dist(\overline O,U^c)\}$. Then
$B(y_0,r_0)\subset V_x$ for all $x\in O$. Thus $W:O\times B(y_0,r_0)\to\R^d$
satisfies Definition~\ref{sequenceAn} (a)--(c). Ignoring a finite number of
sets $K_n$, if necessary, we may assume that $K_n\subset B(y_0,r_0)$ for all
$n\in\N$. We conclude that all the conditions in Definition~\ref{sequenceAn}
are fulfilled.

As a result of the fact that $T^{y_0}$ is a diffeomorphism, we
conclude that
\[
\dimH\bigl(\limsup_{n\to\infty}f(x_n,K_n)\bigr)
\ge\dimH\bigl(\limsup_{n\to\infty}W(x_n,A_n)\bigr)
\]
for all $\x\in U^\N$. Here we have an inequality instead of an equality since
for $x_n\in U\setminus O$ it may happen that $K_n\not\subset V_{x_n}$.
Finally, the claim follows by combining Proposition~\ref{assumptionOK},
Lemma~\ref{lem-6} and Proposition~\ref{pro-1.5}.
\end{proof}

\section{Packing dimension of random covering sets}\label{packingdimension}

In this section, we prove Theorem~\ref{main}.(d). For the purpose of studying packing dimensions of random covering sets,
we set
\[
N_\ell^*(E):=\#\{Q\in\CQ_\ell:\L(Q\cap E)>0\}
\]
for all $E\subset\R^d$ and $\ell\in\N$.
Here the symbol $\#$ stands for the cardinality and $\CQ_\ell$ is as in
\eqref{e-CQ}. We begin with a result concerning a lower bound for
packing dimensions of intersections of decreasing sequences of compact sets.

\begin{lemma}\label{lem-pack}
Let $(E_n)_{n\in\N}$ be a decreasing sequence of compact subsets of $\R^d$
with positive Lebesgue measure. Let $s>0$. Assume that there exists a sequence
$(\ell_n)_{n\in\N}$ of natural numbers such that
\begin{equation}\label{e-3a}
N_{\ell_{n+1}}^*(Q\cap E_{n+1})\geq 2^{\ell_{n+1}s}
\end{equation}
for all $n\in\N$ and for all $Q\in\CQ_{\ell_n}$ with $\L(Q\cap E_n)>0$.
Then $\dimp\bigl(\bigcap_{n=1}^\infty E_n\bigr)\geq s$.
\end{lemma}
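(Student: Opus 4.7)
The plan is to build a Moran-type Cantor subset $F\subset\bigcap_n E_n$ using the hypothesis as a branching condition, put a natural probability measure $\mu$ on $F$ whose upper local dimension is everywhere at least $s$, and then conclude $\dimp F\ge s$ via the packing-dimension analogue of the mass distribution principle.

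One may assume $\ell_{n+1}>\ell_n$ for every $n$ and $\ell_n\to\infty$; otherwise the hypothesis with $s>0$ is vacuous. Pick $Q_1\in\CQ_{\ell_1}$ with $\L(Q_1\cap E_1)>0$ (possible since $\L(E_1)>0$) and set $\mathcal{B}_1:=\{Q_1\}$. Inductively, given a finite family $\mathcal{B}_n\subset\CQ_{\ell_n}$ of cubes $Q$ with $\L(Q\cap E_n)>0$, the hypothesis produces inside each such $Q$ at least $2^{\ell_{n+1}s}$ cubes of $\CQ_{\ell_{n+1}}$ meeting $E_{n+1}$ in positive Lebesgue measure; select exactly $M_{n+1}:=\lceil 2^{\ell_{n+1}s}\rceil$ of them, and let $\mathcal{B}_{n+1}$ be the union of the selected children. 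Put $F:=\bigcap_{n=1}^\infty\bigcup_{Q\in\mathcal{B}_n}\overline{Q}$. Since every $Q\in\mathcal{B}_n$ meets the compact set $E_n$ and $\diam Q\to 0$, each $x\in F$ is a limit of points of $E_n$ for every $n$, so $F\subset\bigcap_n E_n$.

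Define the probability measure $\mu$ on $F$ by assigning mass $|\mathcal{B}_n|^{-1}=\prod_{k=2}^n M_k^{-1}$ to each cube of $\mathcal{B}_n$ at every level; this is the standard Moran measure obtained as the weak-star limit of the level-$n$ uniform distributions. For any $x\in F$ and $r_n:=2^{-\ell_n}$, the ball $B(x,r_n)$ meets at most $3^d$ cubes of $\CQ_{\ell_n}$ and hence at most $3^d$ cubes of $\mathcal{B}_n$, so
\[
\mu(B(x,r_n))\le\frac{3^d}{|\mathcal{B}_n|}\le 3^d\prod_{k=2}^n 2^{-\ell_ks}\le 3^d\cdot 2^{-s\ell_n}=3^d\,r_n^{s}.
\]
Taking logarithms and letting $n\to\infty$ gives $\limsup_{r\to 0}\log\mu(B(x,r))/\log r\ge s$ for every $x\in F$.

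To close, I apply the packing-dimension mass distribution principle: if $\nu$ is a finite Borel measure on $\R^d$ and $F$ is Borel with $\nu(F)>0$ and $\limsup_{r\to 0}\log\nu(B(x,r))/\log r\ge s$ for $\nu$-a.e.\ $x\in F$, then $\dimp F\ge s$ (this is the packing companion of Frostman's lemma; see \cite[Chapter 6]{Mat}). Applied to $\nu=\mu$ this yields $\dimp\bigl(\bigcap_n E_n\bigr)\ge\dimp F\ge s$. The only delicate point is checking that the Moran construction is always legitimate, i.e.\ that $M_{n+1}\ge 1$ and the chosen level-$\ell_{n+1}$ cubes are genuine dyadic subcubes of their parents in $\mathcal{B}_n$; both of these are immediate from $\ell_{n+1}>\ell_n$ and from applying the hypothesis to each $Q\in\mathcal{B}_n$ individually.
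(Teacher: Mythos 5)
Your proof is correct but takes a genuinely different route from the paper's. The paper works with the nested compact sets $F_n := E_n \cap \bigcup_{Q\in\CQ_{\ell_n},\,\L(Q\cap E_n)>0}\overline{Q}$ and $F_\infty:=\bigcap_n F_n$, shows $\dimbu(V\cap F_\infty)\geq s$ for every open $V$ meeting $F_\infty$ by iterating the hypothesis inside a dyadic cube $Q$ with $\L(Q\cap F_n)>0$ and $2Q\subset V$, and then invokes the Baire category characterisation of packing dimension \cite[Proposition 3.6 and Corollary 3.9]{Fal03} to conclude $\dimp F_\infty\geq s$. You instead prune the tree to a Moran subtree by keeping exactly $M_{n+1}=\lceil 2^{\ell_{n+1}s}\rceil$ children per cube, equip it with the uniform measure $\mu$, bound $\mu(B(x,2^{-\ell_n}))\le 3^d\prod_{k=2}^n 2^{-\ell_k s}\le 3^d\,2^{-\ell_n s}$ by the $3^d$ overlap count, and invoke the packing version of the mass distribution principle. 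Your route is more self-contained at the point of the dimension estimate and avoids the Baire category argument; the paper's route avoids constructing any measure and yields the lower bound for the entire set $F_\infty$ rather than a Cantor subset, which is closer to the uniform box-count estimates used elsewhere in their Section~\ref{packingdimension}. All the steps in your argument are sound: the reduction to $\ell_{n+1}>\ell_n$ is justified (otherwise the hypothesis is unsatisfiable for $s>0$ and $\ell_{n+1}\ge1$, so the lemma is vacuous), the selected cubes are genuine dyadic subcubes of their parents, $F\subset\bigcap_n E_n$ follows from nesting plus compactness of the $E_n$, and the $\limsup$ of $\log\mu(B(x,r))/\log r$ along $r_n=2^{-\ell_n}$ gives the required upper local dimension. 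One small caveat: the packing-dimension mass distribution principle you cite (upper local dimension $\geq s$ everywhere on a set of positive $\mu$-measure implies $\dimp\geq s$) is more readily located as Proposition~2.3 of Falconer's \emph{Techniques in Fractal Geometry} than in Chapter~6 of \cite{Mat}; you should verify your reference, although the result itself is standard and correct.
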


\begin{proof}
For $n\in \N$, set
\[
F_n:=E_n\cap\Bigl(\bigcup_{\substack{Q\in\CQ_{\ell_n}\\ \L(Q\cap E_n)>0}}\overline Q\Bigr),
\]
and let $F_\infty:=\bigcap_{n=1}^\infty F_n$. Clearly, $F_n\subset E_n$ is compact
and
$\L(F_n)=\L(E_n)$. Hence, $N_{\ell_n}^*(Q\cap F_n)=N_{\ell_n}^*(Q\cap E_n)$ for all
$n\in\N$ and $Q\in\bigcup_{k=1}^\infty\CQ_k$. In particular, we have
\begin{equation}\label{e-3a'}
N_{\ell_{n+1}}^*(Q\cap F_{n+1})\geq 2^{\ell_{n+1}s}
\end{equation}
for all $n\in\N$ and $Q\in\CQ_{\ell_n}$ with $\L(Q\cap F_n)>0$.
Denoting by $\dimbu$ the upper box counting dimension, we will show that
\begin{equation}\label{e-3b}
\dimbu(V\cap F_\infty)\geq s
\end{equation}
for all open sets $V$ with $V\cap F_\infty\neq\emptyset$.
By the Baire category theorem, \eqref{e-3b} implies that $\dimp(F_\infty)\geq s$
(see for example \cite[Proposition 3.6 and Corollary 3.9]{Fal03}) and,
therefore,
$\dimp\bigl(\bigcap_{n=1}^\infty E_n\bigr)\geq s$, as desired.

To prove \eqref{e-3b}, let $V$ be an open set so that
$V\cap F_\infty\neq\emptyset$. Then there exist $n\in\N$ and $Q\in\CQ_{\ell_n}$
such that $2Q\subset V$ and $Q\cap F_n\neq\emptyset$, where $2Q$ stands for the
union of all elements $Q'\in\CQ_{\ell_n}$ with
$\overline{Q'}\cap\overline Q\neq\emptyset$. By the definition of $F_n$, there
is $Q^*\in\CQ_{\ell_n}$ such that
$\overline{Q^*}\cap\overline Q\neq\emptyset$ and
$\L(Q^*\cap F_n)>0$. Since $Q^*\subset 2Q\subset V$, replacing $Q$ by $Q^*$, if
necessary, we may assume that $\L(Q\cap F_n)>0$. Using \eqref{e-3a'}
recursively, leads to
\begin{equation}\label{e-QQ}
N_{\ell_m}^*(Q\cap F_{m})\geq 2^{\ell_{m}s}
\end{equation}
for all $m>n$. Furthermore, we claim that for every $m>n$,
\begin{equation}\label{e-QQ'}
\#\{Q'\in\CQ_{\ell_m}:\overline{Q'}\cap Q\cap F_\infty\}
\geq N_{\ell_m}^*(Q\cap F_{m})\geq 2^{\ell_ms},
\end{equation}
from which we conclude that $\dimbu(Q\cap F_\infty)\geq s$ and, therefore,
$\dimbu(V\cap F_\infty)\geq s$.
To prove \eqref{e-QQ'}, it follows from \eqref{e-QQ} that it is enough
to show
that $\overline{Q'}\cap F_\infty\neq\emptyset$ for all
$Q'\in\CQ_{\ell_m}$ with $\L(Q'\cap F_m)>0$.
For this purpose, consider $Q'\in\CQ_{\ell_m}$ with  $\L(Q'\cap F_m)>0$. By
\eqref{e-3a'}, there exists $Q'_1\in\CQ_{\ell_{m+1}}$ such that
$Q'_1\subset Q'$ and $\L(Q'_1\cap F_{m+1})>0$.  Using this fact recursively,
we
see that for every $p\in\N$, there exists $Q'_{p}\in\CQ_{\ell_{m+p}}$ such that
$Q'_p\subset Q'_{p-1}$ and $\L(Q'_p\cap F_{m+p})>0$. Hence, we have
$\L(Q'\cap F_{m+p})>0$ for all $p\in\N$, which implies that
$\overline{Q'}\cap F_\infty\neq\emptyset$. This completes the proof of
the lemma.
\end{proof}

Before applying the above result to estimate packing dimensions of
random covering sets, we prove several lemmas.

\begin{lemma}\label{lem-3.0}
For all $A\in\B(\R^d)$ and $\ell\in\Z$, we have
$N_\ell^*(A)\geq 2^{\ell d}\L(A)$.
\end{lemma}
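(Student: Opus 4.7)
The plan is extremely short: the inequality is a direct consequence of the fact that the dyadic cubes $\mathcal{Q}_\ell$ partition $\mathbb{R}^d$ and each cube has Lebesgue measure exactly $2^{-\ell d}$. I would first observe that since $\mathcal{Q}_\ell$ consists of pairwise disjoint half-open cubes whose union is $\mathbb{R}^d$, we may write
\[
\mathcal{L}(A) = \sum_{Q \in \mathcal{Q}_\ell} \mathcal{L}(A \cap Q) = \sum_{\substack{Q \in \mathcal{Q}_\ell \\ \mathcal{L}(A \cap Q) > 0}} \mathcal{L}(A \cap Q).
\]
Next I would bound each summand by $\mathcal{L}(Q) = 2^{-\ell d}$, which yields
\[
\mathcal{L}(A) \le N_\ell^*(A) \cdot 2^{-\ell d},
\]
and multiplying through by $2^{\ell d}$ gives the claimed inequality $N_\ell^*(A) \ge 2^{\ell d} \mathcal{L}(A)$.

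The only mildly subtle point is the case $\mathcal{L}(A) = \infty$: the chain of equalities above still makes sense in $[0,\infty]$, and since each term $\mathcal{L}(A \cap Q)$ is at most $2^{-\ell d} < \infty$, the sum can only be infinite if infinitely many cubes $Q$ contribute, in which case $N_\ell^*(A) = \infty$ and the stated inequality holds trivially. There is no real obstacle here; the result is essentially a one-line counting estimate included for use in the forthcoming proof of Theorem~\ref{main}.(d).
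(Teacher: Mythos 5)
Your argument is correct and is exactly the ``simple volume argument'' that the paper invokes without writing out: partition $\R^d$ by the countably many disjoint cubes of $\CQ_\ell$, use countable additivity of $\L$ on the Borel set $A$, and bound each nonzero term $\L(A\cap Q)$ by $\L(Q)=2^{-\ell d}$. Nothing further is needed, and your remark about the infinite-measure case is a harmless but correct observation.
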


\begin{proof}
The claim follows directly from a simple volume argument.
\end{proof}

Let $U\subset\R^d$ be open, and let $(A_n(x))_{n\in\N}$ be a sequence of
compact-set-valued functions defined on $U$ satisfying the conditions (C-1) and
(C-2) from Section~\ref{regularenergy}. Let $\eta\in\CP(U)$ and set
$\P:=\prod_{i=1}^\infty\eta$.

\begin{lemma}\label{lem-3.1}
Let $E\in\B(\R^d)$ with $0<\L(E)<\infty$, and let $\ell\in\Z$.
Then the mapping
\[
(x_i)_{i=1}^n\mapsto N_\ell^*\bigl(E\cap\bigcup_{i=1}^n A_i(x_i)\bigr)
\]
is lower semi-continuous on $U^n$ for all $n\in\N$. Moreover, the mapping
\[
\x\mapsto N_\ell^*\bigl(E\cap\bigcup_{i=1}^\infty A_i(x_i)\bigr)
\]
is Borel measurable on $U^\N$.
\end{lemma}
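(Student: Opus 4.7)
The plan is to decompose $N_\ell^*$ cube by cube. For each $Q\in\CQ_\ell$ and every $n\in\N$, set
\[
\phi_Q^{(n)}(x_1,\ldots,x_n):=\mathbf{1}_{\{\L(Q\cap E\cap\bigcup_{i=1}^n A_i(x_i))>0\}},
\]
so that $N_\ell^*\bigl(E\cap\bigcup_{i=1}^n A_i(x_i)\bigr)=\sum_{Q\in\CQ_\ell}\phi_Q^{(n)}(x_1,\ldots,x_n)$. First I would check that for each fixed $Q$ the map $(y_i)_{i=1}^n\mapsto \L(Q\cap E\cap\bigcup_{i=1}^n A_i(y_i))$ is continuous on $U^n$. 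This follows from (C-2) together with the elementary bound
\[
\bigl|\L\bigl(Q\cap E\cap\textstyle\bigcup_{i}A_i(y_i)\bigr)-\L\bigl(Q\cap E\cap\bigcup_{i}A_i(x_i)\bigr)\bigr|\le\sum_{i=1}^n \L\bigl((A_i(x_i)\setminus A_i(y_i))\cup(A_i(y_i)\setminus A_i(x_i))\bigr),
\]
whose right-hand side tends to $0$ as $(y_i)\to(x_i)$.

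Composing this continuous function with the lower semi-continuous function $t\mapsto\mathbf{1}_{\{t>0\}}$ on $[0,\infty)$ shows each $\phi_Q^{(n)}$ is lower semi-continuous on $U^n$. Since a sum of non-negative lower semi-continuous functions is itself lower semi-continuous (with values in $[0,\infty]$), this yields the first claim. The only subtlety is that $\L(E)<\infty$ does not force $N_\ell^*$ to be finite, but lower semi-continuity into $[0,\infty]$ is exactly what we need: for any $M<N_\ell^*\bigl(E\cap\bigcup_{i=1}^n A_i(x_i)\bigr)$, finitely many cubes $Q_1,\ldots,Q_k$ already contribute more than $M$ at $\x$, and by lower semi-continuity of each individual $\phi_{Q_j}^{(n)}$ this persists on a neighbourhood of $\x$.

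For the Borel measurability on $U^\N$, I would use monotone convergence: since $\bigcup_{i=1}^n A_i(x_i)\uparrow\bigcup_{i=1}^\infty A_i(x_i)$ as $n\to\infty$, we have $\L\bigl(Q\cap E\cap\bigcup_{i=1}^\infty A_i(x_i)\bigr)>0$ if and only if the same quantity is positive for some finite $n$. Thus with $\phi_Q^{\infty}(\x):=\mathbf{1}_{\{\L(Q\cap E\cap\bigcup_{i=1}^\infty A_i(x_i))>0\}}$ and the projections $\pi_n\colon U^\N\to U^n$ we obtain $\phi_Q^\infty=\sup_{n\in\N}\phi_Q^{(n)}\circ\pi_n$. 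Each $\phi_Q^{(n)}\circ\pi_n$ is Borel on $U^\N$ (being lower semi-continuous), hence $\phi_Q^\infty$ is Borel, and so is the countable sum $N_\ell^*\bigl(E\cap\bigcup_{i=1}^\infty A_i(x_i)\bigr)=\sum_{Q\in\CQ_\ell}\phi_Q^\infty$.

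There is no real obstacle here; the argument is essentially bookkeeping, combining (C-2) with the elementary lower semi-continuity of $\mathbf{1}_{\{\cdot>0\}}$ and the countable stability properties of Borel functions.
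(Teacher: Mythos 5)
Your proof is correct and follows essentially the same route as the paper: both arguments use (C-2) to show that positivity of $\L\bigl(Q\cap E\cap\bigcup_{i=1}^n A_i(\cdot)\bigr)$ persists in a neighbourhood (giving lower semi-continuity cube by cube) and then pass to the infinite union via the monotone limit $N_\ell^*\bigl(E\cap\bigcup_{i=1}^\infty A_i(x_i)\bigr)=\lim_{n\to\infty}N_\ell^*\bigl(E\cap\bigcup_{i=1}^n A_i(x_i)\bigr)$. Your explicit treatment of the case where the count is infinite is a small point the paper glosses over, but it changes nothing substantive.
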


\begin{proof}
It suffices to prove the first part of the lemma; the second part follows
directly from the first one and the following easily-checked identity:
\begin{equation}\label{e-3.1'}
N_\ell^*\bigl(E\cap\bigcup_{i=1}^\infty A_i(x_i)\bigr)
=\lim_{n\to\infty}N_\ell^*\bigl(E\cap\bigcup_{i=1}^nA_i(x_i)\bigr).
\end{equation}

Let $(x_i)_{i=1}^n\in U^n$ and write
$k:=N_\ell^*\bigl(E\cap\bigcup_{i=1}^nA_i(x_i)\bigr)$ for short. Then there are
$k$ different elements in $\CQ_{\ell}$, say $Q_1,\ldots,Q_k$, such that
$\L\bigl(Q_j\cap\bigcup_{i=1}^n A_i(x_i)\bigr)>0$ for all $j=1,\ldots,k$. It
follows from
(C-2) that
$\L\bigl(\bigcup_{i=1}^n A_i(x_i)\setminus\bigcup_{j=1}^n A_j(y_j)\bigr)$
is close to $0$
when $(y_i)_{i=1}^n\in U^n$ is close to $(x_i)_{i=1}^n$ and,
therefore, when $(y_i)_{i=1}^n$ is in a small neighbourhood of
$(x_i)_{i=1}^n$, we have $\L\bigl(Q_j\cap\bigcup_{i=1}^n A_i(y_i)\bigr)>0$ for all
$j=1,\ldots,k$. Hence,
$N_\ell^*\bigl(E\cap\bigcup_{i=1}^nA_i(y_i)\bigr)\geq k$, concluding the proof
of lower semi-continuity.
\end{proof}

The following result may be regarded as an analogy of Proposition~\ref{pro-1.5}.

\begin{proposition}\label{pro-3.1}
Let $E\subset U$ be compact with $\eta(E)>0$. Suppose that $\eta|_E\ll\L$.
Moreover, assume that for every $\ell,n\in\N$ and for every compact sets
$F\subset E$
with $\eta(F)>0$,
\begin{equation}\label{e-3.5}
N^*_\ell\bigl(F\cap\bigcup_{i=n}^\infty A_i(x_i)\bigr)=N_\ell^*(F)
\end{equation}
for $\P$-almost all $\x\in U^\N$. Then
\[
\dimp\bigl(\limsup_{n\to\infty}A_n(x_n)\bigr)=d
\]
for $\P$-almost all $\x\in U^\N$.
\end{proposition}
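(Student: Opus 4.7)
The proof follows the same inductive architecture as Proposition~\ref{pro-1.5}, but with the minimal regular energy replaced by the dyadic counting functional $N_\ell^*$ and Lemma~\ref{lem-1.0} replaced by Lemma~\ref{lem-pack}. Fix $0<s<d$; since $\{\dimp(\limsup_n A_n(x_n))\ge s\}$ is a tail event with respect to the i.i.d.~product $(U^\N,\P)$, Kolmogorov's $0$-$1$ law reduces the claim to showing this event has positive probability, and letting $s\uparrow d$ together with the trivial bound $\dimp\le d$ then yields the conclusion. By passing to a compact subset of $E$ on which the density $h:=d\eta|_E/d\L$ is bounded away from zero, we may further assume that $\L(F)>0$ implies $\eta(F)>0$ for every compact $F\subset E$, so the hypothesis \eqref{e-3.5} becomes applicable to any such $F$.

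The plan is to construct inductively strictly increasing integer sequences $(N_n)_{n\in\N}$ and $(\ell_n)_{n\in\N}$, together with Borel sets $\Lambda_n\subset U^{N_n}$ with $\P(\Lambda_n)>1-\sum_{k=1}^n 2^{-k-1}$ and $\Lambda_n\subset\Lambda_{n-1}\times U^{N_n-N_{n-1}}$, such that, writing $E_n(\x):=E\cap\bigcap_{k=1}^n\bigcup_{i=N_{k-1}+1}^{N_k} A_i(x_i)$ (with $E_0:=E$, $N_0:=\ell_0:=0$), every $\x$ with initial segment in $\Lambda_n$ satisfies $\L(E_n(\x))>0$ and
\[
N_{\ell_n}^*\bigl(Q\cap E_n(\x)\bigr)\ge 2^{\ell_n s}\quad\text{for every }Q\in\CQ_{\ell_{n-1}}\text{ with }\L(Q\cap E_{n-1}(\x))>0.
\]
For the inductive step, fix $\x'=(x_1,\dots,x_{N_{n-1}})\in\Lambda_{n-1}$, list the finitely many cubes $Q_1,\dots,Q_m\in\CQ_{\ell_{n-1}}$ intersecting $E_{n-1}(\x')$ in positive Lebesgue measure, and pick compact sets $F_j\subset Q_j\cap E_{n-1}(\x')$ with $\L(F_j)>0$. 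Choose $\ell_n>\ell_{n-1}$ (initially as a Borel function of $\x'$, then discretized to a fixed integer on a Borel subset of $\Lambda_{n-1}$ of near-full probability) so large that Lemma~\ref{lem-3.0} gives $N_{\ell_n}^*(F_j)\ge 2^{\ell_n d}\L(F_j)\ge 2^{\ell_n s+1}$ for every $j$. Applying \eqref{e-3.5} to each $F_j$ with the index $N_{n-1}+1$ yields
\[
N_{\ell_n}^*\Bigl(F_j\cap\bigcup_{i=N_{n-1}+1}^\infty A_i(x_i)\Bigr)=N_{\ell_n}^*(F_j)\ge 2^{\ell_n s+1}
\]
for $\P$-almost every $\x$; combining this with the monotone limit identity \eqref{e-3.1'}, the Borel measurability supplied by Lemma~\ref{lem-3.1}, and Fubini's theorem produces an integer $N_n>N_{n-1}$ and a Borel set $\Lambda_n$ of the required form and probability on which $N_{\ell_n}^*\bigl(F_j\cap\bigcup_{i=N_{n-1}+1}^{N_n} A_i(x_i)\bigr)\ge 2^{\ell_n s}$ holds simultaneously for all $j=1,\dots,m$. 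Since $F_j\subset Q_j$, this transfers to $N_{\ell_n}^*(Q_j\cap E_n(\x))\ge 2^{\ell_n s}$, closing the induction.

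Setting $\Omega:=\bigcap_n(\Lambda_n\times U^\N)$ gives $\P(\Omega)\ge 1/2$; on $\Omega$ the decreasing compact sequence $(E_n(\x))_{n\in\N}$ satisfies the hypothesis \eqref{e-3a} of Lemma~\ref{lem-pack}, so $\dimp(\bigcap_n E_n(\x))\ge s$, and since $\bigcap_n E_n(\x)\subset\limsup_n A_n(x_n)$ the desired lower bound holds on $\Omega$ and hence $\P$-a.s.\ by the tail argument. The principal technical obstacle is the random dependence of the scale $\ell_n$ on the realization $\x'\in\Lambda_{n-1}$; unlike in Proposition~\ref{pro-1.5}, here $\ell_n$ must be chosen large relative to $-\log\min_j\L(F_j)$, which is itself random, and this is handled by the same Borel-measurability-and-truncation device that appears in the inductive passage following \eqref{Gammasineq} in the proof of Proposition~\ref{pro-1.5}.
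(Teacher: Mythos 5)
Your construction is, at its core, the same as the paper's: the same induction over scales $\ell_n$ and time blocks $\{N_{n-1}+1,\dots,N_n\}$, the same use of Lemma~\ref{lem-3.0} to convert a uniform positive lower bound on the cube measures $\L(Q\cap E_{n-1}(\x))$ into a dyadic count $\ge 2^{\ell_n s}$ at a deeper scale, the same appeal to the hypothesis \eqref{e-3.5} combined with \eqref{e-3.1'}, Lemma~\ref{lem-3.1} and Fubini to truncate the infinite union at a finite $N_n$ on a Borel set of large measure, and the same conclusion via Lemma~\ref{lem-pack}. Your ``Borel-measurability-and-truncation device'' for the random scale $\ell_n$ is exactly the paper's introduction of the minimum $\tau_n$ of the positive cube measures, its continuity via (C-2), and its truncation from below by some $\gamma_n>0$ on a subset of near-full measure. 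One cosmetic simplification: since $E_{n-1}(\x')$ is already compact, you can take $F_j:=\overline{Q_j}\cap E_{n-1}(\x')$ canonically (the paper does essentially this, using $N^*_\ell(\overline Q\cap A)=N^*_\ell(Q\cap A)$), which removes any measurable-selection issue in treating $\min_j\L(F_j)$ as a function of $\x'$.

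The one step that does not hold up as written is the opening reduction via Kolmogorov's zero--one law. The event $\{\x:\dimp(\limsup_{n}A_n(x_n))\ge s\}$ is indeed invariant under changes of finitely many coordinates, but to invoke the zero--one law you must first know that it is measurable, and this is precisely the issue the paper singles out as unresolved in its closing section: the measurability of the level sets of $\x\mapsto\dim\bE(\x,\bA)$ is not established for general measurable (or even compact) generating sets without additional descriptive-set-theoretic work, and the authors deliberately avoid the zero--one law throughout. The detour is also unnecessary: your induction already produces, for any prescribed $\varepsilon>0$, a Borel set $\Omega$ with $\P(\Omega)\ge 1-\varepsilon$ on which $\dimp\ge s$ (replace the error budget $2^{-k-1}$ by $\varepsilon 2^{-k-1}$); taking a countable union over $\varepsilon=1/m$ and then intersecting over $s=d-1/j$ shows that the exceptional set is contained in a $\P$-null set, which suffices since $\P$ is complete. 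With that repair your argument is correct and coincides with the paper's proof.
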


\begin{proof}
Replacing $E$ by a compact subset, if necessary, we may assume that
$0<\frac{d\eta|_E}{d\L}(x)<\infty$ for all $x\in E$. Thus,
for all $F\subset E$, we have
\begin{equation}\label{Lequiveta}
\L(F)>0\text{ if and only if }\eta(F)>0.
\end{equation}
Let $\varepsilon,\delta>0$. It suffices to verify that
\begin{equation}\label{e-3d}
\P\bigl(\bigl\{\x\in U^\N:\dimp\bigl(\limsup_{n\to\infty}A_n(x_n)\bigr)
\geq d-\delta\big\}\bigr)\geq 1-\varepsilon.
\end{equation}
For this purpose, we are going to construct a Borel set
$\Omega\subset U^\N$ with $\P(\Omega)>1-\varepsilon$, and two sequences
$(\ell_k)_{k\in\N}$ and $(m_k)_{k\in\N}$ of natural numbers such that
for all $\x\in\Omega$, $k\in\N$ and $Q\in\CQ_{\ell_k}$, we have
\begin{equation}\label{e-Q}
N_{\ell_{k+1}}^*\bigl(Q\cap E\cap\bigcap_{j=1}^{k+1}\bigcup_{i=m_j+1}^{m_{j+1}}A_i(x_i)
\bigr)\geq 2^{\ell_{k+1}(d-\delta)}
\end{equation}
provided that
$\L\bigl(Q\cap E\cap\bigcap_{j=1}^k\bigcup_{i=m_j+1}^{m_{j+1}}A_i(x_i)\bigr)>0$.
By Lemma~\ref{lem-pack}, this implies that
\[
\dimp\bigl(\limsup_{n\to\infty}A_n(x_n)\bigr)\geq d-\delta
\]
for all $\x\in\Omega$, from which \eqref{e-3d} follows.

Now we present our construction. Set $\ell_1:=1$ and $m_1:=1$. Notice that
\[
\gamma_1:=\min\{\L(Q\cap E):Q\in\CQ_{\ell_1}\text{ and }\L(Q\cap E)>0\}>0.
\]
Choosing a large integer $\ell_2>\ell_1$ so that $2^{-\ell_2\delta}<\gamma_1$, it
follows from Lemma~\ref{lem-3.0} that
\begin{equation}\label{e-N1}
N_{\ell_2}^*(Q\cap E)\geq 2^{\ell_2 d}\gamma_1>2^{\ell_2(d-\delta)}
\end{equation}
for all $Q\in\CQ_{\ell_1}$ with $\L(Q\cap E)>0$.
Hence, by \eqref{e-3.5}, for $\P$-almost all $\x\in U^\N$ and for
all $Q\in\CQ_{\ell_1}$ with $\L(Q\cap E)>0$, we have
\[
N_{\ell_2}^*\bigl(Q\cap E\cap\bigcup_{i=m_1+1}^{\infty}A_i(x_i)\bigr)
=N_{\ell_2}^*(Q\cap E)>2^{\ell_2(d-\delta)},
\]
where we used \eqref{Lequiveta} and the fact that
$N_\ell^*(\overline Q\cap A)=N_\ell^*(Q\cap A)$ for
all $\ell\in\N$, $Q\in\CQ_\ell$ and $A\subset\R^d$. By
\eqref{e-3.1'} and Lemma~\ref{lem-3.1}, we find a large integer $m_2>m_1$
and a Borel set $\Lambda_2\subset U^{m_2}$
with $\eta^{m_2}(\Lambda_2)>1-\frac{\varepsilon}2$ such that for all
$(x_1,\ldots,x_{m_2})\in\Lambda_2$
\[
N_{\ell_2}^*\bigl(Q\cap E\cap\bigcup_{i=m_1+1}^{m_2}A_i(x_i)\bigr)>
 2^{\ell_2(d-\delta)}
\]
for all $Q\in\CQ_{\ell_1}$ with $\L(Q\cap E)>0$.
Define a mapping $\tau_2:\Lambda_2\to (0,\infty)$ by
\begin{align*}
\tau_2(x_1,\ldots,x_{m_2}):=\min\bigl\{\L\bigl(Q\cap E\cap &\bigl(
   \bigcup_{i=m_1+1}^{m_2} A_i(x_i)\bigr)\bigr):Q\in\CQ_{\ell_2}\\
 &\text{ with }\L\bigl(Q\cap E\cap\bigl(\bigcup_{i=m_1+1}^{m_2}A_i(x_i)\bigl)\bigr)
   >0\bigr\}.
\end{align*}
By (C-2), the function $\tau_2$ is continuous and, hence, Borel measurable on
$\Lambda_2$. Since $\tau_2(x_1,\dots,x_{m_2})>0$ for all
$(x_1,\dots,x_{m_2})\in\Lambda_2$, there exist $\gamma_2>0$
and a Borel set $\Lambda_2'\subset\Lambda_2$ such that
\[
\eta^{m_2}(\Lambda_2')>\eta^{m_2}(\Lambda_2)-\frac{\varepsilon}6
>1-\frac{2\varepsilon}3
\]
and
\[
\tau_2(x_1,\ldots,x_{m_2})\geq\gamma_2
\]
for all $(x_1,\ldots,x_{m_2})\in\Lambda'_2$.
Choose $\ell_3>\ell_2$ so that $2^{-\ell_3\delta}<\gamma_2$.
Lemma~\ref{lem-3.0} implies that for all $(x_1,\ldots, x_{m_2})\in\Lambda'_2$
and $Q\in\CQ_{\ell_3}$,
\begin{align*}
N_{\ell_3}^*\bigl(Q\cap E\cap\bigl(\bigcup_{i=m_1+1}^{m_2}A_i(x_i)\bigr)\bigr)
\geq 2^{\ell_3d}\gamma_2>2^{\ell_3(d-\delta)}
\end{align*}
provided that $\L\bigl(Q\cap E\cap(\bigcup_{i=m_1+1}^{m_2}A_i(x_i))\bigr)>0$.
Again, by \eqref{e-3.5}, we find $m_3>m_2$ and a Borel set
$\Lambda_3\subset\Lambda_2'\times\prod_{i=m_2+1}^{m_3}U\subset\Lambda_2
\times\prod_{i=m_2+1}^{m_3}U
\subset U^{m_3}$ such that
\[
\eta^{m_3}(\Lambda_3)>\eta^{m_2}(\Lambda_2')-\frac{\varepsilon}{12}
 >1-\frac{3\varepsilon}4,
\]
and, moreover, for all $(x_1,\ldots, x_{m_3})\in\Lambda_{m_3}$ and
$Q\in\CQ_{\ell_3}$,
\begin{align*}
N_{\ell_3}^*\bigl(Q\cap E\cap\bigcap_{j=1}^2\bigl(\bigcup_{i=m_j+1}^{m_{j+1}}A_i(x_i)
   \bigr)\bigr)>2^{\ell_3(d-\delta)}
 \end{align*}
provided that $\L\bigl(Q\cap E\cap(\bigcup_{i=m_1}^{m_2}A_i(x_i))\bigr)>0$.

Continuing the above process, we construct recursively two increasing
sequences $(\ell_k)_{k\in\N}$ and $(m_k)_{k\in\N}$ of integers
and a sequence $(\Lambda_k)_{k\in\mathbb N}$ of
Borel sets such that $\Lambda_k\subset U^{m_k}$,
$\Lambda_{k+1}\subset\Lambda_k\times\prod_{i=m_k+1}^{m_{k+1}}U$,
$\eta^{m_k}(\Lambda_k)>1-\frac{(2k-1)\varepsilon}{2k}$ and inequality
\eqref{e-Q} holds for all $(x_1,\ldots,x_{m_{k+1}})\in\Lambda_{k+1}$. Setting
$\Omega:=\bigcap_{k=1}^\infty(\Lambda_k\times\prod_{i=m_k+1}^\infty U)$, gives
$\P(\Omega)=\lim_{k\to\infty}\eta^{m_k}(\Lambda_k)\geq 1-\varepsilon$ and,
moreover, \eqref{e-Q} holds for all $\x\in\Omega$.
This completes the proof.
\end{proof}

Now we are ready to prove our main result on the packing dimension of random
covering sets.

\begin{theorem}\label{mainpacking}
Let $E\subset\R^d$ be compact with $\eta(E)>0$. Suppose that $\eta|_E$ is
equivalent with $\L|_E$. Let $(A_n(x))_{n\in\N}$ be a sequence of
compact-set-valued functions defined on $U$  satisfying the conditions (C-1)
and (C-2) from Section~\ref{regularenergy}. In addition, suppose that for all
compact sets  $F\subset E$ with $\L(F)>0$
\begin{equation}\label{e-pa}
\sum_{n=1}^\infty\eta\bigl(\{x\in F:\L(F\cap A_n(x))>0\}\bigr)=\infty.
\end{equation}
Then for $\P$-almost all $\x\in U^\N$,
\[
\dimp\bigl(\limsup_{n\to\infty}A_n(x_n)\bigr)=d.
\]
\end{theorem}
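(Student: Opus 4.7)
The plan is to derive the theorem directly from Proposition~\ref{pro-3.1}, so the work reduces to verifying its hypothesis \eqref{e-3.5}: for every $\ell,n\in\N$ and every compact $F\subset E$ with $\eta(F)>0$, the equality $N_\ell^*\bigl(F\cap\bigcup_{i=n}^\infty A_i(x_i)\bigr)=N_\ell^*(F)$ holds for $\P$-almost all $\x\in U^\N$. Since $\eta|_E$ and $\L|_E$ are equivalent, $\eta(F)>0$ and $\L(F)>0$ coincide for compact $F\subset E$, a fact I will use freely.

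Fix $\ell$, $n$, and such an $F$, and enumerate the cubes $Q_1,\ldots,Q_k\in\CQ_\ell$ satisfying $\L(Q_j\cap F)>0$, so that $k=N_\ell^*(F)$. For each $j$, the compact set $F_j:=\overline{Q_j}\cap F\subset E$ satisfies $\L(F_j)>0$ since $\L(\partial Q_j)=0$. Applying hypothesis \eqref{e-pa} to $F_j$ yields
\[
\sum_{m=1}^\infty\eta\bigl(\{x\in F_j:\L(F_j\cap A_m(x))>0\}\bigr)=\infty.
\]
The events $B_{j,m}:=\{\x\in U^\N:\L(F_j\cap A_m(x_m))>0\}$ are $\P$-independent since each depends on a distinct coordinate of $\x$, and
\[
\P(B_{j,m})\geq\eta\bigl(\{x\in F_j:\L(F_j\cap A_m(x))>0\}\bigr),
\]
so $\sum_m\P(B_{j,m})=\infty$. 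The second Borel--Cantelli lemma then gives that, $\P$-almost surely, $\L(F_j\cap A_m(x_m))>0$ for infinitely many $m$, in particular for some $m\geq n$.

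Intersecting these full-probability events over the finitely many indices $j=1,\ldots,k$, we obtain that $\P$-almost surely every $Q_j$ satisfies $\L(Q_j\cap F\cap A_m(x_m))>0$ for some $m\geq n$. Hence $N_\ell^*\bigl(F\cap\bigcup_{i=n}^\infty A_i(x_i)\bigr)\geq k$; the reverse inequality is automatic from the inclusion $F\cap\bigcup_{i=n}^\infty A_i(x_i)\subset F$ and monotonicity of $N_\ell^*$. Thus \eqref{e-3.5} is verified and Proposition~\ref{pro-3.1} delivers the conclusion. The sole measurability point --- Borel measurability of $x\mapsto\L(F_j\cap A_m(x))$ --- was already handled in the first paragraph of the proof of Lemma~\ref{prod}, so there is no substantive obstacle: the theorem is essentially a packaging of Proposition~\ref{pro-3.1} with the divergence part of Borel--Cantelli, once one reduces from the arbitrary compact set $F$ to its finitely many dyadic pieces $F_j$.
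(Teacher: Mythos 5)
Your argument is correct and is essentially the paper's own proof: both reduce the theorem to verifying hypothesis \eqref{e-3.5} of Proposition~\ref{pro-3.1}, and both do so by applying \eqref{e-pa} to $\overline Q\cap F$ for each dyadic cube $Q\in\CQ_\ell$ meeting $F$ in positive measure and invoking the second Borel--Cantelli lemma for the independent coordinate events. Your version merely spells out the enumeration of the cubes and the intersection of the finitely many full-measure events, which the paper leaves implicit.
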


\begin{proof}
Let $\ell\in\N$, and let $F\subset E$ be compact with $\L(F)>0$. By
Proposition~\ref{pro-3.1}, it is sufficient to prove that for
all $n\in\N$,
\begin{equation}\label{e-equ}
N_\ell^*\bigl(F\cap\bigcup_{i=n}^\infty A_i(x_i)\bigr)=N_\ell^*(F)
\end{equation}
for $\P$-almost all $\x\in U^\N$.
Note that \eqref{e-equ} is equivalent to the statement that for all
$Q\in\CQ_\ell$ with $\L(Q\cap F)>0$,
\begin{equation}\label{e-end}
\L\bigl(Q\cap F\cap\bigcup_{i=n}^\infty A_i(x_i)\bigr)>0\text{ for }
\P\text{-almost all }\x\in U^\N.
\end{equation}
  Fix $Q\in\CQ_\ell$ with $\L(Q\cap F)>0$.  For all $k\in\mathbb N$,
we consider the independent events
\[
E_k:=\{x_k\in Q\cap F:\L(Q\cap F\cap A_k(x_k))>0\}.
\]
Replacing $F$ by $\overline Q\cap F$ in \eqref{e-pa},
we have $\sum_{k=1}^\infty \eta(E_k)=\infty$.
Applying the second Borel-Cantelli lemma, yields
\eqref{e-end}.
 \end{proof}

We complete this section by proving Theorem~\ref{main}.(d).

\begin{proof}[Proof of Theorem~\ref{main}.(d)]
Recall from the introduction that $A_n(x_n)=f(x_n,A_n)$. Since $\L$ is inner
regular, we may assume that the sets $A_n$ are compact with
$\L(A_n)>0$ and properties (C-1) and (C-2) are satisfied. Let $F\subset U$ be
a compact set with $\L(F)>0$ such that $\sigma|_E$ is equivalent with
$\L|_F$.
As in the proof of Theorem~\ref{main}.(b), we
may replace $f(x_n,A_n)$ by $W(x_n,A_n)$. Then
\eqref{e-pa} follows from Lemma~\ref{prod}. Hence,
Theorem~\ref{mainpacking} implies the claim.
\end{proof}

\section{Proof of Corollary~\ref{corollary} and examples}\label{examples}

The aim of this section is to verify Corollary~\ref{corollary} and to discuss
the sharpness of our results. We begin by
proving Corollary~\ref{corollary} as
a consequence of Theorem~\ref{main}.

\begin{proof}[Proof of Corollary~\ref{corollary}]
Observe that for all sequences $(E_n)_{n\in\mathbb N}$ and $(F_n)_{n\in\mathbb N}$
of subsets of ${\pmb N}$, we have
$\limsup_{n\to\infty}(E_n\cup F_n)=(\limsup_{n\to\infty} E_n)\cup
 (\limsup_{n\to\infty} F_n)$.
Therefore, covering ${\pmb M}$ and ${\pmb N}$ by a finite number of coordinate
charts, we may assume that $f:{\pmb K}\times V\to\R^d$ satisfies the
assumptions in Theorem~\ref{main} with the exception that $f(\cdot,y)$ is
not necessarily injective.
Recall that as long as \eqref{diffeobound} is valid,
the dependence of $f$ on the first coordinate plays no role when
proving that $t_0(\bA)$ is an upper bound for the dimension and
$s_0(\bA)=t_0(\bA)$ (see Section~\ref{upperbound}). In Sections
\ref{regularenergy}--\ref{packingdimension}, where the lower bounds are
proven, we restrict our
considerations into a bounded set where $\sigma$ is absolutely continuous with
respect to the Lebesgue measure. We deduce that, covering ${\pmb K}$ by
a finite number
of coordinate charts, we may assume that $f:U\times V\to\R^d$ is as in
Theorem~\ref{main} and, therefore, Corollary~\ref{corollary} follows from
Theorem~\ref{main}.
\end{proof}

We continue by constructing examples that demonstrate the sharpness of our
results.
We begin with showing that the lower bound proven by Persson (see
\eqref{Persson}) is not always sharp.

\begin{example}\label{G>g}
Let $Q_1,Q_2\subset\R^d$ be disjoint open cubes
with side lengths $r_1$ and $r_2$, respectively. Let $0<\rho<1$.
Divide $Q_2$ into $2^{nd}$ subcubes $Q_2^j$ and set
$F_{Q_2}:=\bigcup_{j=1}^{2^{nd}}\rho Q_2^j$, where $\rho Q$ is the concentric cube
with $Q$ having side length $\rho$ times that of $Q$. Define
$A:=Q_1\cup F_{Q_2}$. Using the change of variables $x'=r_ix$ for $i=1,2$, one
easily sees that $I_t(Q_1)=C_1r_1^{-t}\mathcal L(Q_1)^2$ and
$I_t({ F}_{Q_2})\le C_2r_2^{-t}\mathcal L({ F}_{Q_2})^2$, where $C_1$ and $C_2$ are
constants depending only on $d$ and $t$. Choosing sufficiently small
$\rho>0$, guarantees that
$\mathcal L(A)<2\mathcal L(Q_1)$ which, in turn, implies that
\[
\frac{g_t(A)}{g_t(F_{Q_2})}\le\frac{4\L(Q_1)^2}{I_t(Q_1)}\frac{I_t(F_{Q_2})}
  {\L(F_{Q_2})^2}\le C\Bigl(\frac{r_1}{r_2}\Bigr)^t,
\]
where $C$ is a constant. Hence,
$G_t(A)\ge g_t(F_{Q_2})\ge C^{-1}(\frac{r_2}{r_1})^tg_t(A)$.
Since the ratio $\frac{r_2}{r_1}$ can be chosen arbitrarily large, we
conclude that even for open sets and for $\sigma:=\L$, the lower bound given
for $\dimH\bE(\x,\bA)$ in \cite{Pe} by means of $g_t$ may be strictly
smaller than
the quantity $s_0(\bA)$ in Theorem~\ref{main} (see \eqref{defs0}).
\end{example}

Next we give an example which shows that if we replace the
assumption that every $A_n$ has positive Lebesgue density by
a weaker assumption
that $\L(A_n\cap B(x,r))>0$ for all $n\in\N$, $x\in A_n$ and $r>0$,
Corollary~\ref{H=G} is not valid  and $\dimH\bE(\x,\bA)$ can be
almost surely strictly smaller than $t_0(\bA)$.

\begin{example}\label{posdensityupper}
Let $\sigma:=\L$ on $\mathbb T^2$ and set $\P:=\prod_{i=1}^\infty \sigma$.
Define
$f(x,y):\mathbb T^2\times\mathbb T^2\to\mathbb T^2$ by $f(x,y)=x+y$
for all $(x,y)\in\mathbb T^2\times\mathbb T^2$.
For every $n\in\N$, let $E_n:=[0,1]\times\{0\}\subset\mathbb T^2$ and
$F_n:=\bigcup_{i=1}^\infty B(y_i,2^{-n-i})\subset\mathbb T^2$ where the
centres $y_i$ are dense in $E_n$. Set $A_n:=E_n\cup F_n$ and write
$\bA:=(A_n)_{n\in\N}$, $\bE:=(E_n)_{n\in\N}$ and
$\bF:=(F_n)_{n\in\N}$. We deduce that
$\L(A_n\cap B(y,r))>0$ for all $r>0$ and $y\in A_n$ but
\[
\liminf_{r\to0}\frac{\mathcal L(A_n\cap B(y,r))}{\mathcal L(B(y,r))}=0
\]
for $\mathcal H^1$-almost all $y\in E_n\setminus F_n$, which follows by applying the Lebesgue density  theorem for $\mathcal H^1|_{E_n}$ and noting that $L(A_n\cap B(y,r))\le 2r \mathcal H^1(B(y,r)\cap E_n\cap F_n)$. Recall that
\[
\limsup_{n\to\infty}(x_n+A_n)=\limsup_{n\to\infty}(x_n+E_n)\cup
  \limsup_{n\to\infty}(x_n+F_n).
\]
Now $\sum_{n=1}^\infty\mathcal H_\infty^1(E_n)=\infty$ and
$\sum_{n=1}^\infty\mathcal H_\infty^t(F_n)<\infty$ for all $t>0$.
Thus $t_0(\bA)=1$ and $t_0(\bF)=0$. By
Corollary~\ref{corollary}, we have
$\dimH\bigl(\limsup_{n\to\infty}(x_n+F_n)\bigr)=0$ for all
$\x\in(\mathbb T^2)^\N$. Furthermore,
$\limsup_{n\to\infty}(x_n+E_n)=\emptyset$ $\P$-almost surely, since
\[
\P\bigl((x_n+E_n)\cap(x_m+E_m)\ne\emptyset\text{ for some }n,m\in\N
\text{ with }
  n\ne m\bigr)=0.
\]
We conclude that $\dimH\bigl(\limsup_{n\to\infty}(x_n+A_n)\bigr)=0<1=t_0(\bA)$
$\P$-almost surely.
Observe that $s_0(\bA)=s_0(\bF)=0$.
\end{example}

Next we construct an example illustrating that if the generating sets $A_n$
do
not have positive Lebesgue density it is possible that
$\dimH\bE(\x,\bA)>s_0(\bA)$
almost surely. For this purpose, we recall the following notation from
\cite{Fal92}.

\begin{definition}\label{Falconer}
For all $0<s\le d$, let
\begin{align*}
\mathcal G^s(\R^d):=\{F\subset\R^d:\,&F\text{ is a }G_\delta\text{-set such that }
  \dimH(\bigcap_{i=1}^\infty f_i(F))\ge s\text{ for all}\\
  &\text{similarities }f_i\colon\R^d\to\R^d, i\in\N\}.
\end{align*}
We say that the sets in the class $\mathcal G^s(\R^d)$ have
large intersection property.
\end{definition}

In \cite[Theorem A]{Fal92}, Falconer showed that $\mathcal G^s(\R^d)$ is the
maximal class of $G_\delta$-sets of Hausdorff dimension at least $s$ which is
closed under countable intersections and similarities. Moreover, in
\cite[Theorem B]{Fal92}, he gave several equivalent ways to define the class
$\mathcal G^s(\R^d)$, one of them being
\begin{equation}\label{netmeasure}
F\in\mathcal G^s(\R^d)\iff\mathcal M_\infty^s(F\cap Q)=\mathcal M_\infty^s(Q)
  \text{ for all dyadic cubes }Q
\end{equation}
where $\mathcal M_\infty^s$ is the $s$-dimensional net content defined as in
\eqref{Hauscontent} with covering sets being dyadic cubes.
Definition \eqref{netmeasure}
was extended by Bugeaud \cite{Bu} and Durand \cite{Du07}
for general gauge functions and open subsets of $\R^d$.

\begin{example}\label{posdensity}
Let $\sigma:=\L$ on $\mathbb T^d$, $\P:=\prod_{i=1}^\infty\sigma$ and define
$f\colon\mathbb T^d\times\mathbb T^d\to\mathbb T^d$ by $f(x,y)=x+y$
for all $(x,y)\in\mathbb T^d\times\mathbb T^d$. Consider
$0<s<t<d$ and choose a sequence $(\widetilde B_i)_{i\in\N}$ of open
balls such that $\dimH\bE(\x,(\widetilde B_i)_{i\in\N})=t$ for
$\P$-almost all $\x\in(\mathbb T^d)^\N$. Fix such
a typical covering set and denote it by $F$. Assume that
$(B_i)_{i\in\N}$ is a sequence of open balls such that
$\dimH\bE(\x,(B_i)_{i\in\N})=s$ for
$\P$-almost all $\x\in(\mathbb T^d)^\N$.
Let $(r_i)_{i\in\N}$ be a decreasing sequence of positive real numbers
which tends to 0 so slowly that
$\bE(\x,(B(0,\frac{r_i}2)_{i\in\N}))=\mathbb T^d$ for $\P$-almost all
$\x\in(\mathbb T^d)^\N$.
(for existence of such
$(r_i)_{i\in\N}$ see \cite{Ka90}). Viewing
$\mathbb T^d=[-\frac 12,\frac 12[^d\subset\R^d$, we define $A_i:=r_iF\cup B_i$
for all $i\in\mathbb N$ and set $\bA:=(A_i)_{i\in\N}$. The fact that
$\dimH F<d$
implies that { $\L(F)=0$ and,  hence,} $G_t(A_i)=G_t(B_i)$ for all $i\in\N$, giving
\[
s=s_0(\bA)=\sup\{0\le u\le d:\sum_{i=1}^\infty G_u(A_i)=\infty\}.
\]
By \cite[Theorem 2]{Du}, we have
$F\in\mathcal G^t(]-\frac 12,\frac 12[^d)$. Let
$\widetilde F$ be the lift of $F$ to $\R^d$ by a covering map. { We claim that $\widetilde F\in\mathcal G^t(\R^d)$. Indeed, to prove this claim, by
\cite[Lemma 10]{Du07}, it is
enough to show that the equality in \eqref{netmeasure} (in which $F$ is replaced by $\widetilde F$) holds for all dyadic cubes $Q$ with small diameter. This is the case,  since $F\in\mathcal G^t(]-\frac 12,\frac 12[^d)$ and $\widetilde F$ is the lift of $F$. }  Since $\mathcal G^t(\R^d)$ is closed under
countable intersections and similarities by \cite[Theorem A]{Fal92}, we obtain
$\widetilde H(\x):=\bigcap_{i=n}^\infty(x_i+r_i\widetilde F)\in\mathcal G^t(\R^d)$
for all $\x\in\mathbb T^d$ and, thus,
$H(\x):=\widetilde H(\x)\cap ]-\frac 12,\frac 12[^d
  \in\mathcal G^t(]-\frac 12,\frac 12[^d)$
by \cite[Proposition 1]{Du07}. Since
$\bE(\x,(B(0,\frac{r_i}2)_{i\in\N}))=\mathbb T^d$ for $\P$-almost all
$\x\in\mathbb T^d$, every point of $\mathbb T^d$ belongs to $B(x_i,\frac{r_i}2)$
for infinitely many $i\in\N$. Using the fact that the sequence $(r_i)_{i\in\N}$
tends to zero, we conclude that
$]-\frac 12,\frac 12[^d\subset\bigcup_{i=n}^\infty B(x_i,\frac{r_i}2)
  \cap ]-\frac 12,\frac 12[^d$
for all $n\in\N$. Combining this with the fact
$\widetilde H(\x)\cap B(x_i,\frac{r_i}2)\cap ]-\frac 12,\frac 12[^d
  \subset x_i+r_iF$
for all $i\ge n$, leads to
$H(\x)\subset\bigcup_{i=n}^\infty(x_i+r_iF)$ for $\P$-almost all
$\x\in\mathbb T^d$. By \cite[Proposition 1]{Du07}, every $G_\delta$-set
containing a subset in $\mathcal G^t(]-\frac 12,\frac 12[^d)$ belongs to
$\mathcal G^t(]-\frac 12,\frac 12[^d)$.
Thus, almost surely
\[
\dimH\bigl(\bigcap_{n=1}^\infty\bigcup_{i=n}^\infty(x_i+r_iF)\bigr)\ge t.
\]
Hence, $\dimH\bE(\x,(r_iF)_{i\in\N})\,,\,\dimH\bE(\x,\bA)\ge t>s_0(\bA)$
for $\P$-almost all $\x\in(\mathbb T^d)^\N$. In particular,
$\dimH\bE(\x,\bA)>s_0(\bA)$ for $\P$-almost all $\x\in(\mathbb T^d)^\N$.
\end{example}

Finally, we give examples which show that Theorem~\ref{main} fails if the
distribution $\sigma$ is singular with respect to the Lebesgue measure.

\begin{example}\label{distribution}
(a) Let $f(x,y)$ be as in Example~\ref{posdensity} and let
$\sigma:=\delta_{x_0}$ for some $x_0\in\mathbb T^d$.
Set $\P=:\prod_{i=1}^\infty\sigma$. Defining
$A_n:=B(0,n^{-\frac 1d})\setminus\{0\}$, we obtain
$\sum_{n=1}^\infty g_d(A_n)=\infty$ and $s_0(\bA)=t_0(\bA)=d$. However,
$\limsup_{n\to\infty}(x_n+A_n)=\emptyset$ $\P$-almost surely.
Thus, Theorem~\ref{main} is not valid.

(b) Let $s<d$ and let $C$ be the regular $2^d$-corner Cantor set on
$\mathbb T^d$ with $\dimH C=\dimp C=s$. Set $\sigma:=\mathcal H^s|_C$ and
assume that everything
else is as in example (a). Then $\bE(\x,\bA)\subset C$ almost surely.
In particular,
\[
\dimH\bE(\x,\bA)\le\dimp\bE(\x,\bA)\le s<s_0(\bA)=t_0(\bA)=d
\]
$\P$-almost surely. Hence, for every $s<d$ there exists
a measure $\sigma$ with ${\dimH\sigma}=s$ for which Theorem~\ref{main} fails.
\end{example}

\section{Further generalizations and  remarks}\label{finalremarks}

\subsection{A weak large intersection property of random covering sets} In  \cite{Du, Pe}, it is proved that, when $\bA=(A_n)_{n\in \N}$ is a sequence of open balls or general open sets on ${\Bbb T}^d$ so that $\sum_{n=1}^\infty g_s(A_n)=\infty$ for some $0<s\leq d$, then
almost surely the random covering set $\bE(\x,\bA)$ has the large intersection
property in the sense that $\bE(\x,\bA)\in\mathcal G^s$ (cf.
Definition~\ref{Falconer}).  We remark that this result also holds under a weaker condition  that  $\sum_{n=1}^\infty G_s(A_n)=\infty$, because one may find open subsets $B_n$ of $A_n$ so that $\sum_{n=1}^\infty g_s(B_n)=\infty$,   according to the following  easily checked fact:
 $$G_s(A)=\sup\{g_s(B): \; B\subset A, \; B\mbox{ is open}\}$$
 whenever $A$ is open.  We  emphasise that, in the above investigation, the assumption of  $A_n$ being open is essential and cannot be dropped, for otherwise     $\bE(\x,\bA)$ may not be  a $G_\delta$-set.

 Nevertheless, in the general setting that the sets in $\bA$ are Lebesgue measurable, we obtain the following  weak large intersection property of random covering sets.

 \begin{theorem}
 Assuming that $\bA$ is a sequence of  Lebesgue measurable sets,  we have under the conditions of Theorem~\ref{main} that 
  \[
\dimH\bigl(\bigcap_{j=1}^\infty\bE(\x^j,\bA)\bigr)\ge s_0(\bA)
\]
for $(\prod_{j=1}^\infty\P)$-almost all $(\x^j)_{j\in\N}\in\prod_{j=1}^\infty U^\N$.
 \end{theorem}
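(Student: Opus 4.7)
The plan is to imitate the inductive block construction from the proof of Proposition~\ref{pro-1.5}, except that at each step the new block is drawn from a different one of the sequences $(\x^j)_{j\in\N}$, with the index $j$ cycling through $\N$ so that every sequence contributes infinitely many blocks. Fix $0<s<s_0(\bA)$. Exactly as in the proof of Theorem~\ref{main}.(b), I would reduce to the setting in which each $A_n(\cdot)$ is compact-valued and satisfies the conditions (C-1) and (C-2), and restrict attention to a compact $E\subset U$ on which $\sigma$ has a Radon--Nikodym derivative with respect to $\L$ satisfying \eqref{e-10}. Under these reductions, Proposition~\ref{assumptionOK} together with Lemma~\ref{lem-6} yields, for each $j\in\N$ and each compact $F\subset E$, that
\[
\Gamma_s\Bigl(F\cap\bigcup_{i=n}^{\infty}A_i(x^j_i)\Bigr)=\Gamma_s(F)\quad\text{for all }n\in\N
\]
holds for $\P$-almost every $\x^j$.

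Fix $\ell>2$ and enumerate $\N\times\N$ as $((j_n,l_n))_{n\in\N}$ so that for each $j$ the set $\{n:j_n=j\}$ is infinite and $l_n$ takes along it the values $1,2,3,\ldots$ in order. Set $F_0:=E$ and $M_{j,0}:=0$ for all $j$. Inductively, assume that integers $M_{j',l'}$ have been fixed for all $(j',l')$ preceding $(j_n,l_n)$, together with a Borel set $\Omega_{n-1}\subset\prod_jU^{\N}$ of $(\prod_j\P)$-measure at least $1-\sum_{m<n}\ell^{-m}$ on which the compact set
\[
F_{n-1}:=E\cap\bigcap_{m<n}\bigcup_{i=M_{j_m,l_m-1}+1}^{M_{j_m,l_m}}A_i(x^{j_m}_i)
\]
satisfies $\Gamma_s(F_{n-1})<\prod_{m=1}^{n-1}(1+\ell^{-m})\Gamma_s(E)$. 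Conditioning on the coordinates processed in the first $n-1$ steps (so that $F_{n-1}$ is a specific compact subset of $E$) and applying the previous display with $F=F_{n-1}$ to the sequence $\x^{j_n}$ starting from index $M_{j_n,l_n-1}+1$ gives
\[
\Gamma_s\Bigl(F_{n-1}\cap\bigcup_{i=M_{j_n,l_n-1}+1}^{\infty}A_i(x^{j_n}_i)\Bigr)=\Gamma_s(F_{n-1})
\]
for $\P$-a.e.\ $\x^{j_n}$. Lemma~\ref{lem-1.2}.(v) and Lemma~\ref{lem-4}, combined with Fubini, then produce an integer $M_{j_n,l_n}>M_{j_n,l_n-1}$ and a Borel set $\Omega_n\subset\Omega_{n-1}$ with $(\prod_j\P)(\Omega_n)\ge(\prod_j\P)(\Omega_{n-1})-\ell^{-n}$ on which the truncated set $F_n:=F_{n-1}\cap\bigcup_{i=M_{j_n,l_n-1}+1}^{M_{j_n,l_n}}A_i(x^{j_n}_i)$ is compact and satisfies $\Gamma_s(F_n)<\prod_{m=1}^{n}(1+\ell^{-m})\Gamma_s(E)$.

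Setting $\Omega:=\bigcap_n\Omega_n$ gives $(\prod_j\P)(\Omega)\ge(\ell-2)/(\ell-1)$, and on $\Omega$ the nested compact sets $(F_n)$ have uniformly bounded minimal regular $s$-energy, so Lemma~\ref{lem-1.0} yields $\dimH\bigcap_nF_n\ge s$. Since for each fixed $j$ the endpoints $(M_{j,l})_{l\in\N}$ are strictly increasing (hence tend to infinity), every point of $\bigcap_nF_n$ lies in $A_i(x^j_i)$ for infinitely many $i$; consequently $\bigcap_nF_n\subset\bE(\x^j,\bA)$ for every $j$, and so $\dimH\bigcap_j\bE(\x^j,\bA)\ge s$ on $\Omega$. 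Letting $\ell\to\infty$ along a countable sequence (and then $s\nearrow s_0(\bA)$) upgrades this to a $(\prod_j\P)$-almost sure statement. The main technical obstacle is that the compact set $F_{n-1}$ used at the $n$-th application of \eqref{e-5} is random, depending on the sequences processed in the previous steps, whereas \eqref{e-5} is stated only for a fixed compact $F$; this is resolved by the Fubini argument above, together with the joint Borel measurability of the relevant maps provided by Lemma~\ref{lem-4}.
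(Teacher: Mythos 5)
Your proof is correct and is essentially the paper's own argument: the paper likewise proves this by rerunning the block construction of Proposition~\ref{pro-1.5}, performing the $n$-th step with the variable $\x^{\varphi(n)_1}$ for a diagonal bijection $\varphi\colon\N\to\N\times\N$, so that every sequence $\x^j$ contributes infinitely many blocks and hence $\bigcap_nF_n\subset\bigcap_{j}\bE(\x^j,\bA)$. Your Fubini treatment of the random compact set $F_{n-1}$ is exactly how the paper handles the same issue inside the proof of Proposition~\ref{pro-1.5} itself.
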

\begin{proof}
This can be verified by modifying the proof of Proposition~\ref{pro-1.5}
in the
following manner: Let $\varphi:\N\to\N\times\N$ be a bijection obtained using
the diagonal method. Repeat the construction of Proposition~\ref{pro-1.5} such
that the $n$-th construction step is done using the variable $\x^{\varphi(n)_1}$,
where $\varphi(n)_1$ is the first coordinate of $\varphi(n)$.
This leads to the conclusion
\[
\dimH\bigl(\bigcap_{n=1}^\infty\bigcup_{i=N_{n-1}+1}^{N_n}A_i(x_i^{\varphi(n)_1})\bigr)
  \ge s
\]
(cf. \eqref{midresult}), which implies the desired result.
\end{proof}

\subsection{Hausdorff measure of random covering sets} Let $d\in \N$. Denote by $\mathcal G$ the collection of
functions $h\colon [0,\infty[\to[0,\infty[$ such that
$h$ is increasing, positive near $0$, $\lim_{r\to 0}h(r)=h(0)=0$, and  $h(r)r^{-d}$ is
decreasing. Any element of $\mathcal G$ is called a gauge function. For $F\subset \R^d$ and $h\in \mathcal G$, we use $\mathcal H^h(F)$ and $\mathcal H^h_\infty(F)$ to denote the Hausdorff measure and Hausdorff content of $F$ with respect to the gauge function $h$ (cf. \cite{Car, Rog}).  For instance,  $\mathcal H^h_\infty(F)$ is defined by replacing
$({\rm diam} F_n)^s$ by $h({\rm diam} F_n)$ in the definition \eqref{Hauscontent}.

In \cite{Du}, Durand  studied the Hausdorff measures of random covering sets on ${\Bbb T}^d$ when $\bA$ is a sequence of balls of the form $A_n=B(0, r_n)$.
Using the mass transference principle established in  \cite{BV}, he
showed that for any $h\in \mathcal G$ with $\lim_{r\to 0}h(r)r^{-d}=\infty$, almost surely
$$
\mathcal H^h(\bE(\x,\bA))=\left\{\begin{array}{ll}
\infty &\mbox{ if} \quad \sum_{n=1}^\infty h(r_n)=\infty,\\
0 & \mbox{ otherwise}.
\end{array}
\right.
$$
However,  this  approach does not extend to the general case when the sets in $\bA$ are not ball-like, since the mass transference principle may fail in such situation.

To deal with the general case, let us introduce some notation. For   a Lebesgue measurable set $F\subset\R^d$ with $\L(F)>0$ and $h\in \mathcal G$, we define the $h$-energy of $F$ by
\[
I_h(F):=\iint_{F\times F}h(\vert x-y\vert)^{-1}\,d\mathcal L(x)d\mathcal L(y).
\]
Set $g_h(F):=\L(F)^2I_h(F)^{-1}$ and use $g_h$ to define $G_h(F)$ as in
\eqref{defG}. Following the argument in the proof of Lemma~{\ref{HgeG} with routine changes, we can show that
\begin{equation}
\label{HgeG1}
\mathcal H^h_\infty(F)\geq G_h(F).
\end{equation}

As a generalisation of  Theorem~\ref{main}, we have the following result on the Hausdorff measures of general random covering sets.
\begin{theorem}
\label{Gauge-extension}
Let $h\in \mathcal G$. Under the assumptions of Theorem~\ref{main}, we have
\begin{itemize}
\item[(i)] $\sum_{n=1}^\infty \mathcal H^h_\infty(A_n)<\infty \Longrightarrow \mathcal H^h(\bE(\x,\bA))=0$.
\item[(ii)] $\sum_{n=1}^\infty G_h(A_n)=\infty\implies\mathcal H^h(\bE(\x,\bA))=\infty$ for $\P$-almost all $\x\in U^\N$,  provided that $I_h(B(0,R))<\infty$ for all
$R>0$ and $A_n$ are Lebesgue measurable.
\item[(iii)] Assume that  $r\mapsto h(r)r^{-d+\e}$ is decreasing for some $\varepsilon>0$ and,  moreover, assume that $\tilde h\in\mathcal G$ is such that  the inequality $\tilde h(r)\le h(r)^{1+\delta}$ is
valid for some $\delta>0$ and all $r>0$. Then  $$\sum_{n=1}^\infty G_h(A_n)<\infty\implies\sum_{n=1}^\infty \mathcal H^{\tilde{h}}_\infty(A_n)<\infty,$$ provided that $A_n$ are  Lebesgue measurable  with positive Lebesgue density.
\end{itemize}
\end{theorem}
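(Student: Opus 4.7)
The plan is to run, for each of the three parts, the gauge-function analogue of the argument already used in the paper for the power kernel $|x-y|^{-s}$. Throughout, I would work with the natural gauge extensions: for a Borel set $F\subset\R^d$ and a Radon measure $\mu$, set $I_h(\mu):=\iint h(|x-y|)^{-1}\,d\mu(x)d\mu(y)$, $I_h(F):=I_h(\L|_F)$, $g_h(F):=\L(F)^2/I_h(F)$, and define $G_h(F)$ and $\Gamma_h(F)$ by replacing $|x-y|^{-s}$ with $h(|x-y|)^{-1}$ in~\eqref{defG} and in the definition preceding Lemma~\ref{lem-1.2}. The monotonicity hypothesis that $h(r)r^{-d}$ is decreasing yields the basic doubling estimate $h(Cr)\le C^dh(r)$ for $C\ge 1$, which will be used systematically.

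For (i), the plan is to repeat the proof of Lemma~\ref{H_inftyupperbound} verbatim with $(\diam F_n)^t$ replaced by $h(\diam F_n)$; the diffeomorphism bound $\mathcal H_\infty^h(f(x,E))\le C_u^d\mathcal H_\infty^h(E)$ is immediate from the $C_u$-Lipschitz property of $f(x,\cdot)$ together with doubling. For (ii), I would transcribe Sections~\ref{regularenergy}--\ref{lowerboundsection} with $|x-y|^{-s}$ replaced by $h(|x-y|)^{-1}$. Lower semicontinuity of $I_h$ (Lemma~\ref{lemma-1}) is automatic because the kernel is non-negative and l.s.c.; the finiteness and continuity properties of $\Gamma_h$ (Lemma~\ref{lem-1.2}) rely precisely on the assumption $I_h(B(0,R))<\infty$; and the local energy comparisons in Lemmas~\ref{lem-Leb}, \ref{lemma-5}, and~\ref{intestimate} survive because on cubes of comparable size the kernel $h(|x-y|)^{-1}$ varies by a bounded factor thanks to doubling. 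The resulting $h$-version of Proposition~\ref{pro-1.5} yields, for every compact $E\subset U$ with $\L(E)>0$ and $\P$-almost every $\x$, the content bound $\mathcal H_\infty^h(\bE(\x,\bA)\cap E)\ge\Gamma_h(E)^{-1}$. Since the whole construction is local, applying it inside an arbitrary dyadic cube $Q$ meeting $U$ and using the estimate $\Gamma_h(Q)\asymp h(\diam Q)^{-1}$ produces a uniform density-type bound $\mathcal H_\infty^h(\bE(\x,\bA)\cap Q)\ge c\,h(\diam Q)$. A standard net-measure argument of Rogers--Taylor type then promotes this to $\mathcal H^h(\bE(\x,\bA))=\infty$.

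For (iii), the plan is to mirror Lemma~\ref{HleG} applied individually to each $A_n$. Replacing $A_n$ by a Borel set of the same $\mathcal H_\infty^{\tilde h}$-content and with $A_n^+=A_n$ by means of Lemma~\ref{lemma-Borel}, Frostman's lemma for the gauge $\tilde h$ produces a Radon measure $\mu$ supported on $A_n$ with $\mu(B(x,r))\le\tilde h(r)$ and $\mu(A_n)\ge c\,\mathcal H_\infty^{\tilde h}(A_n)$. The layer-cake computation of Lemma~\ref{HleG}, run with $a:=\mu(A_n)^{-1/(1+\delta)}$ and using $\tilde h(r)\le h(r)^{1+\delta}$, then yields
\[
I_h(\mu)\le C\mu(A_n),
\]
where $C$ depends only on $h$, $\tilde h$, $\delta$, and $\diam\Delta$. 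The $h$-version of Proposition~\ref{thm:energy_lemma} next approximates $\mu$ by uniform measures $\mu_k=\mu(A_n)\L(F_k)^{-1}\L|_{F_k}$ with $F_k\subset A_n$ and $I_h(\mu_k)\to I_h(\mu)$, so that for large $k$
\[
\mu(A_n)\le 2C\,\frac{\L(F_k)^2}{I_h(F_k)}\le 2C\,G_h(A_n).
\]
Combined with $\mu(A_n)\ge c\,\mathcal H_\infty^{\tilde h}(A_n)$ and summed over $n$, this gives the implication in~(iii).

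The main obstacle will be the gauge version of Proposition~\ref{thm:energy_lemma}, and especially of its core Lemma~\ref{lemma-5}. In the power-law case one relies crucially on the exact scaling identity~\eqref{changevar}, $I_s(B(x,r))=\alpha^{-2}\beta\L(B(x,r))^2r^{-s}$, to match the self-energies of the approximating dyadic cubes with their $\mu$-masses. For a general $h$ this must be replaced by a two-sided asymptotic $I_h(B(x,r))\asymp\L(B(x,r))^2h(r)^{-1}$, which is precisely why the strengthened hypothesis ``$r\mapsto h(r)r^{-d+\varepsilon}$ decreasing'' appears in~(iii): it ensures both the integrability of $\int_0^r h(t)^{-1}t^{d-1}\,dt$ and its comparability with $r^dh(r)^{-1}$, making all the energy bookkeeping in the gauge analogues of Lemmas~\ref{lemma-5} and~\ref{lem-2.2} go through. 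The analogous role of $I_h(B(0,R))<\infty$ in~(ii) is to guarantee $\Gamma_h(E)<\infty$ for bounded $E$ and thus to enable the gauge version of Proposition~\ref{assumptionOK}.
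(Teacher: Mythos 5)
Parts (i) and (iii) of your proposal follow the paper's own route. For (i) the paper indeed just reruns Lemma~\ref{H_inftyupperbound} with $h(\diam F_n)$ in place of $(\diam F_n)^t$, and for (iii) it reruns Lemma~\ref{HleG} (Frostman for the gauge $\tilde h$, the layer-cake estimate with $a=\mu(E)^{-1/(1+\delta)}$, then the gauge version of Proposition~\ref{thm:energy_lemma}); you correctly identify that the hypothesis ``$r\mapsto h(r)r^{-d+\e}$ decreasing'' is exactly what replaces the scaling identity \eqref{changevar} in the estimate of the term (II) in Lemma~\ref{lemma-5}. One caveat on your justification of the energy comparisons in (ii): plain doubling ($h(2r)\le ch(r)$ with a possibly large $c$) is \emph{not} enough for the $(1+\e)$-accurate comparisons in Lemmas~\ref{intestimate} and~\ref{lemma-5}; what is needed is the near-multiplicativity $h(r)\le(1+O(\e))h((1-\e)r)$ of \eqref{epsilonsmooth}, which the paper secures by passing to an equivalent interpolated gauge (and which in fact follows from $h(r)r^{-d}$ being decreasing).

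The genuine gap is in the last step of (ii), where you pass from a content bound to $\mathcal H^h(\bE(\x,\bA))=\infty$. Your claimed uniform bound $\mathcal H^h_\infty(\bE(\x,\bA)\cap Q)\ge c\,h(\diam Q)$ for all dyadic cubes $Q$ meeting $U$ fails for two reasons. First, the lower-bound machinery only applies on a fixed compact set $E$ where $\sigma|_E\ll\L$ with bounded density ratio; cubes carrying little or none of this absolutely continuous part may miss $\bE(\x,\bA)$ entirely (e.g.\ when $\sigma$ is compactly supported inside $U$), so no such bound can hold for \emph{arbitrary} cubes. Second, even on good cubes the quantity Proposition~\ref{pro-1.5} delivers is $\Gamma_h(E\cap\overline Q)^{-1}$, which is of the order of $G_h$ rather than of $\mathcal H^h_\infty$; as the final remark of the paper shows, for $h(r)=r^d(\log r)^2$ one has $G_h(B(x,r))\asymp h(r)|\log r|^{-1}\ll h(r)\asymp\mathcal H^h_\infty(B(x,r))$, so $\Gamma_h(Q)\asymp h(\diam Q)^{-1}$ is simply false for general gauges. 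A corrected cube-counting argument could probably be salvaged (via Cauchy--Schwarz over the cubes of each generation and the fact that $I_h(B(0,R))<\infty$ forces $h(r)r^{-d}\to\infty$), but you would have to supply it. The paper avoids all of this with a much shorter device: choose an auxiliary gauge $h'$ with $\lim_{r\to0}h'(r)/h(r)=0$ and still $\sum_{n=1}^\infty G_{h'}(A_n)=\infty$; the content argument then gives $\mathcal H^{h'}(\bE(\x,\bA))>0$, which immediately forces $\mathcal H^{h}(\bE(\x,\bA))=\infty$.
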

\begin{proof}
Statement (i) follows from  a routine modification of the proof of Lemma~\ref{H_inftyupperbound}. Statement (ii) follows from the proof of Theorem~\ref{main}.(b) with slight modifications. Indeed,
in the proof of Theorem~\ref{main}.(b), the only
place where the fact that the kernel is $\vert x\vert^{-s}$ is needed
is inequality \eqref{bigdistance} (see the proof of
Lemma~\ref{intestimate}). To extend that inequality associated to $h$,  it is enough to have that
\begin{equation}\label{epsilonsmooth}
h(r)\le\bigl(1+O(\e)\bigr)h((1-\e)r)\text{ for all }0<r<2R.
\end{equation}
Note that  $h$ is doubling in the sense that $h(2r)<ch(r)$ for some constant $c>1$, which follows from the fact that $h(r)r^{-d}$ is decreasing.  Hence,  the gauge function $\tilde h$ obtained from
$h$ as the linear interpolation of $h$ at points $2^{-n}$, $n\in\N$, is
equivalent with $h$ and satisfies \eqref{epsilonsmooth}. Now
Proposition~\ref{pro-1.5} implies that $\mathcal H^h(\bE(\x,\bA))>0$
$\P$-almost surely. It is not difficult to see that if
$\sum_{n=1}^\infty G_h(A_n)=\infty$ there exists a gauge function $h'$ such that
$\lim_{r\to 0}h'(r)h(r)^{-1}=0$ and $\sum_{n=1}^\infty G_{h'}(A_n)=\infty$.
Therefore, $\mathcal H^{h'}(\bE(\x,\bA))>0$
which implies $\mathcal H^h(\bE(\x,\bA))=\infty$.

The proof of  (iii) is essentially identical to that of  Lemma~\ref{HleG}.
Observe that one may assume that $\mathcal H^{\tilde h}(B(0,R))>0$ for some
$R>0$ since otherwise the claim is trivial. The assumption that
$h(r)r^{-d+\e}$ is decreasing is needed at the end of the proof of
Lemma~\ref{lemma-5} when the term (II) is estimated.
(Recall that Lemma~\ref{lemma-5} is needed in the proof of
Proposition~\ref{thm:energy_lemma}). Observe that heuristically
$\mathcal H^{\tilde h}(B(0,R))>0$ means that $\tilde h(r)$ should be larger
than $r^d$ for small $r>0$ and, therefore, $h(r)$ should be larger than
$r^{\frac d{1+\delta}}$ for small $r>0$.
\end{proof}

\begin{remark}
{\rm One may expect that for some $R>0$ there exists a constant $C>0$ such that
for all Lebesgue measurable sets $F\subset B(0,R)$,
\begin{equation}\label{InHG}
\mathcal H_\infty^h(F)\le CG_h(F).
\end{equation}
If so,  the condition $\sum_{n=1}^\infty G_h(A_n)=\infty$ in Theorem~\ref{Gauge-extension}.(ii) can be replaced by
$$\sum_{n=1}^\infty {\mathcal H}_\infty^h(A_n)=\infty.$$
 However, \eqref{InHG} does not hold  for general
doubling gauge functions even in the case where $F$ is a ball. Indeed, let
$h(r)=r^d(\log r)^2$ for all $0<r<r_0$ where $r_0$ is chosen such that $h$ is
increasing. A straightforward calculation implies that $I_h(B(x,r))$ is
comparable to $(r^d|\log r|)^{-1}$. Applying \cite[Theorem 1.15]{Mat} to
product measures, making a discrete approximation and using the fact that the
sum $\sum_{i=1}^n a_i^2$ is minimised for the uniform probability vector
$(a_1,\dots,a_n)$, it is not difficult to see that $g_h(B(x,r))$ is comparable
to $G_h(B(x,r))$. Therefore, $G_h(B(x,r))$ is comparable to $h(r)|\log r|^{-1}$
while $\mathcal H_\infty^h(B(x,r))$ is comparable to $h(r)$.
}
\end{remark}

\begin{remark}
{\rm Here  we indicate how $G_h(F)$ can be calculated for some concrete examples.
Assume that $F=B(x,r)$. It follows immediately from the definition that
$G_h(F)\le h(2r)$. If $h(r)r^{-d+\e}$ is decreasing for some $\e>0$ (thus $h$ is
doubling), one easily sees that $I_h(F)\le C r^{2d}h(r)^{-1}$ for some constant
$C>0$. Therefore, $G_h(F)$ is comparable to $h(r)$. Another easily calculable
example is when $F$ is a rectangle (or parallelepiped in higher dimensions) with side lengths
$a\ge b$. Then $G_s(F)$ is comparable to $a^s$ for $0<s<1$ and to $ab^{s-1}$ for
$1<s<2$.}
\end{remark}

\begin{remark}
{\rm Basing on the above remark, one can verify that \eqref{InHG} holds in the following particular cases: (i) $F$ is a ball and $h$ is a gauge function so that $r\mapsto h(r)r^{-d+\e}$ is decreasing for some $\epsilon>0$; (ii) $F$ is a rectangle, and $h(r)=r^s$ for some non-integer $s\in (0,2)$.
}
\end{remark}

\subsection{A question on the measurability of level sets of random covering sets.}
It is a natural question whether  $\dim\bE(\x,\bA)$ takes a constant value almost surely in the general setting that $\bA$ is a sequence of Lebesgue measurable sets, where $\dim$ is either the Hausdorff, packing
or box counting dimension.
It is obvious that $\dim\bE(\x,\bA)$ does not depend on a
finite number of coordinates $x_i$. Therefore,
$$F_s:=\{\x\in U^\N : \dim\bE(\x,\bA)=s\}$$ is a tail event for every
$0\le s\le d$,  provided that $F_s$ is measurable. In this case, the
Kolmogorov's zero-one law would imply  that $\x\mapsto\dim\bE(\x,\bA)$
is almost surely a constant. Theorem~\ref{main} gives the value of this
constant under further assumptions on $\bA$.

 Using the results of Dellacherie \cite{Del} and Mattila and Mauldin
\cite{MM}, it is easy to see that $F_s$
is measurable with respect to the $\sigma$-algebra
generated by analytic sets provided that the sets $A_n$ are analytic for all
$n\in\N$ (for details see \cite{JJKLSX}). For Lebesgue measurable generating
sets $(A_n)_{n\in \N}$, we do not know whether the sets $F_s$ are measurable
or not.

\end{document}